\documentclass[12pt]{amsart}
\usepackage{amscd,amsmath,amsthm,amssymb}
\usepackage{tikz}
%
%
\tikzstyle{punkt}=[circle, fill=black, minimum size=1mm,inner sep=0pt, draw]

%
%
%
\def\NZQ{\mathbb}               

\def\ZZ{{\NZQ Z}}

\def\FF{{\NZQ F}}
\def\GG{{\NZQ G}}

\def\TT{{\NZQ T}}

%
%

%

\def\G{{\mathcal G}}

%
\def\ab{{\mathbf a}}

\def\xb{{\mathbf x}}

\def\opn#1#2{\def#1{\operatorname{#2}}} 
%
\opn\chara{char} \opn\length{\ell} \opn\pd{pd} \opn\rk{rk}
\opn\projdim{proj\,dim} \opn\injdim{inj\,dim} \opn\rank{rank}
\opn\depth{depth} \opn\grade{grade} \opn\height{height}
\opn\embdim{emb\,dim} \opn\codim{codim}

\opn\Tr{Tr} \opn\bigrank{big\,rank}
\opn\superheight{superheight}\opn\lcm{lcm}
\opn\trdeg{tr\,deg}
\opn\reg{reg} \opn\lreg{lreg} \opn\ini{in} \opn\lpd{lpd}
\opn\size{size} \opn\sdepth{sdepth}
\opn\link{link}\opn\fdepth{fdepth}\opn\lex{lex}
\opn\tr{tr}
\opn\type{type}
\opn\gap{gap}
\opn\arithdeg{arith-deg}
\opn\astab{astab}
\opn\dstab{dstab}
\opn\bigheight{bigheight}
%
\opn\div{div} \opn\Div{Div} \opn\cl{cl} \opn\Cl{Cl}
%
%
\opn\Spec{Spec} \opn\Supp{Supp} \opn\supp{supp} \opn\Sing{Sing}
\opn\Ass{Ass} \opn\Min{Min}\opn\Mon{Mon}
%
%
\opn\Ann{Ann} \opn\Rad{Rad} \opn\Soc{Soc}
%
%
\opn\Im{Im} \opn\Ker{Ker} \opn\Coker{Coker} \opn\Am{Am}
\opn\Hom{Hom} \opn\Tor{Tor} \opn\Ext{Ext} \opn\End{End}
\opn\Aut{Aut} \opn\id{id}

\opn\nat{nat}
\opn\pff{pf}
\opn\Pf{Pf} \opn\GL{GL} \opn\SL{SL} \opn\mod{mod} \opn\ord{ord}
\opn\Gin{Gin} \opn\Hilb{Hilb}\opn\sort{sort}
\opn\PF{PF}\opn\Ap{Ap}
\opn\mult{mult}
%
%
\opn\aff{aff}
\opn\relint{relint} \opn\st{st}
\opn\lk{lk} \opn\cn{cn} \opn\core{core} \opn\vol{vol}  \opn\inp{inp} \opn\nilpot{nilpot}
\opn\link{link} \opn\star{star}\opn\lex{lex}\opn\set{set}
\opn\width{wd}
\opn\Fr{F}
\opn\QF{QF}
\opn\G{G}
\opn\type{type}\opn\res{res}
\opn\conv{conv}
\opn\gr{gr}

%
%

\def\pot#1#2{#1[\kern-0.28ex[#2]\kern-0.28ex]}

%
%
\opn\dirlim{\underrightarrow{\lim}}
\opn\inivlim{\underleftarrow{\lim}}
%
%
%

\let\sect=\cap
\let\dirsum=\oplus
\let\tensor=\otimes

\let\Dirsum=\bigoplus

%
%
\let\to=\rightarrow

\def\Implies{\ifmmode\Longrightarrow \else
	\unskip${}\Longrightarrow{}$\ignorespaces\fi}
\def\implies{\ifmmode\Rightarrow \else
	\unskip${}\Rightarrow{}$\ignorespaces\fi}
\def\iff{\ifmmode\Longleftrightarrow \else
	\unskip${}\Longleftrightarrow{}$\ignorespaces\fi}

\let\:=\colon
\newtheorem{Theorem}{Theorem}[section]
\newtheorem{Lemma}[Theorem]{Lemma}
\newtheorem{Corollary}[Theorem]{Corollary}
\newtheorem{Proposition}[Theorem]{Proposition}
\newtheorem{Remark}[Theorem]{Remark}

\newtheorem{Question}[Theorem]{Question}
%
%
\let\epsilon\varepsilon
\let\kappa=\varkappa
%
%
\textwidth=15cm \textheight=22cm \topmargin=0.5cm
\oddsidemargin=0.5cm \evensidemargin=0.5cm \pagestyle{plain}
%
%
\def\qed{\ifhmode\textqed\fi
	\ifmmode\ifinner\quad\qedsymbol\else\dispqed\fi\fi}
\def\textqed{\unskip\nobreak\penalty50
	\hskip2em\hbox{}\nobreak\hfil\qedsymbol
	\parfillskip=0pt \finalhyphendemerits=0}
\def\dispqed{\rlap{\qquad\qedsymbol}}

%
\opn\dis{dis}
\def\pnt{{\raise0.5mm\hbox{\large\bf.}}}

\opn\Lex{Lex}




\begin{document}
	
	\title {Matchings and squarefree powers of edge ideals}

	\author {Nursel Erey, J\"urgen Herzog, Takayuki Hibi  and Sara Saeedi Madani}

	\address{Nusel Erey, Gebze Technical University, Department of Mathematics, 41400 Kocaeli, Turkey}
	\email{nurselerey@gtu.edu.tr}

	\address{J\"urgen Herzog, Fachbereich Mathematik, Universit\"at Duisburg-Essen, Campus Essen, 45117
		Essen, Germany}
	\email{juergen.herzog@uni-essen.de}

	\address{Takayuki Hibi, Department of Pure and Applied Mathematics, Graduate School of Information Science and Technology, Osaka University, Suita, Osaka 565--0871, Japan}
	\email{hibi@math.sci.osaka-u.ac.jp}

	\address{Sara Saeedi Madani, Faculty of Mathematics and Computer Science, Amirkabir University of Technology (Tehran Polytechnic), Tehran, Iran, and School of Mathematics, Institute for Research in Fundamental Sciences (IPM), Tehran, Iran}
	\email{sarasaeedi@aut.ac.ir}

	\dedicatory{ }
	
	\begin{abstract}
		Squarefree  powers of edge ideals are intimately related to matchings of the underlying graph. In this paper we give bounds for the regularity of squarefree powers of edge ideals, and we consider the question of when such powers are linearly related or have linear resolution. We also consider the so-called squarefree Ratliff property. 
	\end{abstract}
	
	\thanks{}
	
	\subjclass[2010]{05E40, 13D02, 05C70}
	
	
	
	\keywords{Matchings, edge ideals, squarefree powers}
	
	\maketitle
	
	\setcounter{tocdepth}{1}
	
	\section*{Introduction}
	
	The study of regularity of edge ideals arising from finite simple graphs and of their powers  is one of the current trends of commutative algebra.  In fact, in the last decade a huge number of papers on this topic  has been published.  In the present paper, instead of the ordinary powers of edge ideals, their {\em squarefree powers} will be systematically discussed.  Our study on squarefree powers of edge ideals is closely connected with the classical theory of matchings of finite simple graphs. This is part of the motivation to study squarefree powers.
	
	Let $G$ be a finite simple graph on $[n] = \{1, \ldots, n\}$ with $E(G)$ its edge set.  Recall that a finite graph $G$ is {\em simple} if $G$ admits no loop and no multiple edge.  Let $K$ be a field and $S = K[x_1, \ldots, x_n]$ the polynomial ring in $n$ variables over $K$.  We associate each edge $e = \{i, j\}$ of $G$ with the monomial $x_ix_j$ of $S$. We usually identify the edge $e$ and its corresponding monomial in $S$. The {\em edge ideal} of $G$ is the monomial ideal $I(G)$ of $S$ which is generated  by those monomials $e=x_ix_j$ with $e \in E(G)$.
	
	It was shown by Fr\"oberg \cite{F} that $I(G)$ has linear resolution if and only if the complementary graph $\overline{G}$ of $G$ is a chordal graph, where $\overline{G}$ is the graph with vertex set $[n]$ and edge set $\{ \{ i, j \} : \{ i, j \} \not\in E(G)\}$. Recall that a chordal graph is a finite simple graph in which any cycle of length greater than~4 has a chord. Furthermore, by virtue of Dirac's theorem on chordal graphs together with the modern theory of Gr\"obner bases, it was proved in \cite{HHZ2} that all powers of $I(G)$ have linear resolution if and only if $\overline{G}$ is chordal.
	
	One of the recent targets is to classify those graphs $G$ for which the second power $I(G)^2$ of $I(G)$ has linear resolution.  A finite simple graph $G$ is called {\em gap-free} if, for edges $e$ and $e'$ of $G$ with $e \cap e' = \emptyset$, there is $e'' \in E(G)$ with $e \cap e'' \neq \emptyset$ and $e' \cap e'' \neq \emptyset$.  It follows that if $I(G)^2$ has linear resolution, then $G$ is gap-free.  Its converse turned out to be false (Nevo and Peeva \cite{NP}).  On the other hand, several sufficient conditions are known (\cite{B, E1, E2}).  Nowadays, one of the reasonable conjectures is that $I(G)^k$ has linear resolution for all  $k \gg 0$ if and only if $G$ is gap-free.
	
	Now, apart from the subject of (ordinary) powers, we turn to the description of the present paper.  
	Let $I$ be a squarefree monomial ideal in $S$. For any $k\geq 1$ we denote by the $I^{[k]}$ the $k$th \emph{squarefree power} of $I$ which is defined to be the squarefree monomial ideal generated by the squarefree monomials in $I^k$. Note that $I^{[k]}=(0)$ for
	$k\gg 0$. Indeed, if we consider $I$ to be the edge ideal of a hypergraph, then the highest non-vanishing squarefree power of $I$ coincides with the so-called packing number of the hypergraph.
	
	In this paper we are interested in squarefree powers of edge ideals of graphs. A finite set of edges $M = \{e_1, \ldots, e_k\}$ of $G$ is called a {\em matching} of $G$ if $e_i \cap e_j = \emptyset$ for $1 \leq i < j \leq n$.  If a matching $M$ consists of $k$ edges, then $M$ is called a \emph{$k$-matching}.  The {\em matching number} of $G$, denoted by $\nu(G)$, is the maximum cardinality of matchings of $G$.  Given a matching $M = \{e_1, \ldots, e_k\}$, we write $u_M$ for the squarefree monomial $\prod_{i=1}^{k}e_i$ of $S$.  The squarefree $k$th power of the edge ideal $I(G)$ is 
	\[
	I(G)^{[k]} = ( u_M : \text{$M$ is a $k$-matching of $G$} )
	\]
	and $I(G)^{[k]} =(0)$ for $k > \nu(G)$.  Thus, in particular, $I(G)$ itself is the squarefree first power of $I(G)$. In \cite[Theorem~5.1]{BHZ} it was shown that  the last non-vanishing squarefree power of $I(G)$, namely $I(G)^{[\nu(G)]}$,  has linear quotients. This shows that squarefree powers of ideals have a quite different behaviour than ordinary powers. Another such instance is the behaviour of the function $g(k)=\mu(I(G)^{[k]})$.  Here $\mu(I)$ denoted the minimal number of generators of an ideal.  While the function $f(k)=\mu(I(G)^k)$  is a strictly increasing function, unless $G$ consists of only one edge, the function $g(k)$ in the range $1,\ldots,\nu(G)$ is unimodal, see \cite{HL1}.

	Following \cite{BHZ}, in the present paper, we found the basis for the study of squarefree powers of edge ideals.
	
	We sketch out this paper.  First, the role of Section $1$ is to fix notation and to supply fundamental results which will be required in the sequel.  Especially, in addition to the matching number $\nu(G)$ of a finite simple graph $G$, the {\em induced matching number} $\nu_1(G)$ together with the {\em restricted matching number} $\nu_0(G)$ of $G$ are introduced.  These three invariants of finite simple graphs play important roles in the study of squarefree powers of edge ideals.  
	
	Section $2$ is devoted to the study on bounds for regularity of squarefree powers of edge ideals.  It is known (\cite{BBH, BHT}) that
	\[
	2k + \nu_1(G) - 1 \leq \reg(I(G)^k) \leq 2k + \nu(G) - 1
	\]
	for all $k \geq 1$.  One cannot escape from the temptation to find the squarefree analogue of the above inequality.  In this paper, we prove that
	\[
	\reg(I(G)^{[k]}) \geq 2k + \nu_1(G) - k
	\]
	for all $1 \leq k \leq \nu_1(G)$.  On the other hand, in consideration of the fact that $I(G)^{[\nu(G)]}$ has linear quotients, it is reasonable to expect the inequalities
	\[
	\reg(I(G)^{[k]}) \leq 2k + \nu(G) - k
	\]
	for all $1 \leq k \leq \nu(G)$.  So far, the proposed upper bound is unknown except for $k = 1$ and $k = \nu(G)$.  We also prove this upper bound  for $k = 2$.
	
	The topic of Section $3$ is linearly related squarefree powers of edge ideals.  It is shown that if $I(G)^{[k]}$ is linearly related, then $I(G)^{[k+1]}$ is linearly related.  It then follows that there exists a smallest integer $\lambda(I(G))$ for which $I(G)^{[k]}$ is linearly related for all $k \geq \lambda(I(G))$.  It is known \cite[Lemma 5.2]{BHZ} that $\lambda(I(G)) \geq \nu_0(G)$.  In general, however, $\lambda(I(G)) > \nu_0(G)$ happens.  On the other hand, we prove that if $\nu_0(G) \leq 2$, then $I(G)^{[\nu_0(G)]}$ is linearly related.
	
	 Section $4$ is devoted to  the study  of  squarefree powers with linear resolution.  The highlight is the result that if a tree $G$ possesses a perfect matching, then $I(G)^{[\nu_0(G)]}$ has linear resolution. The proof of this result relies on the distinguished properties of perfect matchings. A {\em perfect matching} of a finite simple graph $G$ is a maximum matching $M$ for which each vertex of $G$ belong to an edge $e \in M$.  Perfect matchings play a leading role in the classical matching theory of graphs.
	
	Next, in Section $5$, the forests $G$ with $\nu_0(G) \leq 2$ are completely classified. This classification indeed provides the complete list of forests $G$ for which $I(G)^{[2]}$ has linear resolution. It would, of course, be of interest to classify all finite simple graphs $G$ with $\nu_0(G) \leq 2$.
	
	Finally, the target of Section $6$ is the squarefree Ratliff property.  In homage to Ratliff \cite{R}, we say that an ideal $I$ satisfies the {\em Ratliff property} if $I^{k+1} : I = I^k$ for all $k \geq 1$. Not all monomial ideals satisfy the Ratliff property. However it is known \cite{MMV} that every edge ideal satisfies the Ratliff property.  The corresponding colon ideals  of squarefree powers behave quite different.  In fact, it is shown that if $I$ is  any  squarefree monomial ideal, then $I^{[k]} : I = I^{[k]}$ for all $k \geq 2$.   When $I(G)$ is an edge ideal of $G$ with no isolated vertex, by using a simple technique of matchings, it can  be easily proved that $I(G)^{[k]} : I(G)^{[2]} = I(G)^{[k]}$ for $2 < k \leq \nu(G)$.  The question arises whether $I(G)^{[k]} : I(G)^{[\ell]} = I(G)^{[k]}$ for all $k$ and all $\ell<k$. We conclude the present paper by giving a partial answer to this question.  To this end  we discuss edge ideals of equimatchable graphs.  A finite simple graph $G$ is called {\em equimatchable} if every maximal matching of $G$ is maximum.  It is shown that if $G$ is equimatchable and if $k$ and $\ell$ are integers with $1 \leq \ell < k \leq \nu(G)$, then $I(G)^{[k]} : I(G)^{[\ell]} = I(G)^{[k]}$.

	\section{Preliminaries}
	
		In this section we fix notation and recall some technical results which will be used in the sequel. Throughout this paper $S$ denotes the polynomial ring $K[x_1\dots, x_n]$ in $n$ indeterminates, where  $K$ is a field. Let $I$ be a  monomial ideal in $S$.  We denote by $G(I)$ the unique set of monomial generators of $I$.
	
	
	Let graph $G$ be a finite simple graph on the vertex set $V(G)=\{x_1,\ldots, x_n\}$ and the edge set $E(G)$. 
	In this paper we often identify an edge $e=\{a,b\}$ with its corresponding monomial $e=ab$ and we denote the edge by $(ab)$. Throughout the paper we denote by $G-e$ the graph on the same vertex set as $G$ obtained by removing the edge $e$ from $G$. For any $W\subseteq V(G)$, the induced subgraph of $G$ on $W$, denoted by $G_W$ is the graph with the vertex set $W$ and the edge set $\{e\in E(G): e\subseteq W\}$.  Then, for any $W\subseteq V(G)$ we denote by $G-W$ the induced subgraph $G_{V(G)\setminus W}$ of $G$. We denote the set of vertices of $G$ which are adjacent to a vertex $x$ of $G$ by $N_G(x)$. Moreover, we set $N_G[x]=N_G(x)\cup \{x\}$. 
	
	
	An \emph{induced matching} of $G$ is a matching which is an induced subgraph of $G$. We denote by $\nu_1(G)$ the \emph{induced matching number} of $G$ which is the maximum size of an induced matching in $G$. 
	An induced matching of size~2 is called a \emph{gap} of $G$. 
	A \emph{restricted matching} of $G$ was defined in \cite{BHZ} to be a matching in which there exists an edge which provides a gap in $G$ with any other edge in the matching. The maximum size of a restricted matching in $G$ is denoted by $\nu_0(G)$. It follows from the definitions that
	\[
	\nu_1(G)\leq \nu_0(G)\leq \nu(G).
	\]

	Let $I$ be a monomial ideal in $S$ generated in degree $d$. We define the graph $G_I$ whose vertex set is $G(I)$ and its edge set is
	\[
	E(G_I)=\{\{u,v\}: u,v\in G(I)~\text{with}~\deg (\lcm(u,v))=d+1 \}.
	\]

	For all $u, v\in G(I)$ let  $G^{(u,v)}_I$ be the induced subgraph of $G_I$ with vertex set
	\[
	V(G^{(u,v)}_I)=\{w\in G(I)\: \text{$w$ divides $\lcm(u, v)$}\}.
	\]
	
	\medskip
	Let $I\subset S$ be a graded ideal generated in a single degree. The ideal $I$ is said to be {\em linearly related}, if the first syzygy module of $I$ is generated by linear relations. 
	
	\medskip
	The following result was shown in \cite[Corollary~2.2]{BHZ}.
	
	\begin{Theorem} \label{connected}
		Let $I$ be a monomial ideal generated in degree $d$. Then $I$ is linearly related if and only if for all $u,v\in G(I)$  there is a path in $G^{(u,v)}_I$ connecting $u$ and $v$.	
	\end{Theorem}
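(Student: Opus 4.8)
The plan is to compute directly with the first syzygy module of $I$. Let $F_0=\Dirsum_{w\in G(I)}Se_w$ and let $\varphi\:F_0\to I$ be the surjection with $\varphi(e_w)=w$; write $Z=\Ker\varphi$ for the first syzygy module. It is classical that, for a monomial ideal, $Z$ is generated by the \emph{fundamental syzygies}
\[
\sigma_{u,v}=\frac{\lcm(u,v)}{u}\,e_u-\frac{\lcm(u,v)}{v}\,e_v,\qquad u,v\in G(I),
\]
and that $\sigma_{u,v}$ is homogeneous of multidegree $\lcm(u,v)$, hence of ordinary degree $\deg\lcm(u,v)\ge d+1$, with equality exactly when $\{u,v\}\in E(G_I)$. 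Let $Z^{\ell}\subseteq Z$ be the submodule generated by the fundamental syzygies $\sigma_{u,v}$ with $\{u,v\}\in E(G_I)$.

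The first step is the reduction: $I$ is linearly related if and only if $Z=Z^{\ell}$. The direction ``$\Leftarrow$'' is clear because $Z^{\ell}$ is generated in degree $d+1$. For ``$\Rightarrow$'', linear relatedness means $Z=S\cdot Z_{d+1}$ (by graded Nakayama, using that $Z$ has no elements of degree $\le d$), so it suffices to see $Z_{d+1}\subseteq Z^{\ell}$. Fix a multidegree $\bb$ with $|\bb|=d+1$: then $(F_0)_{\bb}$ has $K$-basis the elements $(x^{\bb}/w)e_w$ with $w\in G(I)$, $w\mid x^{\bb}$, all mapped to $x^{\bb}$ by $\varphi$, so $Z_{\bb}$ is spanned by the differences $(x^{\bb}/w)e_w-(x^{\bb}/w')e_{w'}$; and for two distinct degree-$d$ generators $w,w'$ dividing $x^{\bb}$ one has $d+1\le\deg\lcm(w,w')\le d+1$, forcing $\lcm(w,w')=x^{\bb}$ and $\{w,w'\}\in E(G_I)$, so that difference equals $\pm\sigma_{w,w'}\in Z^{\ell}$.

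The heart of the proof is a multidegree computation carried out separately for each pair $u,v\in G(I)$. Put $m=\lcm(u,v)$ and let $\bb$ be its exponent vector; since $\sigma_{u,v}$ is homogeneous of multidegree $\bb$, we have $\sigma_{u,v}\in Z^{\ell}$ iff $\sigma_{u,v}\in(Z^{\ell})_{\bb}$. Now $(F_0)_{\bb}$ has $K$-basis $\{f_w:=(m/w)e_w : w\in G(I),\ w\mid m\}$, indexed by the vertex set of $G^{(u,v)}_I$, and each $f_w$ maps to $m$, so $Z_{\bb}$ is the span of all differences $f_w-f_{w'}$. For a fundamental syzygy $\sigma_{w,w'}$ with $\{w,w'\}\in E(G_I)$, a monomial multiple $n\,\sigma_{w,w'}$ lies in multidegree $\bb$ precisely when $n=m/\lcm(w,w')$, which forces $\lcm(w,w')\mid m$, i.e. $w\mid m$ and $w'\mid m$, i.e. $\{w,w'\}\in E\bigl(G^{(u,v)}_I\bigr)$; in that case $n\,\sigma_{w,w'}=f_w-f_{w'}$. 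Hence $(Z^{\ell})_{\bb}$ is the span of $\{f_w-f_{w'} : \{w,w'\}\in E(G^{(u,v)}_I)\}$. By the elementary fact that, in a graph on a vertex set $W$ (here the $f_w$ playing the role of the standard basis of $K^W$), the span of the edge differences $f_w-f_{w'}$ contains $f_u-f_v$ if and only if $u$ and $v$ lie in the same connected component, we get $\sigma_{u,v}\in Z^{\ell}$ iff $u$ and $v$ are joined by a path in $G^{(u,v)}_I$. Combined with the first step: $I$ is linearly related iff $Z=Z^{\ell}$ iff every $\sigma_{u,v}$ lies in $Z^{\ell}$ iff for all $u,v\in G(I)$ there is a path in $G^{(u,v)}_I$ connecting $u$ and $v$.

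I expect the only delicate point, as opposed to routine bookkeeping, to be the reduction step ``$I$ linearly related $\iff Z=Z^{\ell}$'': one must argue that every first syzygy of degree $d+1$ is already a $K$-combination of \emph{linear} fundamental syzygies, which rests entirely on the degree count $\deg\lcm(w,w')\ge d+1$ for distinct degree-$d$ generators together with graded Nakayama. Everything downstream is the multidegree computation above plus the standard statement about the span of the edge vectors of a graph.
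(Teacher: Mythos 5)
Your proof is correct. Note that the paper does not prove this statement at all: it is quoted from \cite[Corollary~2.2]{BHZ}, so there is no in-text argument to compare against. Your route is a clean, self-contained one: you reduce everything to the classical fact that the first syzygy module $Z$ of a monomial ideal is generated by the fundamental (Taylor) syzygies $\sigma_{u,v}$, observe that in each multidegree $\bb$ the graded piece $Z_{\bb}$ is the kernel of a map of $K$-vector spaces sending every basis vector $f_w=(x^{\bb}/w)e_w$ to the same monomial, and then identify membership of $\sigma_{u,v}=f_u-f_v$ in the span of the ``linear'' edge differences with connectivity in $G_I^{(u,v)}$ via the standard description of the span of edge vectors of a graph. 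All the individual steps check out: the degree count $\deg\lcm(w,w')\geq d+1$ with equality exactly on $E(G_I)$, the Nakayama reduction (legitimate since $Z_{\leq d}=0$), the computation of $(Z^{\ell})_{\bb}$ as the span of edge differences of the induced subgraph, and the connected-component criterion (which is characteristic-free). The approach in \cite{BHZ} obtains the same conclusion from a formula for the multigraded Betti numbers $b_{1,\ab}(I)$ in terms of the reduced $0$-th homology of the graph on the generators dividing $x^{\ab}$; your argument trades that finer quantitative information (the exact value of $b_{1,\ab}$ as one less than the number of connected components) for a completely elementary computation at the level of the syzygy module itself, which is arguably better suited to a self-contained exposition. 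In fact your multidegree computation in the second step, restricted to $|\bb|=d+2$, also recovers Corollary~\ref{connected-one direction} directly, without invoking the restriction lemma.
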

	
	For the proof of the next corollary  we will use the so-called restriction lemma.

	\begin{Lemma}
		{\em (\cite[Lemma 4.4]{HHZ})}
		\label{restrict}
		Let $I\subset S$ be a monomial ideal, and let $\FF$ be its minimal multigraded free $S$-resolution. Furthermore, let  $m$ be a monomial. We set
		\[
		I^{\leq m}=(u\in   G(I)\:\;  u|m ).
		\]
		Let $F_i=\Dirsum_{j}S(-\ab_{ij})$. Then $\FF^{\leq m}$ with $$F_i^{\leq m}= \Dirsum_{j,\;  \xb^{\ab_{ij}}|m}S(-\ab_{ij})$$ is a subcomplex of $\FF$ and the minimal multigraded free resolution of $I^{\leq m}$.
	\end{Lemma}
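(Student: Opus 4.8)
The plan is to use the $\ZZ^n$-grading of $\FF$ throughout. Since $\FF$ is multigraded, each component $S(-\ab_{ij})\to S(-\ab_{i-1,k})$ of a differential $\partial_i$ is multiplication by a multihomogeneous element of degree $\ab_{ij}-\ab_{i-1,k}$; hence it vanishes unless $\ab_{i-1,k}\leq \ab_{ij}$ coordinatewise, in which case it is a scalar multiple of $\xb^{\ab_{ij}-\ab_{i-1,k}}$. From this it is immediate that $\FF^{\leq m}$ is a subcomplex: if $\xb^{\ab_{ij}}$ divides $m$ and the $(k,j)$-entry of $\partial_i$ is nonzero, then $\ab_{i-1,k}\leq \ab_{ij}\leq\deg m$, so $\xb^{\ab_{i-1,k}}$ divides $m$ as well, whence $\partial_i(F_i^{\leq m})\subseteq F_{i-1}^{\leq m}$. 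Moreover the differentials of $\FF^{\leq m}$ are restrictions of those of $\FF$, so their entries still lie in $\mm$; thus once we know $\FF^{\leq m}$ is a resolution of $I^{\leq m}$, it is automatically the minimal one. It remains to show $H_i(\FF^{\leq m})=0$ for $i\geq 1$ and that the composite $F_0^{\leq m}\hookrightarrow F_0\to I$, which has image $I^{\leq m}$ and annihilates $\partial_1(F_1^{\leq m})$, induces an isomorphism $H_0(\FF^{\leq m})\cong I^{\leq m}$.

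Homology of a multigraded complex is computed in each multidegree separately, so I would fix $\cb\in\NN^n$ and examine the complex of $K$-vector spaces $(\FF^{\leq m})_{\cb}$. If $\xb^{\cb}$ divides $m$, then any $j$ with $\ab_{ij}\leq\cb$ automatically satisfies $\ab_{ij}\leq\deg m$, so the summands of $F_i$ and of $F_i^{\leq m}$ contributing in degree $\cb$ coincide; hence $(\FF^{\leq m})_{\cb}=(\FF)_{\cb}$, which is exact in positive homological degrees with $0$th homology $(I)_{\cb}$. If $\xb^{\cb}$ does not divide $m$, put $\db=\cb\wedge\deg m$ (coordinatewise minimum), so $\xb^{\db}$ divides $m$. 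The key elementary fact is that for any $j$ the conditions ``$\ab_{ij}\leq\cb$ and $\xb^{\ab_{ij}}\mid m$'' and ``$\ab_{ij}\leq\db$'' are equivalent; hence the natural $K$-bases of $(F_i^{\leq m})_{\cb}$ and of $(F_i^{\leq m})_{\db}$ are indexed by the same set, and since in these bases the differentials are given by the same scalar matrices in both multidegrees, one obtains an isomorphism of complexes $(\FF^{\leq m})_{\cb}\cong(\FF^{\leq m})_{\db}=(\FF)_{\db}$.

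Putting the two cases together, $(\FF^{\leq m})_{\cb}$ is exact in positive homological degrees for every $\cb$, so $H_i(\FF^{\leq m})=0$ for $i\geq 1$. For degree $0$, the above gives $\dim_K H_0\bigl((\FF^{\leq m})_{\cb}\bigr)=\dim_K (I)_{\db}$ with $\db=\cb\wedge\deg m$ (and $\db=\cb$ in the first case), while a one-line divisibility argument shows $\xb^{\db}\in I$ if and only if $\xb^{\cb}\in I^{\leq m}$; hence $\dim_K H_0\bigl((\FF^{\leq m})_{\cb}\bigr)=\dim_K (I^{\leq m})_{\cb}$ for all $\cb$. Since the natural map $H_0(\FF^{\leq m})\to I^{\leq m}$ is surjective and preserves dimensions in every multidegree, it is an isomorphism, and $\FF^{\leq m}$ is therefore the minimal multigraded free resolution of $I^{\leq m}$. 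I expect the only delicate point to be this last step: because $I^{\leq m}$ has nonzero components in multidegrees that do not divide $m$, one cannot simply identify $(\FF^{\leq m})_{\cb}$ with $(\FF)_{\cb}$ there, and it is precisely the truncation to $\cb\wedge\deg m$ together with the dimension count that forces $H_0(\FF^{\leq m})$ to be exactly $I^{\leq m}$ and not something larger.
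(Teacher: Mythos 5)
The paper itself offers no proof of this lemma: it is quoted verbatim from \cite[Lemma 4.4]{HHZ} and used as a black box, so there is no internal argument to compare yours against. Judged on its own terms, your proof is correct and complete, and it follows the standard strand-by-strand strategy one would expect for such a statement. The two points that actually require care are exactly the ones you isolate: (1) in a multidegree $\cb$ with $\xb^{\cb}\nmid m$, the identification of $(\FF^{\leq m})_{\cb}$ with $(\FF)_{\db}$ for $\db=\cb\wedge\deg m$, which works because a summand $S(-\ab_{ij})$ of $F_i^{\leq m}$ contributes to degree $\cb$ precisely when $\ab_{ij}\leq\db$, and because the differentials, written in the monomial bases of the strands, are scalar matrices independent of the ambient multidegree; and (2) the $H_0$ computation, where one must check $\xb^{\db}\in I$ if and only if $\xb^{\cb}\in I^{\leq m}$ (a generator $u\in G(I)$ divides both $m$ and $\xb^{\cb}$ exactly when it divides $\xb^{\db}=\gcd(\xb^{\cb},m)$), so that the surjection $H_0(\FF^{\leq m})\to I^{\leq m}$ induced by $F_0^{\leq m}\hookrightarrow F_0\to I$ is forced to be an isomorphism by a dimension count in each multidegree. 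Your observation that minimality is automatic because the differentials of $\FF^{\leq m}$ are restrictions of those of $\FF$, hence still have entries in $\mm$, is also correct. I see no gap.
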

	
	As an immediate consequence of Lemma~\ref{restrict} we get
	
	\begin{Corollary}
		\label{restriction}
		Let $H$ be an induced subgraph of $G$. Then $b_{i,\ab}(I(H)^{[k]})\leq b_{i,\ab}(I(G)^{[k]})$ for all $i$ and $\ab \in \ZZ^n$. In particular,  $\projdim(I(H)^{[k]})\leq  \projdim(I(G)^{[k]})$ and $\reg(I(H)^{[k]})\leq  \reg(I(G)^{[k]})$.
	\end{Corollary}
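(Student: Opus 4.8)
The plan is to exhibit $I(H)^{[k]}$ as a restriction of $I(G)^{[k]}$ in the sense of Lemma~\ref{restrict} and then read the Betti-number inequality directly off the resulting subcomplex.

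Write $W = V(H)$, set $m = \prod_{x_i \in W} x_i$, and put $I = I(G)^{[k]}$. The one computation to carry out is the identification of $I^{\leq m}$. A generator of $I$ has the form $u_M = \prod_{e \in M} e$ for a $k$-matching $M$ of $G$; since $u_M$ is squarefree, $u_M \mid m$ if and only if every vertex covered by an edge of $M$ lies in $W$, which (because $H$ is an \emph{induced} subgraph) is exactly the condition that $M$ be a $k$-matching of $H$. Hence $G(I^{\leq m}) = \{\, u_M : M \text{ a } k\text{-matching of } H \,\} = G(I(H)^{[k]})$, so $I^{\leq m} = I(H)^{[k]}$.

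Now apply Lemma~\ref{restrict} to $I$ and $m$. If $\FF$ is the minimal multigraded free resolution of $I$ with $F_i = \Dirsum_j S(-\ab_{ij})$, then $\FF^{\leq m}$, whose $i$-th term keeps precisely the summands $S(-\ab_{ij})$ with $\xb^{\ab_{ij}} \mid m$, is the minimal multigraded free resolution of $I^{\leq m} = I(H)^{[k]}$. In particular $F_i^{\leq m}$ is a multigraded direct summand of $F_i$, so for all $i$ and all $\ab \in \ZZ^n$ the number of summands of $F_i^{\leq m}$ in degree $\ab$ is at most the corresponding number for $F_i$; that is, $b_{i,\ab}(I(H)^{[k]}) \leq b_{i,\ab}(I(G)^{[k]})$.

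The two remaining assertions are then immediate: $\projdim$ is the largest homological degree in which some multigraded Betti number is nonzero, and $\reg$ is the supremum of $|\ab| - i$ over the nonvanishing $b_{i,\ab}$, so both quantities can only increase when the Betti numbers of $I(H)^{[k]}$ are replaced by the at-least-as-large Betti numbers of $I(G)^{[k]}$. There is no genuine obstacle in this argument; the only point requiring attention is the equality $I^{\leq m} = I(H)^{[k]}$, which relies on $H$ being induced so that the matchings of $G$ supported on $W$ coincide with the matchings of $H$.
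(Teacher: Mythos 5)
Your proof is correct and is exactly the argument the paper intends: the paper states the corollary as an immediate consequence of Lemma~\ref{restrict}, and your write-up supplies the one nontrivial detail — that with $m=\prod_{x_i\in V(H)}x_i$ one has $(I(G)^{[k]})^{\leq m}=I(H)^{[k]}$ because $H$ is induced — before reading off the Betti number inequality from the subcomplex $\FF^{\leq m}$. Nothing to add.
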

	
	\begin{Corollary}\label{connected-one direction}
		Let $I$ be a monomial ideal generated in degree $d$, and let $m$ be a monomial of degree $d+2$. Suppose for any $u,v\in G(I)$ with $\mathrm{lcm}(u,v)=m$ there exists a path in $G_{I}^{(u,v)}$ connecting $u$ and $v$. Then $b_{1,m}(I)=0$.
	\end{Corollary}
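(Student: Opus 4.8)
The plan is to deduce this from Theorem~\ref{connected} by restricting the resolution to the monomial $m$. Put $I^{\leq m}=(u\in G(I)\: u\mid m)$ as in Lemma~\ref{restrict}. Since $m$ is a single monomial, that lemma tells us that the minimal multigraded free resolution of $I^{\leq m}$ is a subcomplex of the one of $I$ and that $b_{i,\ab}(I^{\leq m})=b_{i,\ab}(I)$ whenever $\xb^{\ab}\mid m$; in particular $b_{1,m}(I)=b_{1,m}(I^{\leq m})$. Hence it is enough to show $b_{1,m}(I^{\leq m})=0$.

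The next step is to verify that $I^{\leq m}$ is linearly related by checking the criterion of Theorem~\ref{connected}. Observe that $I^{\leq m}$ is again generated in degree $d$, with $G(I^{\leq m})=\{w\in G(I)\: w\mid m\}$, and that $G_{I^{\leq m}}$ is exactly the induced subgraph of $G_I$ on $G(I^{\leq m})$; consequently, for $u,v\in G(I^{\leq m})$ with $\lcm(u,v)=m$ the graph $G_{I^{\leq m}}^{(u,v)}$ coincides with $G_I^{(u,v)}$, since every element of $G(I^{\leq m})$ divides $m$. Now take arbitrary $u,v\in G(I^{\leq m})$. As $u,v\mid m$ and $\deg m=d+2$, we have $d\le\deg\lcm(u,v)\le d+2$. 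If $\deg\lcm(u,v)=d$ then $u=v$ and there is nothing to prove; if $\deg\lcm(u,v)=d+1$ then $\{u,v\}$ is an edge of $G_{I^{\leq m}}$, and since $u,v$ lie in the vertex set of $G_{I^{\leq m}}^{(u,v)}$ this edge connects them there; and if $\deg\lcm(u,v)=d+2$ then $\lcm(u,v)=m$, so $G_{I^{\leq m}}^{(u,v)}=G_I^{(u,v)}$ and the hypothesis provides a path from $u$ to $v$. Thus the criterion of Theorem~\ref{connected} is satisfied and $I^{\leq m}$ is linearly related.

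Finally, $I^{\leq m}$ being linearly related means that every minimal first syzygy of $I^{\leq m}$ sits in degree $d+1$, i.e. $b_{1,\ab}(I^{\leq m})=0$ whenever $\deg\xb^{\ab}\neq d+1$. Since $\deg m=d+2$, this yields $b_{1,m}(I^{\leq m})=0$, and combining with the first paragraph gives $b_{1,m}(I)=0$, as desired.

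I do not anticipate a genuine obstacle here: the argument is essentially the ``local'' half of Theorem~\ref{connected}. The one point requiring care is the identification $G_{I^{\leq m}}^{(u,v)}=G_I^{(u,v)}$ in the case $\lcm(u,v)=m$, together with checking that the intermediate case $\deg\lcm(u,v)=d+1$ produces a connecting edge; both are immediate from the definitions once written out.
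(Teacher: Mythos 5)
Your proposal is correct and follows the paper's own argument essentially verbatim: restrict to $I^{\leq m}$ via Lemma~\ref{restrict}, observe $b_{1,m}(I)=b_{1,m}(I^{\leq m})$, and verify the connectivity criterion of Theorem~\ref{connected} for $I^{\leq m}$ by splitting on $\deg\lcm(u,v)$. The extra care you take with the identification $G_{I^{\leq m}}^{(u,v)}=G_I^{(u,v)}$ and with the degenerate case $\deg\lcm(u,v)=d$ is sound but not a substantive departure.
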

	
	\begin{proof}
		Let $I'=I^{\leq m}$. By the restriction lemma we have $b_{1,w}(I)=b_{1,w}(I')$ for all $w|m$. Therefore, it suffices to show that $I'$ is linearly related. Let $u,v\in G(I')$. If $\deg(\lcm(u,v))=d+2$,  then $\lcm(u,v)=m$. Therefore, by our assumption there exists a path in $G_{I}^{(u,v)}=G_{I'}^{(u,v)}$ connecting $u$ and $v$. If $\deg(\lcm(u,v))=d+1$, then $u$ and $v$ are connected by an edge by definition. Hence Theorem~\ref{connected} implies that $I'$ is linearly related.
	\end{proof}

	The next lemma describes another case for the vanishing of $b_{1,m}(I)$.
	
	\begin{Lemma}
		\label{taylor}
		Let $I$ be a monomial ideal and let $m$ be a monomial. Suppose for all $u,v\in G(I)$ with $\lcm(u,v)=m$ there exists $w\in G(I)\setminus \{u,v\}$ such that
		\begin{enumerate}
			\item[(i)] $\lcm(u,v,w)=m$;
			\item[(ii)] $\lcm(u,w)\neq m$ and $\lcm(v,w)\neq m$.
		\end{enumerate}
		Then $b_{1,m}(I)=0.$
	\end{Lemma}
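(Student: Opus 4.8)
The plan is to compute $b_{1,m}(I)$ from the multidegree-$m$ strand of the Taylor complex and to use the element $w$ supplied by the hypothesis to kill the relevant homology.

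Assume first $m\notin G(I)$; the case $m\in G(I)$ is trivial, since then $m$ is the only minimal generator of $I$ dividing $m$, so $I$ admits no monomial first syzygy of degree $m$ and $b_{1,m}(I)=0$. Let $\FF$ be the Taylor complex of $S/I$ on $G(I)$: it is a (generally non-minimal) $\ZZ^n$-graded free resolution of $S/I$ whose $i$th free module has basis $\{e_F : F\subseteq G(I),\ |F|=i\}$ with $e_F$ in multidegree $\lcm(F)$. Since any free resolution computes $\Tor$, we have $b_{i,\ab}(S/I)=\dim_K H_i(\FF\otimes_S K)_\ab$ for all $i$ and $\ab$, and the exact sequence $0\to I\to S\to S/I\to 0$ gives $b_{1,m}(I)=b_{2,m}(S/I)$. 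The differential of $\FF\otimes_S K$ is $\bar\partial e_F=\sum_{u\in F,\ \lcm(F\setminus u)=\lcm(F)}\pm\, e_{F\setminus u}$, and its multidegree-$m$ part is the subcomplex with basis $\{e_F : \lcm(F)=m,\ |F|=i\}$ in homological degree $i$. Because $m\notin G(I)$ this subcomplex vanishes in homological degrees $0$ and $1$, so its $H_2$ equals the cokernel of $\partial_3$.

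It remains to show that $\partial_3$ maps onto the degree-$2$ part of this subcomplex, which is spanned by the elements $e_{\{u,v\}}$ with $u\ne v$ in $G(I)$ and $\lcm(u,v)=m$. Fix such a pair, and apply the hypothesis to get $w\in G(I)\setminus\{u,v\}$ with $\lcm(u,v,w)=m$, $\lcm(u,w)\ne m$ and $\lcm(v,w)\ne m$. Then $F=\{u,v,w\}$ has $\lcm(F)=m$, so $e_F$ lies in the degree-$3$ part; in $\bar\partial e_F$ the term obtained by deleting $u$ (resp.\ $v$) vanishes because $\lcm(v,w)\ne m$ (resp.\ $\lcm(u,w)\ne m$), while the term obtained by deleting $w$ survives because $\lcm(u,v)=m$. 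Hence $\bar\partial e_F=\pm\, e_{\{u,v\}}$, so every spanning element lies in the image of $\partial_3$; therefore $H_2(\FF\otimes_S K)_m=0$ and $b_{1,m}(I)=b_{2,m}(S/I)=0$.

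I do not expect a real obstacle. The only point that needs care is the routine bookkeeping that, once $m\notin G(I)$, the multidegree-$m$ strand of the Taylor complex is concentrated in homological degrees $\ge 2$; this is exactly what reduces the claim to the one-line verification that $\partial_3$ is surjective.
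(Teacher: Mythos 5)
Your proof is correct and follows essentially the same route as the paper: both compute $b_{1,m}(I)$ as $\dim_K H_2(\TT\otimes_S K)_m$ from the Taylor complex and then use the hypothesized $w$ to exhibit $e_{\{u,v,w\}}$ as a preimage of $e_{\{u,v\}}$ under $\partial_3$, conditions (i) and (ii) killing the other two faces. Your version is if anything slightly more careful than the paper's, since you explicitly dispose of the case $m\in G(I)$ and justify why every degree-$m$ element in homological degree $2$ is a cycle.
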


	\begin{proof}
		Let $G(I)=\{u_1,\ldots,u_r\}$. For the proof we use the Taylor resolution $\TT$ associated with $u_1,\ldots,  u_r$ (see~\cite{T}). Recall that $T_k$ is a free $S$-module with basis $e_{i_1}\wedge e_{i_2}\wedge \cdots \wedge e_{i_k}$ with $1\leq i_1<i_2<\cdots <i_k\leq r$ and
		\[
		\deg(e_{i_1}\wedge e_{i_2}\wedge \cdots \wedge e_{i_k})=\lcm(u_{i_1},u_{i_2},\ldots,u_{i_k}).
		\]
		The  chain map $T_2\to T_1$ is given by $e_i\wedge e_j\mapsto (\lcm(u_i,u_j)/u_j)e_j-(\lcm(u_i,u_j)/u_i)e_i$, and $T_3\to T_2$ is defined by
		\[
		e_i\wedge e_j\wedge e_k\mapsto \frac{\lcm(u_i,u_j,u_k)}{\lcm(u_j,u_k)}e_j\wedge e_k-\frac{\lcm(u_i,u_j,u_k)}{\lcm(u_i,u_k)}e_i\wedge e_k +\frac{\lcm(u_i,u_j,u_k)}{\lcm(u_i,u_j)}e_i\wedge e_j.
		\]
		Since the Taylor resolution is a $S$-free resolution of $S/I$ it follows that
		\begin{eqnarray}
		\nonumber 
		\label{beta}
		b_{1,m}(I)=\dim_K H_2(\TT\tensor_SK)_m.
		\end{eqnarray}
		
		The cycles in $\TT\tensor_SK$ of degree $m$ and of homological degree $2$ are all of the form $(e_i\wedge e_j)\tensor 1$ with $i<j$ and $\lcm(u_i,u_j)=m$. Thus we have to show that these elements are boundaries in $\TT\tensor_SK$. We set $u=u_i$ and  $v=u_j$. By our assumptions there exists $w\in G(I)\setminus \{u,v\}$ satisfying (i) and (ii). There exists $k\neq i,j$ such that $w=u_k$. Without loss of generalities we may assume that $i<j<k$. Then $(e_i\wedge e_j\wedge e_k)\tensor 1\mapsto (e_i\wedge e_j)\tensor 1$, because of (i) and (ii).
	\end{proof}

	\section{Bounds for the regularity of squarefree powers of edge ideals}
	
	In this section we give upper and lower bounds of the regularity of squarefree powers of edge ideals.
	\begin{Theorem}
		\label{lower}
		Let $G$ be graph.  Then $\reg(I(G)^{[k]})\geq 2k+\nu_1(G)-k$ for all $k\leq \nu_1(G)$.
	\end{Theorem}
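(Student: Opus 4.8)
The plan is to reduce, via the restriction principle, to the case where $G$ is a disjoint union of edges, and then to read off the required Betti number from Hochster's formula, the relevant simplicial homology being computed by an induction. This is the squarefree analogue of the classical proof that $\reg(I(G)^k)\geq 2k+\nu_1(G)-1$, which reduces to computing the regularity of the ordinary powers of a complete intersection of quadrics. Concretely: set $m=\nu_1(G)$ and fix an induced matching $M=\{e_1,\dots,e_m\}$ of $G$ with vertex set $W$, so $|W|=2m$. Since $M$ is an \emph{induced} matching, the induced subgraph $H:=G_W$ is exactly the disjoint union of the $m$ edges $e_1,\dots,e_m$, and by Corollary~\ref{restriction} we have $\reg(I(G)^{[k]})\geq\reg(I(H)^{[k]})$, with $I(H)^{[k]}\neq(0)$ because $k\leq m$. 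So it suffices to show $\reg(I(H)^{[k]})\geq 2k+m-k=m+k$ for $1\leq k\leq m$.

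Write $e_i=\{x_i,y_i\}$, so $I(H)^{[k]}$ is the squarefree monomial ideal of $S_H=K[x_1,y_1,\dots,x_m,y_m]$ minimally generated by the $\binom{m}{k}$ monomials $\prod_{i\in L}x_iy_i$ with $L\subseteq[m]$, $|L|=k$; note that all $2m$ variables occur (as $k\leq m$). It is enough to produce one nonzero graded Betti number, namely $b_{m-k,\,2m}(I(H)^{[k]})\neq 0$, since then $\reg(I(H)^{[k]})\geq 2m-(m-k)=m+k$. Let $\Delta=\Delta_{m,k}$ be the Stanley--Reisner complex of $I(H)^{[k]}$: a subset $F$ of $\{x_1,y_1,\dots,x_m,y_m\}$ is a face of $\Delta$ precisely when $|\{\,i:\{x_i,y_i\}\subseteq F\,\}|\leq k-1$, and a direct count gives $\dim\Delta=m+k-2$. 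Since $2m$ is the total number of variables, Hochster's formula gives
\[
b_{m-k,\,2m}(I(H)^{[k]})=b_{m-k+1,\,2m}(S_H/I(H)^{[k]})=\dim_K\widetilde H_{m+k-2}(\Delta;K),
\]
i.e.\ the \emph{top} reduced homology of $\Delta$. Thus the theorem reduces to showing $\widetilde H_{m+k-2}(\Delta_{m,k};K)\neq 0$.

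For this we prove, by induction on $m$, that $\widetilde H_i(\Delta_{m,k};K)=0$ for $i\neq m+k-2$ and $\widetilde H_{m+k-2}(\Delta_{m,k};K)\cong K^{\binom{m-1}{k-1}}$. When $k=1$, $\Delta_{m,1}$ is the $m$-fold join of copies of $S^{0}$, hence $\simeq S^{m-1}$; when $k=m$, $\Delta_{m,m}$ is the boundary complex of the $(2m-1)$-simplex, hence $\simeq S^{2m-2}$; these settle the base $m=1$ and the extreme values of $k$. For $1<k<m$, split the faces of $\Delta_{m,k}$ according to their intersection with $\{x_m,y_m\}$: a face meeting $\{x_m,y_m\}$ in at most one vertex restricts to a face of $\Delta_{m-1,k}$, and a face containing both $x_m,y_m$ restricts to a face of $\Delta_{m-1,k-1}$. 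Writing $\tau$ for the full simplex on $\{x_m,y_m\}$ and $\partial\tau\simeq S^0$ for its boundary, this yields
\[
\Delta_{m,k}=(\Delta_{m-1,k}\ast\partial\tau)\cup(\Delta_{m-1,k-1}\ast\tau),
\]
with intersection $\Delta_{m-1,k-1}\ast\partial\tau$; here the second summand is a cone (contractible), the first is the suspension $\Sigma\Delta_{m-1,k}$, and the intersection is $\Sigma\Delta_{m-1,k-1}$. Feeding this into the Mayer--Vietoris sequence and using the inductive hypothesis (the homology of $\Delta_{m-1,k}$ and of $\Delta_{m-1,k-1}$ is concentrated in top degree), all but one segment collapses, giving a short exact sequence
\[
0\to\widetilde H_{m+k-3}(\Delta_{m-1,k};K)\to\widetilde H_{m+k-2}(\Delta_{m,k};K)\to\widetilde H_{m+k-4}(\Delta_{m-1,k-1};K)\to 0
\]
and $\widetilde H_i(\Delta_{m,k};K)=0$ for $i\neq m+k-2$. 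Since $\binom{m-2}{k-1}+\binom{m-2}{k-2}=\binom{m-1}{k-1}$, the induction closes; in particular $\widetilde H_{m+k-2}(\Delta_{m,k};K)\neq 0$, which is what was needed.

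The reduction to a disjoint union of edges and the invocation of Hochster's formula are routine; the real work is the last step — identifying $\Delta_{m,k}$ explicitly, extracting the join/suspension decomposition, and running the Mayer--Vietoris induction with the correct bookkeeping of homological degrees (in particular verifying that the lower homology vanishes so that the sequence degenerates as claimed).
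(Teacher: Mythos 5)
Your argument is correct, and it reaches the same target as the paper --- the nonvanishing of $b_{m-k,\,2m}(I(H)^{[k]})$ for $H$ the induced matching, after the identical reduction via Corollary~\ref{restriction} --- but by a genuinely different route. The paper avoids all topology: it observes that $I(H)^{[k]}$ is obtained from the squarefree Veronese ideal $J^{[k]}=(z_1,\dots,z_m)^{[k]}$ by the flat substitution $z_i\mapsto x_iy_i$ (flat because $x_1y_1,\dots,x_my_m$ is a regular sequence), so that $b_{i,j}(J^{[k]})=b_{i,2j}(I(H)^{[k]})$, and then reads off $b_{m-k,m}(J^{[k]})\neq 0$ from the closed Betti-number formula for squarefree strongly stable ideals (\cite[Theorem 7.4.1]{HH}), realized by $F=\{1,\dots,m-k\}$ and $u=z_{m-k+1}\cdots z_m$. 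You instead identify the Stanley--Reisner complex $\Delta_{m,k}$ of $I(H)^{[k]}$, apply Hochster's formula, and compute the top homology by a Mayer--Vietoris induction on $m$ using the decomposition $\Delta_{m,k}=(\Delta_{m-1,k}\ast\partial\tau)\cup(\Delta_{m-1,k-1}\ast\tau)$; I checked the cover, the intersection $\Delta_{m-1,k-1}\ast\partial\tau$, the degree bookkeeping, and the base cases, and they are all right. Your approach is longer but more self-contained (it does not need the machinery of squarefree stable ideals), and it yields strictly more: the full homology of $\Delta_{m,k}$, concentrated in top degree with $\dim_K\widetilde H_{m+k-2}=\binom{m-1}{k-1}$ --- which agrees with the value $b_{m-k,m}(J^{[k]})=\binom{m-1}{k-1}$ that the paper's formula produces, a reassuring consistency check. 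The paper's route is the shorter one once \cite{HH} is granted, and its flat base change trick transfers the entire resolution (not just one Betti number), which is why it can also note $b_{i,j}(J^{[k]})=b_{i,2j}(I^{[k]})$ for all $i,j$.
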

	
	\begin{proof}
		Let $r=\nu_1(G)$. There exists an induced subgraph $H$ with $r$ edges which forms an induced matching of $G$. By Corollary~\ref{restriction}, it follows that $\reg(I(G)^{[k]})\geq \reg(I(H)^{[k]})$. Thus it suffices to show that $\reg(I(H)^{[k]})\geq 2k+r-k$. We set $I=I(H)$ and claim that
		$$b_{r-k,2r}(I^{[k]})\neq 0.$$
		The theorem follows once we have proved this claim.
		
		We may assume that $I=(x_1y_1,\ldots, x_ry_r)$. Let $J=(z_1,\ldots,z_r)$, where $\{z_1,\ldots,z_r\}$ is a new set of variables.  Then $J^{[k]}$ is a squarefree strongly stable ideal in the polynomial ring $R=K[z_1,\ldots,z_r]$.
		
		We first show that $b_{r-k,r}(J^{[k]})\neq 0$. By \cite[Theorem 7.4.1]{HH}, $b_{r-k,r}(J^{[k]})\neq 0$ if there exists a subset $F\subset [r]$ and $u\in G(J^{[k]})$ such that (i) $|F|=r-k$, (ii)~$\max\{i\:\; i\in F\}< \max\{i\: z_i|u\}$ and  (iii) $F\sect \supp(u)=\emptyset$.  These conditions can be realized by choosing $F=\{1,2,\ldots,r-k\}$ and $u=z_{r-k+1}\cdots z_r$.
		
		Consider the map $\varphi\: R\to S=K[x_1,\ldots,x_r, y_1,\ldots,y_r]$, $z_i\mapsto x_iy_i$ for $i=1,\ldots, r$, and let $\FF$ be the graded  minimal free resolution of $J^{[k]}$ over $R$. Since $x_1y_1,\ldots, x_ry_r$ is a regular sequence on $S$, the $K$-algebra homomorphism $\varphi$ is flat, and it follows that $\GG:= \FF\tensor_RS$ is the minimal free resolution  of $I^{[k]}$ over $S$, and the  entries of the chain maps of $\GG$ are obtained by applying $\varphi$ to the entries of the chain maps of $\FF$.  Since $J^{[k]}$ has $k$-linear resolution, it follows that $G_i=S(-2k-2i)^{b_i(I^{[k]})}$ for all $i$. Thus, since $b_{i,j}(J^{[k]})=b_{i,2j}(I^{[k]})$ for any $i,j$, and since  $b_{r-k,r}(J^{[k]})\neq 0$,  the desired result follows.
	\end{proof}

	For the regularity of ordinary  powers of edge ideals the following upper bound was established.
	
	\begin{Theorem}{\em (\cite[Theorem 3.4]{BBH})}
		\label{established}
		For all $k\geq 1$, $\reg(I(G)^k)\leq 2k+\nu(G)-1$.
	\end{Theorem}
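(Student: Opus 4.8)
The plan is to proceed by induction on $k$, along the lines of Banerjee's inductive treatment of the regularity of powers of edge ideals. The base case $k=1$ is precisely the classical bound $\reg(I(G))\le\nu(G)+1$, equivalently $\reg(S/I(G))\le\nu(G)$, for edge ideals, which I would simply quote (see, for instance, \cite{BBH}). For the inductive step, fix $k\ge 1$, set $I=I(G)$, assume $\reg(I^{k})\le 2k+\nu(G)-1$, and aim at $\reg(I^{k+1})\le 2(k+1)+\nu(G)-1=2k+\nu(G)+1$.

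The engine is Banerjee's inductive lemma (see \cite{B}): for the edge ideal $I=I(G)$,
\[
\reg(I^{k+1})\ \le\ \max\Bigl(\,\{\,\reg(I^{k+1}:c)+2k\ :\ c\in G(I^{k})\,\}\ \cup\ \{\,\reg(I^{k})\,\}\,\Bigr),
\]
where each $c\in G(I^{k})$ is a product $c=e_1\cdots e_k$ of $k$ edges of $G$. The term $\reg(I^{k})$ causes no trouble: the induction hypothesis bounds it by $2k+\nu(G)-1\le 2k+\nu(G)+1$. Hence the whole statement is reduced to the single estimate
\[
\reg\bigl(I^{k+1}:c\bigr)\ \le\ \nu(G)+1\qquad\text{for every product } c=e_1\cdots e_k \text{ of } k \text{ edges of } G,
\]
because then every contributing term satisfies $\reg(I^{k+1}:c)+2k\le 2k+\nu(G)+1=2(k+1)+\nu(G)-1$, and the induction closes.

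To establish this estimate I would invoke Banerjee's \emph{even-connection} description of the colon ideal (see \cite{B}): for $c=e_1\cdots e_k$ the ideal $I^{k+1}:c$ is generated in degrees $\le 2$ and is, essentially, the edge ideal $I(G_c)$ of a graph $G_c$ on $V(G)$ which contains $G$, the additional edges of $G_c$ joining pairs of vertices that are even-connected with respect to $c$. It then remains to show $\reg(I(G_c))\le\nu(G)+1$. The delicate point is that one cannot simply feed $G_c$ into the matching bound $\reg(S/I(G_c))\le\nu(G_c)$: adjoining even-connected edges can strictly raise the matching number, and small examples with $\nu(G_c)>\nu(G)$ do occur. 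Instead one must exploit that every new edge of $G_c$ is ``dominated'' by one of the edges $e_i$ appearing in $c$, so that the new edges cannot enlarge the invariant of $G_c$ actually responsible for its regularity; carrying out this refined analysis of matchings in $G_c$ — replacing a $\nu(G_c)$-bound by a $\nu(G)$-bound — is the step I expect to be the main obstacle. Once $\reg(I(G_c))\le\nu(G)+1$ is secured, the remainder of the argument is just the assembly of the two displayed inequalities above.
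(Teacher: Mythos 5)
First, a remark on the comparison itself: the paper does not prove this statement at all --- it is quoted verbatim from \cite[Theorem 3.4]{BBH} and used as a black box (e.g.\ in Lemma~\ref{separate case}, Lemma~\ref{colon} and Question~\ref{upper}). So your proposal can only be judged as a standalone argument, and as such it has a genuine gap. The scaffolding is right and is indeed the scaffolding used in \cite{BBH}: induction on $k$, the H\`a--Van Tuyl bound $\reg(I(G))\le \nu(G)+1$ as the base case, Banerjee's inequality
\[
\reg(I^{k+1})\le \max\bigl(\{\reg(I^{k+1}:c)+2k : c\in G(I^k)\}\cup\{\reg(I^k)\}\bigr),
\]
and the even-connection description of $I^{k+1}:c$ as the edge ideal of a graph $G_c\supseteq G$ (up to polarizing the loops $x_i^2$ that can occur). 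But everything then hinges on the single estimate $\reg(I(G_c))\le \nu(G)+1$, and this is exactly the step you declare to be ``the main obstacle'' and do not carry out. You are right that it cannot be dispatched by applying the matching bound to $G_c$ itself: already for $k=1$ and $c=xy$ the new edges form essentially $N_G(x)\times N_G(y)$, and for a double star with centers $x,y$ one gets $\nu(G_c)=3>2=\nu(G)$. So the one nontrivial ingredient of the theorem is precisely what is missing; the rest of your outline is routine assembly. (Invoking \cite{BBH} for that missing estimate would make the argument circular, since that estimate \emph{is} the content of their Theorem 3.4.)

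If you want to close the gap, the argument that works --- and the one this paper mimics in its squarefree setting in Lemmas~\ref{separate case} and~\ref{colon} --- is an induction on the graph itself rather than a matching count in $G_c$: one deletes a suitable vertex $x$ of $G_c$ and uses $\reg(I(G_c))\le\max\{\reg(I(G_c-x)),\,\reg(I(G_c-N_{G_c}[x]))+1\}$, showing that removing $N_{G_c}[x]$ forces the matching number of the ambient original graph down by enough to absorb the ``$+1$''; the domination of each new edge of $G_c$ by one of the $e_i$ in $c$ is what makes that bookkeeping work. Without some such argument, your proposal is a correct reduction of the theorem to its hardest lemma, not a proof.
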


	By applying the restriction lemma (Lemma~\ref{restrict}) we see that any upper bound for the regularity of ordinary  powers of edge ideals is also an upper bound for the regularity of the squarefree powers.
	
	We expect however a stronger   bound for squarefree powers.
	
	\begin{Question}\label{upper}
		For all $k\leq \nu(G)$ does $\reg(I(G)^{[k]})\leq 2k+\nu(G)-k$ hold?
	\end{Question}
	
	This upper bound is valid for $k=1$ because of Theorem~\ref{established}, and for $k=\nu(G)$  because of the following theorem
	
	\begin{Theorem} \label{highest power}
		{\em(}\cite[Theorem~5.1]{BHZ}{\em)}
		Let $G$ be a graph. Then $I(G)^{[\nu(G)]}$ has linear quotients.
	\end{Theorem}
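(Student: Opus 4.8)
The plan is to deduce the statement from the general fact that every \emph{polymatroidal} monomial ideal has linear quotients \cite{HH}. Recall that a monomial ideal $I$ generated in a single degree is polymatroidal if whenever $u,v\in G(I)$ and $x_i$ is a variable with $\deg_{x_i}(u)>\deg_{x_i}(v)$, there is a variable $x_j$ with $\deg_{x_j}(v)>\deg_{x_j}(u)$ such that $x_j(u/x_i)\in G(I)$. First I would observe that $I(G)^{[\nu(G)]}$ is generated in the single degree $2\nu(G)$, its generators being exactly the squarefree monomials $u_M=\prod_{x\in\supp(M)}x$ as $M$ runs over the maximum matchings of $G$, where $\supp(M)$ denotes the set of the $2\nu(G)$ vertices met by $M$; note that $u_M=u_N$ precisely when $\supp(M)=\supp(N)$. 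For two such generators, $\deg_x(u_M)>\deg_x(u_N)$ just says $x\in\supp(M)\setminus\supp(N)$, so the polymatroidal exchange property is equivalent to the following combinatorial claim, which is the heart of the matter.

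\medskip
\noindent\emph{Exchange claim.} If $M,N$ are maximum matchings of $G$ and $v\in\supp(M)\setminus\supp(N)$, then there exist $w\in\supp(N)\setminus\supp(M)$ and a maximum matching $M'$ of $G$ with $\supp(M')=(\supp(M)\setminus\{v\})\cup\{w\}$.
\medskip

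To prove the claim I would use the subgraph $H=M\triangle N$ whose edges are those lying in exactly one of $M$ and $N$. Every vertex has degree at most $2$ in $H$, so each component of $H$ is a path or a cycle whose edges alternate between $M\setminus N$ and $N\setminus M$. Since $v\in\supp(M)\setminus\supp(N)$, the vertex $v$ meets exactly one edge of $H$, and it lies in $M\setminus N$; hence $v$ is an endpoint of a path component $C$ of $H$. If $C$ had odd length, its other endpoint would likewise meet only an edge of $M\setminus N$, and then $N\triangle E(C)$ would be a matching with one more edge than $N$, contradicting maximality of $N$. So $C$ has even length; writing its vertices as $v=a_0,a_1,\dots,a_{2t}=w$, one checks that in $H$ the vertex $w$ meets only an edge of $N\setminus M$, whence $w\in\supp(N)\setminus\supp(M)$. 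Now set $M'=M\triangle E(C)$. Then $M'$ is a matching with $|M'|=|M|=\nu(G)$, hence maximum, and comparing the vertices covered along $C$ before and after the switch gives $\supp(M')=(\supp(M)\setminus\{v\})\cup\{w\}$, as wanted.

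I expect the only real obstacle to be the exchange claim itself, which is the classical statement that the supports of the maximum matchings of $G$ are the bases of a matroid, namely Edmonds' matching matroid; alternatively one could simply invoke that theorem, thereby identifying $I(G)^{[\nu(G)]}$ with the basis monomial ideal of the matching matroid, and then quote that such an ideal is polymatroidal. Apart from that, the remaining point requiring care is the routine bookkeeping in the last step: checking that $M\triangle E(C)$ is indeed a matching of cardinality $\nu(G)$ and that its support is exactly the claimed one-element modification of $\supp(M)$.
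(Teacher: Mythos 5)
The paper does not actually prove this statement: Theorem~\ref{highest power} is quoted from \cite[Theorem~5.1]{BHZ} and used as a black box, so there is no in-text argument to compare yours against. Your proposal is a correct, self-contained proof, and it is a genuinely structural one: identifying $G(I(G)^{[\nu(G)]})$ with the basis monomials of the Edmonds--Fulkerson matching matroid reduces everything to the basis exchange axiom, after which linear quotients follow from the standard fact that matroidal (hence polymatroidal) ideals have linear quotients, e.g.\ \cite[Theorem~12.6.2]{HH}. Your verification of the exchange claim is sound: the key points all check out, namely that $v$ has degree one in $M\triangle N$ so it heads a path component $C$; that an odd component starting at $v$ would be $N$-augmenting (its far endpoint meets no edge of $N\setminus M$ by the degree count in $H$, and no edge of $M\cap N$ since it already meets an $M$-edge), contradicting maximality of $N$; and that $M\triangle E(C)$ is a maximum matching with support $(\supp(M)\setminus\{v\})\cup\{w\}$, since $a_{2t}=w\notin\supp(M)$ guarantees the off-$C$ part of $M$ is disjoint from the swapped-in $N$-edges. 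What the matroid route buys, beyond the bare statement, is that the linear quotient order can be taken to be the lexicographic order on the generators and that the exchange property is isolated as the one combinatorial fact needed; the cost is the reliance on the polymatroidal machinery, whereas a direct linear-quotients argument in the style of \cite{BHZ} stays entirely elementary. Either way, your argument is complete and could stand in for the citation.
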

	
	Recall that a monomial ideal has \emph{linear quotients} if its minimal generators can be ordered as $u_1\ldots,u_m$ such that the ideal
	$(u_1,\ldots,u_{j-1}):u_j$ is generated by variables for each $j=2,\ldots,m$. It is well known that if a monomial ideal generated in same degree has linear quotients, then it has linear resolution as well.

\medskip
Now we prove the proposed upper bound in Question~\ref{upper} for $k=2$. To prove this result, the following lemmata are required. For the rest of this section we adopt the convention that 
$\reg (I)=1$ when $I=(0)$.   

\begin{Lemma}\label{majaglassbilye}
	Let $(ab)$ be an edge of $G$. Then
	\[I(G)^{[2]}:ab =I(G-\{a,b\})+(cd : c\in N_G(a),\, d\in N_G(b),\, c\neq d \text{ and } c,d\notin \{a,b\}).\]
	In particular, $I(G)^{[2]}:ab$ is the edge ideal of a graph.  
\end{Lemma}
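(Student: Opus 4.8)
The plan is to use the elementary formula for colon ideals of monomial ideals: if $I=(m_1,\dots,m_t)$ is a monomial ideal and $w$ a monomial, then $I:w=(m_i/\gcd(m_i,w):1\le i\le t)$. I would apply this with $I=I(G)^{[2]}$, which is generated by the monomials $u_M=ef$ for $M=\{e,f\}$ a $2$-matching of $G$ (i.e.\ $e$ and $f$ disjoint edges of $G$), and with $w=ab$. This reduces the whole statement to understanding the monomial $u_M/\gcd(u_M,ab)$ for each $2$-matching $M$, and I would organize that computation according to how the set $\{a,b\}$ meets $e\cup f$.

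The case analysis is the core of the argument; fix a $2$-matching $M=\{e,f\}$. (i) If $\{a,b\}\cap(e\cup f)=\emptyset$, then $\gcd(u_M,ab)=1$ and the quotient is $ef$; since $e$ (and $f$) avoid $a$ and $b$ we have $e\in E(G-\{a,b\})$, so $ef\in I(G-\{a,b\})$. (ii) If $a$ and $b$ lie in a common edge of $M$, that edge must be $(ab)$ itself and the quotient is the remaining edge $f$, which avoids $a,b$ and hence lies in $I(G-\{a,b\})$. (iii) If exactly one of $a,b$ lies in $e\cup f$ — say, after swapping $e$ and $f$ if needed, $a\in e=\{a,c\}$ and $b\notin e\cup f$ — then $b$ does not divide $ef$, so $\gcd(u_M,ab)=a$ and the quotient is $ef/a=cf$, a multiple of the edge $f\in E(G-\{a,b\})$, hence in $I(G-\{a,b\})$; the subcase where only $b$ is present is symmetric. (iv) Finally, if $a$ and $b$ lie in different edges of $M$, write $e=\{a,c\}$ and $f=\{b,d\}$ with $c\in N_G(a)$, $d\in N_G(b)$; disjointness of $e$ and $f$ together with $a\ne b$ forces $c,d\notin\{a,b\}$ and $c\ne d$, while $\gcd(u_M,ab)=ab$ gives quotient $cd$, which is exactly a generator of the second type in the statement. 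Combining the four cases gives $I(G)^{[2]}:ab\subseteq I(G-\{a,b\})+(cd:c\in N_G(a),\,d\in N_G(b),\,c\ne d,\,c,d\notin\{a,b\})$.

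For the reverse inclusion I would check the two families of right-hand generators directly. If $(pq)\in E(G-\{a,b\})$, then $p,q\notin\{a,b\}$, so $\{(pq),(ab)\}$ is a $2$-matching and $(pq)(ab)\in I(G)^{[2]}$, whence $(pq)\in I(G)^{[2]}:ab$. If $c\in N_G(a)$, $d\in N_G(b)$ with $c\ne d$ and $c,d\notin\{a,b\}$, then $(ac)$ and $(bd)$ are edges of $G$, disjoint because $a\ne b$, $c\ne d$ and $c,d\notin\{a,b\}$; hence $(cd)(ab)=(ac)(bd)\in I(G)^{[2]}$ and $(cd)\in I(G)^{[2]}:ab$. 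This yields equality. The final assertion of the lemma is then immediate: the right-hand side is generated by squarefree monomials of degree $2$ — those defining $I(G-\{a,b\})$ together with the monomials $cd$ — so it is the edge ideal of the graph on $V(G)$ with edge set $E(G-\{a,b\})\cup\{\{c,d\}:c\in N_G(a),\,d\in N_G(b),\,c\ne d,\,c,d\notin\{a,b\}\}$.

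I do not expect a genuine conceptual obstacle; the only delicate point is the bookkeeping in the case analysis — making sure that the degenerate situations (the edge $(ab)$ occurring inside $M$, or an endpoint $c$ of $e$ coinciding with $a$ or $b$) land in the correct case, and that in cases (i)--(iii) the quotient really belongs to $I(G-\{a,b\})$ rather than producing a spurious generator.
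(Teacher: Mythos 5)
Your proof is correct and takes essentially the same route as the paper's: both inclusions reduce to analyzing how the pair $\{a,b\}$ meets the two edges of a $2$-matching, and your use of the standard colon formula $I:w=\bigl(m_i/\gcd(m_i,w)\bigr)$ simply makes explicit the case analysis that the paper's terse proof leaves implicit. No gaps.
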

\begin{proof}
	It is clear that $I(G)^{[2]}:ab$ contains the ideal on the right-hand side. To see the other containment, 
	let $u$ be a monomial in $I(G)^{[2]}:ab$. Then there exist variables $x,y$ different from $a,b$ with $x\neq y$ dividing $u$ such that either $(xy)(ab)$ or $(xa)(yb)$ is a $2$-matching of $G$. This implies that $u$ belong to the right-hand side. 
\end{proof}

The following lemma is well known, see for example \cite[Lemma~2.10]{DHS}. 

\begin{Lemma}\label{afternoon}
	Let $I$ be a monomial ideal and let $u$ be a monomial of degree $d$. Then
	\begin{itemize}
		\item[(i)] $\reg(I)\leq \max\{\reg(I:u)+d, \reg(I,u)\}$,
		\item[(ii)] for any variable $x$, $\reg(I,x)\leq \reg(I).$
	\end{itemize}
\end{Lemma}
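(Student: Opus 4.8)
The plan is to derive both inequalities from the short exact sequence attached to the colon and quotient by $u$, treating (i) as a formal consequence and reducing (ii) to the restriction lemma. For part (i), I would start from the short exact sequence of graded $S$-modules
\[
0\To (S/(I:u))(-d)\xrightarrow{\ \cdot u\ }S/I\To S/(I,u)\To 0,
\]
whose first nonzero map is multiplication by $u$ and whose twist $(-d)$ records $\deg u=d$. The standard behaviour of Castelnuovo--Mumford regularity along a short exact sequence $0\to A\to B\to C\to 0$ gives $\reg(B)\le\max\{\reg(A),\reg(C)\}$; with $B=S/I$, $A=(S/(I:u))(-d)$ and the identity $\reg(M(-d))=\reg(M)+d$ this yields $\reg(S/I)\le\max\{\reg(S/(I:u))+d,\ \reg(S/(I,u))\}$. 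Adding $1$ and using $\reg(J)=\reg(S/J)+1$ for every homogeneous ideal $J$ (valid for $J=(0)$ under the stated convention, the cases $u\in I$ and $\deg u=0$ being trivial) gives precisely the inequality in (i).

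For part (ii), I would reduce immediately to the case where $I$ is squarefree --- which is in any case the only situation needed in this paper --- the general case following by polarization, since polarization preserves regularity and, after a harmless relabelling of variables, is compatible with the operation $I\mapsto(I,x)$. So assume $I$ squarefree and, renumbering if necessary, $x=x_n$, and put $m=\prod_{i\neq n}x_i$. A minimal generator of $I$ not divisible by $x_n$ is exactly a minimal generator of $I$ dividing $m$, so $(I,x_n)=I^{\leq m}+(x_n)$. Since no minimal generator of $I^{\leq m}$ involves $x_n$, the linear form $x_n$ is a nonzerodivisor on $S/I^{\leq m}$, hence $\reg\bigl((I,x_n)\bigr)=\reg\bigl(S/I^{\leq m}\bigr)+1=\reg\bigl(I^{\leq m}\bigr)$. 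Finally, the restriction lemma (Lemma~\ref{restrict}) gives $b_{i,\ab}(I^{\leq m})\le b_{i,\ab}(I)$ for all $i$ and all $\ab$, so that $\reg(I^{\leq m})\le\reg(I)$, which is exactly what is wanted.

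The genuinely routine parts are (i) and the squarefree case of (ii): the former is formal, the latter rests only on the restriction lemma already available. The one point deserving care is the reduction, in (ii), from squarefree ideals to arbitrary monomial ideals: one must check that polarizing $I$ and then adjoining a single variable splitting $x$ agrees, up to relabelling of the ``slot'' variables, with polarizing $(I,x)$ --- short but slightly fussy. The cleanest option may be simply to cite \cite{DHS} for the general statement and retain only the squarefree version, which suffices for all uses in this paper.
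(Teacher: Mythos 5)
The paper does not prove this lemma at all: it simply records it as well known and cites \cite[Lemma~2.10]{DHS}, so your write-up supplies an argument where the authors rely on a reference. Your proof is correct. Part (i) is the standard argument: the short exact sequence $0\to (S/(I:u))(-d)\to S/I\to S/(I,u)\to 0$ is the right one (the kernel of multiplication by $u$ on $S/I$ is exactly $(I:u)/I$), the inequality $\reg(B)\le\max\{\reg(A),\reg(C)\}$ is the standard behaviour along short exact sequences, and the passage from $\reg(S/J)$ to $\reg(J)$ is compatible with the paper's convention $\reg((0))=1$. Part (ii) is where you genuinely add something: identifying $(I,x_n)$ with $I^{\leq m}+(x_n)$ for $m=\prod_{i\neq n}x_i$, peeling off $x_n$ as a linear nonzerodivisor on $S/I^{\leq m}$, and then invoking the restriction lemma (Lemma~\ref{restrict}) is a clean, self-contained route that keeps the proof inside the toolkit the paper has already set up; it also quietly handles the degenerate case $I^{\leq m}=(0)$ via the stated convention. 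Your caveat about the polarization step is the only delicate point, and your check that polarizing $(I,x)$ agrees, after relabelling slot variables, with adjoining $x^{\mathrm{pol}}$ to $I^{\mathrm{pol}}$ is what makes the general case go through; as you note, every application in this paper is to squarefree ideals, so one could equally well state and prove only the squarefree version, or retain the citation to \cite{DHS} for the general statement.
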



\begin{Lemma}\label{disjoint}
	Let $I$ and $J$ be monomial ideals generated in disjoint sets of variables. Then $\reg(I+J)=\reg(I)+\reg(J)-1$.
\end{Lemma}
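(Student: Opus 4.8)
The plan is to reduce the statement to a computation of graded Betti numbers via a tensor product of resolutions. Write $I\subset S'=K[x_1,\dots,x_m]$ and $J\subset S''=K[y_1,\dots,y_n]$, where the two listed variable sets are disjoint and are exactly the variables occurring in (the generators of) $I$ and $J$, respectively; since adjoining variables that appear in no generator changes no graded Betti number, we may assume the ambient ring is $S=S'\tensor_K S''=K[x_1,\dots,x_m,y_1,\dots,y_n]$. The key observation is the ring isomorphism $S/(I+J)\iso (S'/I)\tensor_K(S''/J)$, which allows one to build a free resolution of $S/(I+J)$ out of free resolutions of the two factors.

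Concretely, let $\FF$ be the minimal graded free resolution of $S'/I$ over $S'$ and $\GG$ that of $S''/J$ over $S''$; both are finite by Hilbert's syzygy theorem. Form the graded tensor product complex $\FF\tensor_K\GG$, whose term in homological degree $n$ is $\Dirsum_{i+j=n}F_i\tensor_K G_j$; each $F_i\tensor_K G_j$ is a free $S$-module (a direct sum of shifted copies of $S'\tensor_K S''=S$). By the Künneth formula over the field $K$ its homology is $H_\bullet(\FF)\tensor_K H_\bullet(\GG)$, which is concentrated in homological degree $0$ and equals $(S'/I)\tensor_K(S''/J)=S/(I+J)$; hence $\FF\tensor_K\GG$ is a free $S$-resolution of $S/(I+J)$. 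It is moreover minimal: every entry of its differential $d_\FF\tensor 1\pm 1\tensor d_\GG$ is an entry of $d_\FF$ or of $d_\GG$, hence lies in $(x_1,\dots,x_m)$ or $(y_1,\dots,y_n)$, and in particular in the graded maximal ideal $\mm$ of $S$. Comparing graded ranks yields
\[
b_{i,j}\bigl(S/(I+J)\bigr)=\sum_{\substack{i'+i''=i\\ j'+j''=j}} b_{i',j'}(S'/I)\,b_{i'',j''}(S''/J)\qquad\text{for all }i,j.
\]

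From here the argument is formal. All numbers on the right are non-negative, so there is no cancellation: $b_{i,j}(S/(I+J))\neq 0$ if and only if there exist $i',i'',j',j''$ with $i'+i''=i$, $j'+j''=j$, $b_{i',j'}(S'/I)\neq 0$ and $b_{i'',j''}(S''/J)\neq 0$. Writing $j-i=(j'-i')+(j''-i'')$ and maximizing over the nonvanishing Betti numbers gives $\reg\bigl(S/(I+J)\bigr)=\reg(S'/I)+\reg(S''/J)$. Using $\reg(\aa)=\reg(S/\aa)+1$ for a homogeneous ideal $\aa$ — consistent with the convention $\reg((0))=1$ since $\reg(S)=0$ — we obtain
\[
\reg(I+J)=\reg\bigl(S/(I+J)\bigr)+1=\bigl(\reg(I)-1\bigr)+\bigl(\reg(J)-1\bigr)+1=\reg(I)+\reg(J)-1,
\]
as claimed.

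The only genuine points of care — hence the main obstacle, modest as it is — are the two properties of $\FF\tensor_K\GG$: exactness, which is the Künneth theorem over a field (no $\Tor$ correction terms arise), and minimality, which is the remark above that the differential has all entries in $\mm$. The degenerate cases are automatically included: if, say, $I=(0)$, then $\FF$ is just $S'$ in homological degree $0$, $\reg(S'/I)=\reg(S')=0$, and the formula reduces to $\reg(I+J)=\reg(J)$, in agreement with $\reg((0))=1$.
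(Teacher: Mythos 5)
Your proof is correct and follows exactly the route the paper indicates: the paper disposes of this lemma in one sentence by citing the fact that the minimal graded free resolution of $S/(I+J)$ is the tensor product of the minimal resolutions of $S/I$ and $S/J$, which is precisely the fact you establish (via K\"unneth and the minimality check) and then exploit through the resulting Betti number formula. Your write-up simply supplies the details the paper leaves implicit.
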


Lemma~\ref{disjoint} is a simple consequence of the fact that the minimal graded free resolution of $S/(I+J)$ is isomorphic to the tensor product of the minimal graded free resolutions of $S/I$ and $S/J$.  

\begin{Lemma}\label{separate case}
	Let $(ab)$ be an edge of $G$. Suppose that $N_G(x)\subseteq \{a,b\}$ for each $x\in (N_G(a)\cup N_G(b))\setminus \{a,b\}.$
	Then $\reg (I(G)^{[2]}:ab)\leq \nu(G)$.
\end{Lemma}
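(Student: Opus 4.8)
The plan is to use Lemma~\ref{majaglassbilye} to get an explicit description of $I(G)^{[2]}:ab$ and then analyze its structure under the hypothesis. Write $A = N_G(a)\setminus\{a,b\}$ and $B = N_G(b)\setminus\{a,b\}$. By Lemma~\ref{majaglassbilye},
\[
I(G)^{[2]}:ab = I(G-\{a,b\}) + (cd : c\in A,\, d\in B,\, c\neq d).
\]
The hypothesis says that every vertex of $A\cup B$ has all its neighbors inside $\{a,b\}$; hence in $G-\{a,b\}$ the vertices of $A\cup B$ are isolated, so they contribute no edges to $I(G-\{a,b\})$. Thus $I(G-\{a,b\})$ is generated in the variables $V(G)\setminus(\{a,b\}\cup A\cup B)$, while the extra generators $cd$ live in the variables $A\cup B$. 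First I would check that these two variable sets are disjoint: a vertex cannot lie both in $A\cup B$ and in the support of an edge of $G-\{a,b\}$, precisely because of the neighbor condition. So the colon ideal is a sum $I_1 + I_2$ of monomial ideals in disjoint variable sets, where $I_1 = I(G-\{a,b\})$ restricted to those edges not meeting $A\cup B$ (equivalently $I(G - (\{a,b\}\cup A\cup B))$), and $I_2 = (cd : c\in A, d\in B, c\neq d)$.

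Next I would apply Lemma~\ref{disjoint} to get $\reg(I(G)^{[2]}:ab) = \reg(I_1) + \reg(I_2) - 1$ (with the convention $\reg(0)=1$, which makes this formula behave correctly even when one summand vanishes). Now $I_2$ is (up to relabeling) the edge ideal of a complete bipartite graph on parts $A\setminus B$, $B\setminus A$, together with possibly a loop-free complete graph coming from $A\cap B$ — more carefully, $I_2$ is the edge ideal of the graph on $A\cup B$ with an edge between $c$ and $d$ whenever $c\in A$, $d\in B$, $c\neq d$. This graph has a very small matching number: any matching of it has at most... here one observes that $I_2$ is cogenerated in a way that its matching number is at most $\min$-type quantity, but the clean bound is that $\reg(I_2) \le 2$ unless $I_2 = 0$, since the complement of such a graph is chordal (it's a complete multipartite-like graph, whose complement is a disjoint union of cliques, hence chordal), so by Fröberg's theorem $I_2$ has linear resolution and $\reg(I_2) = 2$ when $I_2\neq 0$.

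Then I would bound $\reg(I_1) = \reg(I(G-(\{a,b\}\cup A\cup B)))$. Since this is again an edge ideal, Theorem~\ref{established} (the case $k=1$) gives $\reg(I_1) \le \nu(G') + 1$ where $G' = G - (\{a,b\}\cup A\cup B)$, and $\reg(I_1)=1$ if $G'$ has no edges. Putting the pieces together: if both $I_1, I_2$ are nonzero, $\reg(I(G)^{[2]}:ab) \le (\nu(G')+1) + 2 - 1 = \nu(G') + 2$. The final step, which I expect to be the main obstacle, is the combinatorial inequality $\nu(G') + 2 \le \nu(G)$ — that is, the edge $(ab)$ together with one edge from $I_2$ (an edge $cd$ with $c\in A$, $d\in B$) together with a maximum matching of $G'$ forms a matching of $G$ of size $\nu(G')+2$. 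This requires that $A$ and $B$ both be nonempty and not equal to a single shared vertex; the edge cases (when $I_2 = 0$, or $A\cup B = \emptyset$, or $A = B = \{$one vertex$\}$) must be handled separately, and in each of those one checks directly that $\reg(I(G)^{[2]}:ab) \le \nu(G)$, typically because the $+2$ drops to $+1$ or the colon ideal is simply $I(G')$ with $\nu(G') + 1 \le \nu(G)$ coming from adding back the edge $(ab)$.
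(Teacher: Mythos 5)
Your plan is essentially the paper's own proof: the same decomposition of $I(G)^{[2]}:ab$ via Lemma~\ref{majaglassbilye} into two edge ideals on disjoint variable sets (the hypothesis makes $N_G(a)\cup N_G(b)$ a connected component, so your $G'$ is exactly the paper's $G_1$), the same application of Lemma~\ref{disjoint}, Fr\"oberg's theorem for the piece $I_2$ whose complement is a disjoint union of cliques, Theorem~\ref{established} for $I_1$, and the same case split according to whether $I_2=(0)$. The one slip is in your justification of $\nu(G)\ge\nu(G')+2$: a generator $cd$ of $I_2$ with $c\in A$, $d\in B$ is generally \emph{not} an edge of $G$, so the $2$-matching to adjoin to a maximum matching of $G'$ is $\{(ac),(bd)\}$ rather than $\{(ab),(cd)\}$; this yields the same inequality and the argument goes through.
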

\begin{proof}
	Let $H$ be the graph whose edge ideal is $I(G)^{[2]}:ab$.
	By our assumption, the induced subgraph of $G$ on $N_G(a)\cup N_G(b)$ is a connected component of $G$. Therefore, $G$ is the disjoint union of $G_{N_G(a)\cup N_G(b)}$ and some subgraph, say $G_1$. Observe that $\nu(G_{N_G(a)\cup N_G(b)})\leq2$ and $\nu(G)=\nu(G_1)+\nu(G_{N_G(a)\cup N_G(b)})$. Let $H_1$ be the graph defined by $I(H_1):=(xy : x\in N_G(a),\, y\in N_G(b),\, x\neq y \text{ and } x,y\notin \{a,b\})$. By Lemma~\ref{majaglassbilye}, we get 
	\[I(H)=I(G_1)+I(H_1)\]
	where the ideals $I(G_1)$ and $I(H_1)$ are generated in disjoint sets of variables. Therefore, by Lemma~\ref{disjoint} we have
	\begin{equation}\label{eq:disjoint}
	\reg(I(H))=\reg(I(G_1))+\reg(I(H_1))-1. 
	\end{equation}
	Observe that $H_1^c$ is a disjoint union of some  complete graphs. Therefore by \cite[Theorem~1]{F} we have $\reg(I(H_1))\leq 2$. Using Eq.\eqref{eq:disjoint} and Theorem~\ref{established}, we get
	\[\reg(I(H))\leq \nu(G_1)+1 + \reg(I(H_1))-1 =\nu(G_1)+ \reg(I(H_1)).\]
	If $\nu(G_{N_G(a)\cup N_G(b)})=2$, then the result follows. Next, assume that $\nu(G_{N_G(a)\cup N_G(b)})=1$. This implies that $H_1$ has no edges. Indeed, if $(xy)$ is an edge of $H_1$ for some $x\in N_G(a)$ and $y\in N_G(b)$, then the edges $(xa)$ and $(yb)$ provide a 2-matching in $G_{N_G(a)\cup N_G(b)}$, a contradiction. Thus, by our convention we have $\reg(I(H_1))= 1$, which together with Eq.\eqref{eq:disjoint} and Theorem~\ref{established} yield the desired result.   
\end{proof}

In the next lemma, we show that the statement of Lemma~\ref{separate case} remains valid without any extra assumption on the choice of the edge $(ab)$.  

\begin{Lemma}\label{colon}
	Let $(ab)$ be an edge of $G$. Then $\reg(I(G)^{[2]}:ab)\leq \nu(G)$.
\end{Lemma}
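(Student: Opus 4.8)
The idea is to reduce the general case to the special configuration handled in Lemma~\ref{separate case} by passing to an induced subgraph. Write $I=I(G)$ and set $W=(N_G(a)\cup N_G(b))\setminus\{a,b\}$. The obstruction to applying Lemma~\ref{separate case} directly is that some vertex $x\in W$ may have neighbours outside $\{a,b\}$. To kill those extra edges, I would consider the induced subgraph $G'=G_{N_G[a]\cup N_G[b]}$ obtained by restricting $G$ to the closed neighbourhoods of $a$ and $b$. In $G'$, every vertex of $W$ has all its neighbours inside $N_G[a]\cup N_G[b]$, but this is still not quite the hypothesis of Lemma~\ref{separate case}, which demands $N_{G'}(x)\subseteq\{a,b\}$. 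So one more restriction is needed: delete from $G'$ every edge joining two vertices of $W$, and every edge joining $a$ (resp. $b$) to a vertex of $W$ that is \emph{not} forced. A cleaner route: observe that the colon ideal $I(G)^{[2]}:ab$, by Lemma~\ref{majaglassbilye}, depends only on $N_G(a)$, $N_G(b)$ and the induced subgraph $G-\{a,b\}$; so I first want to isolate the vertices that actually matter.

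\textbf{Key steps.} First, by Lemma~\ref{majaglassbilye}, $I(G)^{[2]}:ab=I(G-\{a,b\})+(cd: c\in N_G(a),\,d\in N_G(b),\,c\neq d,\,c,d\notin\{a,b\})$. Now let $H$ be the graph on $V(G)$ with this edge ideal. The edges of $H$ fall into two types: ``old'' edges of $G-\{a,b\}$, and ``new'' edges $cd$ with $c\in N_G(a)$, $d\in N_G(b)$. I would then build an auxiliary graph $\widetilde{G}$ from $G$ as follows: keep all vertices and edges of $G$, but additionally, for the analysis, note that passing to $\widetilde{G}=G_{W\cup\{a,b\}}\sqcup (G-(W\cup\{a,b\}))$ does not change $I(G)^{[2]}:ab$ restricted to the relevant variables — because the generators of the colon ideal only involve variables in $W$ (the new edges) together with whatever old edges of $G-\{a,b\}$ survive. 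The crucial reduction is: it suffices to treat the case where $N_G(x)\subseteq\{a,b\}$ for all $x\in W$, because removing an edge $xy$ with $x\in W$ only shrinks $H$ (removes an ``old'' generator), hence by Corollary~\ref{restriction} can only decrease the regularity, while $\nu$ of the ambient graph $G$ only decreases too — wait, that inequality goes the wrong way. The right move instead: I would induct on the number of ``offending'' edges (edges of $G$ incident to $W$ but not to $\{a,b\}$). Pick such an edge $f=xy$ with $x\in W$. Using Lemma~\ref{afternoon}(i) on the ideal $I(H)$ with the variable, or rather comparing $I(G)^{[2]}:ab$ with $I(G-f)^{[2]}:ab$: here $H$ and the analogous graph $H'$ for $G-f$ differ by exactly the edge $f$ (if $f\in G-\{a,b\}$) — but $f$ could be recreated as a ``new'' edge if $x\in N_G(a)$ and $y\in N_G(b)$; if not, $I(H')=I(H)-(xy)$ as a generator set. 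Then apply Lemma~\ref{afternoon}(i): $\reg(I(H))\le\max\{\reg(I(H):xy)+2,\reg(I(H),xy)\}=\max\{\reg(I(H):xy)+2,\reg(I(H'))\}$, and control each piece by induction plus $\nu(G-f)\le\nu(G)$.

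\textbf{Main obstacle.} The delicate point is the bookkeeping between $G$ and $G-f$: deleting an edge $f$ from $G$ can decrease $\nu(G)$, which pushes the target bound $\nu(G-f)\le\nu(G)$ in the favourable direction, but one must ensure the ``new'' edges of the colon ideal are not destroyed in a way that breaks the inductive hypothesis, and that the colon term $\reg(I(H):xy)+2$ stays $\le\nu(G)$ — this requires a second, nested colon computation à la Lemma~\ref{majaglassbilye} and a careful matching argument showing that a $2$-matching living in the doubly-colonned ideal plus the chosen edges assembles into a matching of $G$ of the right size. I expect this matching-theoretic accounting — verifying that every time we strip an edge and take a colon we ``pay'' for it with a correspondingly larger matching in $G$ — to be the heart of the argument, essentially reproving a squarefree analogue of the $k=1$ bound $\reg(I(G))\le\nu(G)+1$ relativised to the neighbourhood structure of $(ab)$, and then the base case is exactly Lemma~\ref{separate case}.
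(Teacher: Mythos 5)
Your overall skeleton (describe the colon ideal via Lemma~\ref{majaglassbilye}, use Lemma~\ref{separate case} as the base case, and induct using Lemma~\ref{afternoon}) is the same as the paper's, but the inductive step you propose breaks down at two concrete points. First, the identity $\reg(I(H),xy)=\reg(I(H'))$ is false: since $f=xy$ is an edge of $G-\{a,b\}$, the monomial $xy$ is already a generator of $I(H)$ by Lemma~\ref{majaglassbilye}, so $(I(H),xy)=I(H)$, while $I(H')$ is the strictly smaller ideal obtained by \emph{dropping} that generator. Lemma~\ref{afternoon}(i) then reads $\reg(I(H))\le\max\{\reg(I(H):xy)+2,\reg(I(H))\}$, which is vacuous; there is no deletion--contraction inequality of the form $\reg(I(H))\le\max\{\reg(I(H-f)),\dots\}$ available here. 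Second, even granting some version of the recursion, the colon branch needs $\reg(I(H):xy)+2\le\nu(G)$, i.e.\ (via Theorem~\ref{established}) a matching of $G$ exceeding a maximum matching of the doubly restricted graph by \emph{three} disjoint edges; the natural candidates $(ab)$ and $(xy)$ supply only two, leaving you off by one, and a maximum matching of $H$ may use several ``new'' edges $cd$ that are not edges of $G$ and cannot all be traded for $2$-matchings through $a$ and $b$. You flag exactly this accounting as ``the heart of the argument'' but do not carry it out, and as set up it does not close.

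The paper avoids both problems by inducting on the number of vertices and colonning by the single \emph{variable} $x$ (a neighbour of $a$ with a neighbour $y\notin\{a,b\}$) rather than by the edge $xy$: the cost in Lemma~\ref{afternoon} is then only $+1$, the sum branch satisfies $I(H-\{x\})=I(G-\{x\})^{[2]}:ab$ and is handled by the induction hypothesis, and in the colon branch every ``new'' edge of $H$ meets $N_G(b)\subseteq N_H[x]$, so $H-N_H[x]=G-(N_G[b]\cup N_G[x])$ is an honest induced subgraph of $G$ whose matchings extend by the two disjoint edges $(ab)$ and $(xy)$, giving $\reg(I(H-N_H[x]))+1\le(\nu(G)-2)+1+1=\nu(G)$. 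If you want to salvage your plan, replace the edge $xy$ by the vertex $x$ in the recursion; as written, the argument has a genuine gap.
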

\begin{proof}
	We proceed by induction on the number of vertices of the graph. If the graph $G$ is just the edge $(ab)$, then the result follows from our convention about the zero ideal. So, suppose that $G$ is not just an edge, and let $H$ be the graph such that $I(H)=I(G)^{[2]}:ab$. Now we distinguish the following cases: 
	
	\medskip
	Case~1:
	Suppose that $N_G(a)\cup N_G(b)=\{a,b\}$. Then $H=G-\{a,b\}$ and $\nu(G)= \nu(H)+1$. Therefore, by Theorem~\ref{established}, we obtain $\reg(I(H))\leq \nu(H)+1=\nu(G)$.
	
	\medskip 
	Case~2: Suppose that $N_G(a)\cup N_G(b)\neq\{a,b\}$. By Lemma~\ref{separate case}, we may assume that there exists a vertex $x \in (N_G(a)\cup N_G(b))\setminus \{a,b\}$ such that $N_G(x)\nsubseteq \{a,b\}$. We may assume that $(ax)$ is an edge of $G$, and let $y\in N_G(x)$ with $y\notin \{a,b\}$. By Lemma~\ref{afternoon}, we have
	\[\reg(I(H))\leq \max\{\reg(I(H-\{x\})),\, \reg(I(H-N_H[x]))+1 \}.\]
	Then, it remains to show that $\reg(I(H-\{x\}))\leq \nu(G)$ and $\reg(I(H-N_H[x]))\leq \nu(G)-1$. Observe that
	\[I(H-\{x\})=I(G-\{x\})^{[2]}:ab\]
	and thus by induction hypothesis
	\[\reg(I(H-\{x\}))\leq \nu(G-\{x\})\leq \nu(G).\]
	Notice that $H-N_H[x]=G-(N_G[b]\cup N_G[x])$ and any matching of $G-(N_G[b]\cup N_G[x])$ can be extended to a matching of $G$ by adding the edges $(ab)$ and $(xy)$. Therefore, $\nu(H-N_H[x])+2\leq \nu(G)$. Since $\reg(I(H-N_H[x]))\leq \nu(H-N_H[x])+1$ by Theorem~\ref{established}, the result follows.
\end{proof}

\begin{Lemma}\label{generated-by-variables}
	Let $G$ be a graph with $I(G)=(e_1,\ldots, e_r)$. Then for every $i>1$
	\[(I(G)^{[2]},e_1,\ldots ,e_{i-1}):e_i = (I(G)^{[2]}:e_i)+J_i\]
	where either $J_i=(0)$ or $J_i$ is an ideal generated by some variables.
\end{Lemma}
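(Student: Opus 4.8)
The plan is to compute the colon ideal $(I(G)^{[2]},e_1,\dots,e_{i-1}):e_i$ directly, using the fact that for monomial ideals $A$ and $B$ one has $(A+B):u = (A:u)+(B:u)$. Applying this with $A=I(G)^{[2]}$ and $B=(e_1,\dots,e_{i-1})$ gives
\[
(I(G)^{[2]},e_1,\dots,e_{i-1}):e_i \;=\; \bigl(I(G)^{[2]}:e_i\bigr) + \bigl((e_1,\dots,e_{i-1}):e_i\bigr),
\]
so we may take $J_i = (e_1,\dots,e_{i-1}):e_i$ and the only thing to verify is that $J_i$ is either $(0)$ or generated by variables. So the whole lemma reduces to a statement about the edge ideal $I(G)$ itself, namely that $(e_1,\dots,e_{i-1}):e_i$ is generated by variables — this is just the observation that $I(G)$ has linear quotients with respect to \emph{any} ordering of its generators, because $I(G)$ is generated in degree $2$.

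First I would write $e_i = x_a x_b$ and, for each $j<i$, write $e_j = x_c x_d$. A generator of $(e_j):e_i$ is $e_j/\gcd(e_j,e_i)$. If $e_j$ and $e_i$ share no vertex, this generator has degree $2$; if they share exactly one vertex, say $c=a$, the generator is the single variable $x_d$; they cannot share both vertices since the $e_j$ are distinct. So at first glance $(e_1,\dots,e_{i-1}):e_i$ could have degree-$2$ generators. The key point is that any such degree-$2$ generator $x_c x_d$ (coming from an edge $e_j$ disjoint from $e_i$) is a multiple of a variable already in the ideal: indeed, since $G$ is a graph, for the degree-$2$ generator $x_c x_d$ to be \emph{minimal} in $(e_1,\dots,e_{i-1}):e_i$ we would need no edge among $e_1,\dots,e_{i-1}$ meeting $e_i$ in exactly one vertex and contained in $\{c,d,a,b\}$; but this need not be automatic, so one must be slightly careful.

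The cleanest way around this is to note that we are free to choose the enumeration $e_1,\dots,e_r$ of $G(I(G))$ \emph{at the outset} — or, more honestly, that the statement is about the ideal $(I(G)^{[2]}:e_i)+J_i$ and the degree-$2$ part of $(e_1,\dots,e_{i-1}):e_i$ is harmless because it is already absorbed into $I(G)^{[2]}:e_i$. Concretely: if $x_cx_d$ is a generator of $(e_j):e_i$ with $e_j=x_cx_d$ disjoint from $e_i=x_ax_b$, then $\{e_i,e_j\}$ is a $2$-matching of $G$, so $x_ax_bx_cx_d \in I(G)^{[2]}$, i.e. $x_cx_d \in I(G)^{[2]}:e_i$. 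Hence every degree-$2$ generator of $(e_1,\dots,e_{i-1}):e_i$ already lies in $I(G)^{[2]}:e_i$, and so
\[
\bigl(I(G)^{[2]}:e_i\bigr) + \bigl((e_1,\dots,e_{i-1}):e_i\bigr) \;=\; \bigl(I(G)^{[2]}:e_i\bigr) + J_i,
\]
where $J_i$ is generated by those variables $x_d$ arising from edges $e_j$ ($j<i$) meeting $e_i$ in exactly one vertex; if there are no such edges among $e_1,\dots,e_{i-1}$ then $J_i=(0)$. The main (and only) obstacle is bookkeeping the three cases for $e_j\cap e_i$ and making sure the degree-$2$ case is correctly routed into $I(G)^{[2]}:e_i$ rather than into $J_i$; there is no real difficulty beyond that.
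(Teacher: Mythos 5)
Your proposal is correct and follows essentially the same route as the paper: split the colon ideal over the generators $e_1,\dots,e_{i-1}$, observe that an $e_j$ meeting $e_i$ in a vertex contributes a variable to $J_i$, while an $e_j$ disjoint from $e_i$ forms a $2$-matching with $e_i$ and is therefore already a generator of $I(G)^{[2]}:e_i$. The middle detour about minimal degree-$2$ generators is unnecessary but does no harm, since you resolve it exactly as the paper does.
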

\begin{proof}
	For any $j<i$, if $e_j$ and
	$e_i$ have a common vertex, then $e_j:e_i$ is a variable generator of $J_i$. Otherwise $e_j$ is a generator of $I(G)^{[2]}:e_i$.
\end{proof}

Now we are ready to prove our proposed upper bound for the second squarefree power of edge ideals.  

\begin{Theorem}
	Let $G$ be a graph. Then $\reg(I(G)^{[2]})\leq 2+\nu(G)$.
\end{Theorem}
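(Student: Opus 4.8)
The plan is to prove $\reg(I(G)^{[2]}) \leq 2 + \nu(G)$ by induction on the number of edges $r$ of $G$, writing $I(G) = (e_1, \ldots, e_r)$ and peeling off the generators $e_1, \ldots, e_r$ one at a time. I would set up an ascending chain of ideals $I_j := (I(G)^{[2]}, e_1, \ldots, e_j)$ for $j = 0, 1, \ldots, r$, so that $I_0 = I(G)^{[2]}$ and $I_r = (I(G)^{[2]}, e_1, \ldots, e_r) = I(G)$, since every $e_i \in I_r$. Applying Lemma~\ref{afternoon}(i) to $I_{i-1}$ with the monomial $u = e_i$ of degree $2$ gives
\[
\reg(I_{i-1}) \leq \max\{\reg(I_{i-1} : e_i) + 2,\ \reg(I_{i-1}, e_i)\} = \max\{\reg(I_{i-1} : e_i) + 2,\ \reg(I_i)\}.
\]
Iterating this from $i = 1$ up to $i = r$ and using that $\reg(I_r) = \reg(I(G)) \leq 2 + \nu(G) - 1 = 1 + \nu(G)$ by Theorem~\ref{established} (indeed Fr\"oberg-type/linear-resolution bounds, but $\reg(I(G)^1)\le 2\cdot 1+\nu(G)-1$ is exactly Theorem~\ref{established}), the whole bound reduces to showing
\[
\reg(I_{i-1} : e_i) + 2 \leq 2 + \nu(G), \qquad \text{i.e.}\qquad \reg\big((I(G)^{[2]}, e_1, \ldots, e_{i-1}) : e_i\big) \leq \nu(G)
\]
for every $i = 1, \ldots, r$.

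For $i = 1$ this is precisely Lemma~\ref{colon}, which already gives $\reg(I(G)^{[2]} : e_1) \leq \nu(G)$. For $i > 1$, I would invoke Lemma~\ref{generated-by-variables}: it identifies
\[
(I(G)^{[2]}, e_1, \ldots, e_{i-1}) : e_i = (I(G)^{[2]} : e_i) + J_i,
\]
where $J_i$ is either $(0)$ or an ideal generated by variables. If $J_i = (0)$ we are back to Lemma~\ref{colon}. If $J_i \neq (0)$, say $J_i = (x_{t_1}, \ldots, x_{t_s})$, then adding a variable generator to an ideal cannot increase its regularity: repeated application of Lemma~\ref{afternoon}(ii) gives $\reg((I(G)^{[2]} : e_i) + J_i) \leq \reg(I(G)^{[2]} : e_i) \leq \nu(G)$, again by Lemma~\ref{colon}. (One subtlety: Lemma~\ref{afternoon}(ii) is stated for adjoining a single variable $x$ to a monomial ideal $I$, giving $\reg(I,x)\le\reg(I)$; I would apply it successively, noting that each intermediate ideal $(I(G)^{[2]}:e_i) + (x_{t_1}, \ldots, x_{t_j})$ is still a monomial ideal, so the hypothesis is met at each stage.) This disposes of every $i$, and the telescoping inequality yields $\reg(I(G)^{[2]}) \leq 2 + \nu(G)$.

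The genuine content of the argument has, in effect, been front-loaded into Lemma~\ref{colon}, whose proof in turn rests on Lemma~\ref{separate case} and the structural description of $I(G)^{[2]} : ab$ from Lemma~\ref{majaglassbilye}; within the present theorem the only thing to watch is the bookkeeping: making sure $\reg(I_r)$ is correctly identified as $\reg(I(G))$ (not something larger), that the base case of the induction — $G$ a single edge, where $I(G)^{[2]} = (0)$ and the convention $\reg(0) = 1 \le 2 + \nu(G) = 3$ applies — is handled, and that in Lemma~\ref{generated-by-variables} the ideal $J_i$ really is captured by variables so that Lemma~\ref{afternoon}(ii) applies cleanly. I expect the main (if modest) obstacle to be purely organizational: assembling the chain of $\max$-inequalities so that the $\reg(I_{i-1}:e_i)+2$ terms are all bounded by $2+\nu(G)$ simultaneously, and confirming that the induction inside Lemma~\ref{colon} (on vertex number) is logically independent of the induction on edge number used here, so there is no circularity.
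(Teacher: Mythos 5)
Your proposal is correct and follows essentially the same route as the paper: the same telescoping application of Lemma~\ref{afternoon}(i) along the chain $(I(G)^{[2]},e_1,\ldots,e_j)$, the same use of Lemma~\ref{generated-by-variables} together with Lemma~\ref{afternoon}(ii) to reduce each colon term to $\reg(I(G)^{[2]}:e_i)\leq\nu(G)$ via Lemma~\ref{colon}, and the same terminal bound $\reg(I(G))\leq\nu(G)+1$ from Theorem~\ref{established}. The only cosmetic difference is that you frame it as an induction on the number of edges, whereas the paper writes the iteration out directly; the content is identical.
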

\begin{proof}
	Let $I(G)=(e_1,\ldots ,e_r)$. First note that by Lemma~\ref{afternoon} and Lemma~\ref{generated-by-variables}, for every $i>1$ we have
	\[\reg((I(G)^{[2]},e_1,\ldots ,e_{i-1}):e_i) \leq \reg((I(G)^{[2]}:e_i)). \]
	Keeping Lemma~\ref{colon} in mind, we apply Lemma~\ref{afternoon} repeatedly and we get
	\begin{eqnarray*}
		\reg(I(G)^{[2]})&\leq & \max\{\reg(I(G)^{[2]}:e_1)+2, \reg(I(G)^{[2]},e_1)\}\\
		&\leq & \max\{\nu(G)+2,  \reg(I(G)^{[2]},e_1)\}\\
		&\leq & \max\{\nu(G)+2,\reg((I(G)^{[2]},e_1):e_2)+2, \reg(I(G)^{[2]},e_1,e_2)\}\\
		&\leq & \max\{\nu(G)+2,\reg(I(G)^{[2]},e_1,e_2)\}\\
		&\leq & \vdots\\
		&\leq & \max\{\nu(G)+2, \reg(I(G))\}\\
		&\leq & \max\{\nu(G)+2, \nu(G)+1\}=\nu(G)+2,
	\end{eqnarray*}
	where the last inequality follows from Theorem~\ref{established}.
\end{proof}

	\section{Linearly related squarefree powers}
	
	Let $G$ be a graph on $n$ vertices. In this section we want to discuss which squarefree powers $I(G)^{[k]}$ of $I(G)$ are linearly related. Unlike the ordinary powers (see~\cite[Theorem~3.1]{BHZ}), being linearly related at some squarefree power does not guarantee the same property for all squarefree powers, by 
	\cite[Lemma~5.2]{BHZ} and Theorem~\ref{highest power}. However, in the following, we show that if some squarefree power is linearly related, then all higher squarefree powers are linearly related as well.   
	
	
	\begin{Theorem}
		\label{next}
		Suppose $I(G)^{[k]}$ is linearly related. Then $I(G)^{[k+1]}$ is linearly related.
	\end{Theorem}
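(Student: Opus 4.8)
The plan is to check the criterion of Theorem~\ref{connected} for $J:=I(G)^{[k+1]}$, which is generated in degree $2(k+1)$. Since a $(k+1)$-matching $N$ has $u_N=\prod_{v\in V(N)}v$, the minimal generators of $J$ correspond bijectively to the \emph{admissible} vertex sets, i.e.\ those $U\subseteq V(G)$ with $|U|=2k+2$ for which $G_U$ has a perfect matching; two admissible sets are adjacent in $G_J$ exactly when $|U\cup U'|=2k+3$, i.e.\ $U'$ arises from $U$ by exchanging one vertex, and the vertices of $G_J^{(u_U,u_{U'})}$ are the admissible sets contained in $U\cup U'$. Thus it suffices to prove: for all admissible $(2k+2)$-sets $U,U'$ there is a chain $U=U_0,\dots,U_s=U'$ of admissible sets, each inside $U\cup U'$, with consecutive members differing by a single exchange. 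One remark is used throughout: by Corollary~\ref{restriction}, $I(H)^{[k]}$ is linearly related for \emph{every} induced subgraph $H$ of $G$, so by Theorem~\ref{connected} the $(2k)$-sets admissible for $I(H)^{[k]}$ are connected (in the same sense) inside $H$.

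Two steps are routine. \textbf{Common-edge step:} if the $(k+1)$-matchings $N,N'$ share an edge $e$, write $N=M\cup\{e\}$ and $N'=M'\cup\{e\}$; then $M,M'$ are $k$-matchings of $G-V(e)$, whose monomials (by the remark and Theorem~\ref{connected}) are joined by a path $M=M_0,\dots,M_t=M'$ in $G_{I(G-V(e))^{[k]}}^{(u_M,u_{M'})}$. Appending $e$ yields $(k+1)$-matchings $N_i:=M_i\cup\{e\}$; as $e$ is disjoint from $V(M)\cup V(M')$ one gets $\lcm(u_{N_i},u_{N_{i+1}})=e\cdot\lcm(u_{M_i},u_{M_{i+1}})$, of degree $2+(2k+1)=2(k+1)+1$, and $u_{N_i}$ divides $e\cdot\lcm(u_M,u_{M'})=\lcm(u_N,u_{N'})$; so $u_N=u_{N_0},\dots,u_{N_t}=u_{N'}$ is the required path. \textbf{Mixed-path step:} for the general case I would induct on $|V(N)\triangle V(N')|$, the base case $V(N)=V(N')$ giving $u_N=u_{N'}$. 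View $N\cup N'$ as a disjoint union of alternating cycles and paths; cycles and common edges lie inside $V(N)\cap V(N')$ and do not affect vertex sets, while every path has both endpoints in $V(N)\triangle V(N')$. If some path $P$ is \emph{mixed}, with endpoints $a\in V(N)\setminus V(N')$ and $b\in V(N')\setminus V(N)$, then replacing the $N$-edges of $P$ by its $N'$-edges inside $N$ produces a $(k+1)$-matching $N''$ with $V(N'')=(V(N)\setminus\{a\})\cup\{b\}$; hence $u_N,u_{N''}$ are adjacent in $G_J$, both supported in $V(N)\cup V(N')$, and $|V(N'')\triangle V(N')|$ has dropped by $2$, so applying induction to $(N'',N')$ finishes (its graph being an induced subgraph of $G_J^{(u_N,u_{N'})}$).

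The crux — the only place the hypothesis "$I(G)^{[k]}$ linearly related" is used beyond passing to induced subgraphs — is when $N,N'$ share no edge and $N\cup N'$ has no mixed path. Then every path component has both endpoints on one side, and (since $V(N)\neq V(N')$) there is an ``$N$-path'' $P$ with an endpoint $a\in V(N)\setminus V(N')$, matched in $N$ to some $p$, and an ``$N'$-path'' $Q$ with an endpoint $b\in V(N')\setminus V(N)$, matched in $N'$ to some $q$; the edges $\{a,p\}\in N$ and $\{b,q\}\in N'$ are disjoint. I would argue that linear relatedness of $I(G)^{[k]}$ forces $G$ to contain one of the crossing edges $\{p,b\}$, $\{q,a\}$: for $k=1$ this is gap-freeness of $G$ (which for edge ideals is equivalent to $I(G)$ being linearly related) applied to $\{a,p\}$ and $\{b,q\}$, and for general $k$ it should be extracted from Theorem~\ref{connected} applied to suitable $k$-matchings refining $\{a,p\}$ and $\{b,q\}$. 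Granting, say, $\{p,b\}\in E(G)$, the set $U'':=(V(N)\setminus\{a\})\cup\{b\}$ carries the perfect matching $(N\setminus\{\{a,p\}\})\cup\{\{p,b\}\}$, hence is admissible; then $u_N,u_{U''}$ are adjacent in $G_J^{(u_N,u_{N'})}$ and $|U''\triangle V(N')|$ has dropped, so induction again finishes (and $\{q,a\}$ is symmetric).

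The main obstacle is precisely the italicized claim above: pinning down, for general $k$, which adjacency statement about $k$-matchings follows from "$I(G)^{[k]}$ linearly related" and verifying it produces the needed crossing edge — or a usable substitute. For instance the edge furnished might be $\{a,b\}$, yielding only a move that keeps $|V(N)\triangle V(N')|$ fixed while turning unbalanced paths into mixed ones, so the induction should probably run on $|V(N)\triangle V(N')|$ refined lexicographically by the number of non-mixed path components of $N\cup N'$. Everything else is bookkeeping with degrees and supports.
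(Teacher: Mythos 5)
Your reduction to Theorem~\ref{connected}, the observation (via Corollary~\ref{restriction}) that linear relatedness of $I(G)^{[k]}$ passes to all induced subgraphs, and the common-edge and mixed-path steps are all sound. But the argument has a genuine gap, and it sits exactly where you flag it: in the case where $N$ and $N'$ share no edge and $N\cup N'$ has no mixed alternating path, you need a \emph{crossing} edge $\{p,b\}$ or $\{q,a\}$, and linear relatedness of $I(G)^{[k]}$ does not deliver one. Already for $k=1$ the hypothesis only gives gap-freeness, i.e.\ \emph{some} edge meeting both $\{a,p\}$ and $\{b,q\}$, which may well be $\{a,b\}$ or $\{p,q\}$; as you note yourself, the move this permits need not decrease $|V(N)\triangle V(N')|$ (it fails to do so precisely when the relevant alternating path has length $\geq 3$, so that $p\in V(N')$), and the proposed lexicographic refinement of the induction is not carried out. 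For general $k$ no precise adjacency statement about $k$-matchings is even formulated, so the crux step is a conjecture rather than a proof.

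The paper avoids this difficulty entirely by a different combinatorial pivot. Writing $u=(a_1b_1)\cdots(a_{k+1}b_{k+1})$ and $v=(a'_1b'_1)\cdots(a'_{k+1}b'_{k+1})$, it first proves an elementary claim: there are indices $i,j$ such that the edge $(a_ib_i)$ of $u$ is disjoint from \emph{every} edge of $v$ except possibly $(a'_jb'_j)$ (take $a_i\notin\supp(v)$, which exists since $u\neq v$, and let $(a'_jb'_j)$ be the edge of $v$ containing $b_i$ if there is one). With $i=j=1$, two applications of Theorem~\ref{connected} to $I^{[k]}$ finish the job: first connect $u'=(a_2b_2)\cdots(a_{k+1}b_{k+1})$ to $v'=(a'_2b'_2)\cdots(a'_{k+1}b'_{k+1})$ and multiply the whole path by $(a_1b_1)$ (legitimate because $\{a_1,b_1\}$ avoids $\supp(u')\cup\supp(v')$), reaching $(a_1b_1)v'$; then connect $w=(a_1b_1)(a'_2b'_2)\cdots(a'_kb'_k)$ to $w'=(a'_1b'_1)(a'_2b'_2)\cdots(a'_kb'_k)$ and multiply by $(a'_{k+1}b'_{k+1})$, reaching $v$. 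All intermediate monomials divide $\lcm(u,v)$, so the concatenation is a path in $G_{I^{[k+1]}}^{(u,v)}$. No crossing-edge lemma, no alternating-path case analysis, and the only input is the hypothesis applied to pairs of $k$-matchings that differ in a controlled way. If you want to salvage your approach you would have to prove the italicized claim; the cleaner course is to replace the alternating-path induction by the paper's ``peel off one almost-isolated edge of $u$'' device.
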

	
	\begin{proof}
		Let $I=I(G)$, and let $u, v\in G(I^{[k+1]})$ with $u\neq v$. By Theorem~\ref{connected}, we need to show that there is a path between $u$ and $v$ in the graph $G_{I^{[k+1]}}^{(u,v)}$. 
		Let
		$u=(a_1b_1)\cdots (a_{k+1} b_{k+1})$ and
		$v=(a'_1b'_1)\cdots (a'_{k+1} b'_{k+1})$.
		
		We claim that there exist indices $i$ and $j$ such that
		\[
		\{a_i,b_i\}\cap \{a'_1,b'_1,\ldots,\hat{a'_j},\hat{b'_j},\ldots, a'_{k+1},b'_{k+1}\}=\emptyset.
		\]
		Assuming the claim is proved, we find a desired path between $u$ and $v$ as follows. We may assume that $i=j=1$. By our assumption $I^{[k]}$ is linearly related. Thus Theorem~\ref{connected} implies that there is a path connecting $u'=(a_2b_2)\cdots (a_{k+1} b_{k+1})$ and
		$v'=(a'_2b'_2)\cdots (a'_{k+1} b'_{k+1})$ in $G_{I^{[k]}}^{(u',v')}$, say $u',z_1,\ldots,z_t,v'$. Then it follows from the choice of $i$ and $j$ that the sequence $u=(a_1b_1)u',(a_1b_1)z_1,\ldots,(a_1b_1)z_t,(a_1b_1)v'$ is a path connecting $u$ and $(a_1b_1)v'$ in the graph $G_{I^{[k+1]}}^{(u,(a_1b_1)v')}$. We denote this path by $P$. Now, let $w=(a_1b_1)(a'_2b'_2)\cdots (a'_{k} b'_{k})$ and $w'=(a_1'b_1')(a'_2b'_2)\cdots (a'_{k} b'_{k})$. Since $I^{[k]}$ is linearly related, Theorem~\ref{connected} implies that there is a path $w,y_1,\ldots,y_s,w'$ connecting $w$ and $w'$ in $G_{I^{[k]}}^{(w,w')}$. Then  $w(a'_{k+1}b'_{k+1}),y_1(a'_{k+1}b'_{k+1}),\ldots,y_s(a'_{k+1}b'_{k+1}),
		w'(a'_{k+1}b'_{k+1})=v$ is a path connecting $w(a'_{k+1}b'_{k+1})$ and $v$ in $G_{I^{[k+1]}}^{(w(a'_{k+1}b'_{k+1}),v)}$. We denote this path by $P'$. Finally, since $w(a'_{k+1}b'_{k+1})=(a_1b_1)v'$, we may compose the paths $P$ and $P'$ in $G_{I^{[k+1]}}^{(u,v)}$ to connect $u$ and $v$.
		
		Proof of the claim: Since $u\neq v$, there exists an $i$ such that $a_i$ or $b_i$ does not belong to the support of $v$. We may assume that $a_i\notin \supp (v)$. If $b_i\notin \supp (v)$, then we may choose any $j=1,\ldots,k+1$. On the other hand, if
		$b_i\in \supp (v)$, namely $b_i\in \{a'_\ell,b'_\ell\}$ for some
		$\ell=1,\ldots, k+1$, then we may choose $j=\ell$.
	\end{proof}
	
	By Theorem~\ref{highest power} we know that the highest nonzero squarefree power of $I(G)$, namely  $I(G)^{[\nu(G)]}$, has linear quotients. Thus, due to Theorem~\ref{next}, there exists a smallest number $\lambda(I(G))$ with the property that  $I(G)^{[k]}$ is linearly related for all $k\geq \lambda(I(G))$. In \cite[Lemma~5.2]{BHZ} it was shown that $\lambda(I(G))\geq \nu_0(G)$.  In the same paper it was conjectured that $\lambda(I(G))= \nu_0(G)$, see \cite[Conjecture~5.3]{BHZ}. However,  the following examples show that in general  $\lambda(I(G)) \neq \nu_0(G)$. Here we refer to this conjecture as the  $\nu_0$-conjecture. 
		
	For example the graph $G$ shown in Figure~\ref{counterexample} has $\nu_0(G)=3$ while our computations with CoCoA shows that $I(G)^{[3]}$ is not linearly related as its Betti diagram is displayed below.
	
	\medskip 
	{
		\begin{verbatim}
	       0     1     2    
		---------------------
		6:    14    19     6    
		7:     -     1     1     
		---------------------
		Tot:  14    20     7   
		\end{verbatim}
	}

\medskip 	
	Also, for the graph $G'$ shown in Figure~\ref{two squares} we have $\nu_0(G')=3$ and $I(G')^{[3]}$ is not linearly related as we see in its Betti diagram in the following, even though the edge ideal of each component of $G'$ and its second squarefree power have both linear resolution.

	\medskip 
	
	{
		\begin{verbatim}
		       0    1     2    
		--------------------
		6:     8    8     2    
		7:     -    1     -     
		---------------------
		Tot:  8    9     2   
		\end{verbatim}
	}  

\medskip 

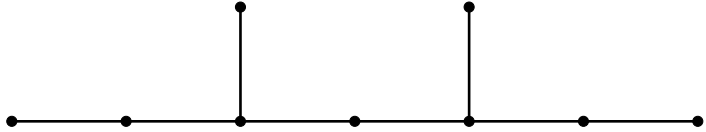
\begin{figure}[h!]
	\centering 
	\begin{tikzpicture}[scale = 0.76]
	\draw [line width=1pt] (1,7)-- (3,7);
	\draw [line width=1pt] (3,7)-- (5,7);
	\draw [line width=1pt] (5,7)-- (7,7);
	\draw [line width=1pt] (7,7)-- (9,7);
	\draw [line width=1pt] (9,7)-- (11,7);
	\draw [line width=1pt] (11,7)-- (13,7);
	\draw [line width=1pt] (5,7)-- (5,9);
	\draw [line width=1pt] (9,7)-- (9,9);
	\begin{scriptsize}
	\draw [fill=black] (1,7) circle (2.5pt);
	\draw [fill=black] (3,7) circle (2.5pt);
	\draw [fill=black] (5,7) circle (2.5pt);
	\draw [fill=black] (7,7) circle (2.5pt);
	\draw [fill=black] (9,7) circle (2.5pt);
	\draw [fill=black] (11,7) circle (2.5pt);
	\draw [fill=black] (13,7) circle (2.5pt);
	\draw [fill=black] (5,9) circle (2.5pt);
	\draw [fill=black] (9,9) circle (2.5pt);
	\end{scriptsize}
	\end{tikzpicture}
	\caption{A connected counterexample to the  $\nu_0$-conjecture}
	\label{counterexample}
\end{figure}

\medskip

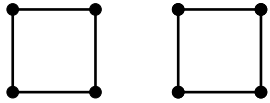
\begin{figure}[h!]
	\centering 
	\begin{tikzpicture}[scale=1.1]
	\draw [line width=1pt] (-7,3)-- (-6,3);
	\draw [line width=1pt] (-7,3)-- (-7,2);
	\draw [line width=1pt] (-6,3)-- (-6,2);
	\draw [line width=1pt] (-7,2)-- (-6,2);
	\draw [line width=1pt] (-4,3)-- (-4,2);
	\draw [line width=1pt] (-5,3)-- (-5,2);
	\draw [line width=1pt] (-5,3)-- (-4,3);
	\draw [line width=1pt] (-5,2)-- (-4,2);
	\begin{scriptsize}
	\draw [fill=black] (-7,3) circle (2pt);
	\draw [fill=black] (-6,3) circle (2pt);
	\draw [fill=black] (-6,2) circle (2pt);
	\draw [fill=black] (-7,2) circle (2pt);
	\draw [fill=black] (-4,3) circle (2pt);
	\draw [fill=black] (-4,2) circle (2pt);
	\draw [fill=black] (-5,3) circle (2pt);
	\draw [fill=black] (-5,2) circle (2pt);
	\draw [fill=black] (-5,3) circle (2pt);
	\draw [fill=black] (-4,3) circle (2pt);
	\draw [fill=black] (-5,2) circle (2pt);
	\draw [fill=black] (-4,2) circle (2pt);
	\end{scriptsize}
	\end{tikzpicture}
	\caption{A disconnected counterexample to the  $\nu_0$-conjecture}
	\label{two squares}
\end{figure}

	
	The next result shows that the $\nu_0$-conjecture holds when $\nu_0(G)=2$.
	
	\begin{Theorem}
		\label{true}
		Let $G$ be a graph with $\nu_0(G)\leq 2$. Then  $I(G)^{[k]}$ is linearly related for all $k\geq 2$.
	\end{Theorem}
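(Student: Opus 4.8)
The plan is to deduce the general statement from the case $k=2$. By Theorem~\ref{next} it suffices to prove that $I(G)^{[2]}$ is linearly related (if $\nu(G)\le 1$ then $I(G)^{[2]}=(0)$ and there is nothing to prove, and the zero ideal is vacuously linearly related). To establish this I would apply Theorem~\ref{connected}: one must show that for any two distinct generators $u=u_M$ and $v=u_N$ of $I(G)^{[2]}$, arising from $2$-matchings $M=\{e_1,e_2\}$ and $N=\{f_1,f_2\}$ of $G$, there is a path joining $u$ and $v$ in $G^{(u,v)}_{I(G)^{[2]}}$. Since $u_M$ is just the squarefree monomial supported on $e_1\cup e_2$, the hypothesis $u\neq v$ forces $\supp(u)\neq\supp(v)$, and both supports have exactly four vertices.

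The heart of the matter is the case in which $M$ and $N$ share an edge; write $M=\{e,g\}$, $N=\{e,g'\}$ with $g\neq g'$. If $g\cap g'\neq\emptyset$ then $\deg\lcm(u,v)=5$, so $u$ and $v$ are already adjacent. If $g\cap g'=\emptyset$, then $\{e,g,g'\}$ is a $3$-matching, hence (as $\nu_0(G)\le 2$) not a restricted matching; unwinding the definition — the special edge would have to be $g$ or $g'$ — shows that either (a) $\{g,g'\}$ is not a gap, or (b) $\{g,g'\}$ is a gap but then neither $\{e,g\}$ nor $\{e,g'\}$ is a gap. Put $e=(ab)$, $g=(xy)$, $g'=(zw)$ (six distinct vertices), so $\lcm(u,v)=x_ax_bx_xx_yx_zx_w$. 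In case (a) choose an edge meeting both $g$ and $g'$; after relabelling it is $(xz)$, and
$$(ab)(xy),\quad (ab)(xz),\quad (ab)(zw)$$
is a path, each consecutive least common multiple having degree $5$. In case (b) choose an edge $h$ meeting $e$ and $g$, and an edge $h'$ meeting $e$ and $g'$; after relabelling, $h=(ax)$ and $h'\in\{(az),(bz)\}$, and one of
$$(ab)(xy),\ (az)(xy),\ (ax)(zw),\ (ab)(zw)\qquad\text{or}\qquad (ab)(xy),\ (ax)(bz),\ (ab)(zw)$$
is a path, according to whether $h$ and $h'$ meet $e$ in the same vertex or not. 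In each situation the listed monomials are products of two disjoint edges of $G$ (hence generators of $I(G)^{[2]}$) and divide $\lcm(u,v)$, so these are genuine paths in $G^{(u,v)}_{I(G)^{[2]}}$.

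Now I would reduce the general situation to the previous one. We may assume $M$ and $N$ have no common edge, so $e_1,e_2,f_1,f_2$ are four distinct edges. Because $\supp(u)\neq\supp(v)$ and both have four vertices, some $f_j$ — say $f_1$ — has a vertex outside $e_1\cup e_2$, so $f_1$ meets at most one of $e_1,e_2$; swapping the labels $e_1,e_2$ if necessary, assume $f_1\cap e_2=\emptyset$. Then $M'=\{f_1,e_2\}$ is a $2$-matching with $u_{M'}\mid\lcm(u,v)$, and $M'$ shares the edge $e_2$ with $M$; by the common-edge case there is a path from $u$ to $u_{M'}$ inside $G^{(u,u_{M'})}_{I(G)^{[2]}}\subseteq G^{(u,v)}_{I(G)^{[2]}}$. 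Since $M'$ shares $f_1$ with $N$, either $M'=N$ (and we are done) or one further application of the common-edge case yields a path from $u_{M'}$ to $v$ inside $G^{(u,v)}_{I(G)^{[2]}}$; concatenating connects $u$ and $v$.

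I expect the common-edge case with $g\cap g'=\emptyset$ to be the main obstacle: there $\lcm(u,v)$ has degree $6$, the generators $u$ and $v$ share only three of the six vertices involved, and no connecting generator need exist in general — this is precisely where the hypothesis $\nu_0(G)\le 2$ enters, since forbidding restricted $3$-matchings forces enough bridging edges among $e,g,g'$ for the explicit paths above to be available. The only other delicate point is the bookkeeping of the various relabellings (swapping the two endpoints of $e$, $g$, and $g'$) needed to bring $h$, $h'$, and the bridging edge into the stated normal form.
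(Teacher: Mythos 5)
Your proposal is correct, and it reaches the conclusion by a route that overlaps with but does not coincide with the paper's. Both arguments share the same skeleton: Theorem~\ref{next} reduces everything to showing $I(G)^{[2]}$ is linearly related, and the hypothesis $\nu_0(G)\leq 2$ enters only through the fact that no $3$-matching $\{e,g,g'\}$ can be restricted, which forces the bridging edges you use. The divergence is in how the two proofs dispose of pairs of generators with small overlap. The paper does not verify the connectivity criterion of Theorem~\ref{connected} for all pairs: it first invokes Theorem~\ref{nurselsays} (a statement valid for arbitrary graphs, proved via the Taylor complex in Lemma~\ref{taylor}) to kill $b_{1,p}(I^{[2]})$ for $p=7,8$, and is then left only with multidegrees of degree $6$, where Corollary~\ref{connected-one direction} applies; these it settles by enumerating the four possible overlap patterns of two $2$-matchings sharing exactly two vertices ($v=(ab)(ef)$, $(ac)(ef)$, $(ae)(bf)$, $(ae)(cf)$ against $u=(ab)(cd)$). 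You instead prove the full path condition of Theorem~\ref{connected} for every pair, handling the low-overlap pairs (lcm of degree $7$ or $8$) by interpolating the intermediate matching $M'=\{f_1,e_2\}$ and chaining two applications of your shared-edge analysis; your shared-edge case with $g\cap g'=\emptyset$ is essentially the paper's Case~1, organized via the dichotomy on whether $\{g,g'\}$ is a gap. I checked your explicit paths: in each one the listed monomials are products of two disjoint edges dividing $\lcm(u,v)$ and consecutive lcms have degree $5$, and the containments $G^{(u,u_{M'})}_{I^{[2]}}\subseteq G^{(u,v)}_{I^{[2]}}$ you need do hold because $u_{M'}$ divides $\lcm(u,v)$. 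What your approach buys is self-containment — you never need Theorem~\ref{nurselsays} or the Taylor-complex lemma — at the cost of the extra chaining step; the paper's approach gets the high-degree strands for free from a general result and keeps the case analysis confined to a single multidegree.
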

	We postpone the proof of this theorem to the end of this section. First we would like to remark that in the statement of Theorem~\ref{true}, the property of being linearly related can not be replaced by having linear resolution. For example, let $G=C_7$ be the $7$-cycle. Then $\nu_0(C_7)=2$. Thus, by Theorem~\ref{true}, $I(C_7)^{[2]}$ is linearly related. However, $I(C_7)^{[2]}$ does not have linear resolution, as can be seen from its Betti diagram given by CoCoA. 

\medskip 	
	
	{
		\begin{verbatim}
		       0     1     2    
		---------------------
		4:    14    21     7    
		5:     -     -     1     
		---------------------
		Tot:  14    21     8   
		\end{verbatim}
	}

	\medskip 
	For the proof of Theorem~\ref{true}, we need the following general result.

	\begin{Theorem}
		\label{nurselsays}
		Let $G$ be a graph, and let $k\geq 2$. Then $b_{1,p}(I(G)^{[k]})=0$ for all $p\geq 3k+1$.
	\end{Theorem}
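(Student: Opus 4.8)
The plan is to reduce the whole statement to Lemma~\ref{taylor} applied to each squarefree monomial $m$ of degree $p$. Since $I(G)^{[k]}$ is a squarefree monomial ideal, $b_{1,\ab}(I(G)^{[k]})=0$ unless $\ab\in\{0,1\}^n$, so $b_{1,p}(I(G)^{[k]})=\sum_{\deg m=p}b_{1,m}(I(G)^{[k]})$ with the sum over squarefree $m$; hence it is enough to prove $b_{1,m}(I(G)^{[k]})=0$ whenever $m$ is squarefree with $\deg m=p\geq 3k+1$. Fix such an $m$. By Lemma~\ref{taylor} it suffices to produce, for every pair $u,v\in G(I(G)^{[k]})$ with $\lcm(u,v)=m$, a generator $w\in G(I(G)^{[k]})\setminus\{u,v\}$ with $\lcm(u,v,w)=m$ and $\lcm(u,w)\neq m\neq\lcm(v,w)$; if no such pair $u,v$ exists, the hypothesis of Lemma~\ref{taylor} is vacuous and there is nothing to show.

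So write $u=u_M$ and $v=u_N$ for $k$-matchings $M,N$ of $G$ with $\lcm(u,v)=m$. First I would record the elementary counts: $\supp(u)=V(M)$ and $\supp(v)=V(N)$ each have $2k$ elements and $|\supp(m)|=p$, so $|V(M)\cap V(N)|=4k-p\leq k-1$, and therefore, writing $A=V(M)\setminus V(N)$ and $B=V(N)\setminus V(M)$, we get $|A|=|B|=p-2k\geq k+1\geq 3$, the last inequality using $k\geq 2$. Since the $k$ edges of $M$ are pairwise disjoint and an edge of $M$ can meet $V(N)$ only by containing a vertex of $V(M)\cap V(N)$, at most $|V(M)\cap V(N)|\leq k-1$ of them meet $V(N)$; hence some edge $e=(a_1a_2)$ of $M$ lies entirely in $A$. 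Symmetrically some edge $f=(b_1b_2)$ of $N$ lies entirely in $B$, and $e\cap f=\emptyset$ because $A\cap B=\emptyset$. The candidate is $w=u_{M'}$, where $M'=(M\setminus\{e\})\cup\{f\}$: this is again a $k$-matching of $G$ since $f\subseteq B$ is disjoint from $V(M)\supseteq V(M\setminus\{e\})$, and $V(M')=(V(M)\setminus\{a_1,a_2\})\cup\{b_1,b_2\}\subseteq V(M)\cup V(N)=\supp(m)$, so $w\in G(I(G)^{[k]})$ and $w\mid m$.

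The remaining verifications are short. We have $w\neq u$ because $b_1\in V(M')\setminus V(M)$, and $w\neq v$ because $A\setminus\{a_1,a_2\}\neq\emptyset$ provides a vertex of $V(M')$ lying outside $V(N)=\supp(v)$; condition (i) of Lemma~\ref{taylor} holds since $w\mid m$; and condition (ii) holds since $\supp(u)\cup\supp(w)\subseteq V(M)\cup\{b_1,b_2\}$ misses the nonempty set $B\setminus\{b_1,b_2\}$, while $a_1\in\supp(m)$ lies outside $\supp(v)\cup\supp(w)$. Thus $b_{1,m}(I(G)^{[k]})=0$, and summing over all squarefree $m$ of degree $p\geq 3k+1$ gives the theorem. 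The only place where the hypothesis $k\geq 2$ is genuinely used — and the one point deserving care — is the slack estimate $|A|=|B|=p-2k\geq k+1\geq 3$: this surplus of at least three vertices on each side is exactly what forces $M'$ to differ from both $M$ and $N$ and forces $\lcm(u,w)$ and $\lcm(v,w)$ to be proper divisors of $m$. For $k=1$ the construction collapses ($M'=N$), consistent with the bound being false there (already $b_{1,4}(I(2K_2))\neq 0$). Beyond this the argument is routine matching bookkeeping, so I do not anticipate a real obstacle.
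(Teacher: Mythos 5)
Your proof is correct and follows essentially the same route as the paper: reduce to Lemma~\ref{taylor}, use the degree surplus $p-2k\geq k+1$ to find an edge of each matching lying entirely outside the support of the other, and take $w$ to be the matching obtained by swapping one such edge for the other. Your bookkeeping (the bound $|V(M)\cap V(N)|\leq k-1$, the verification that the swap yields a matching dividing $m$, and the checks of conditions (i) and (ii)) matches the paper's argument step for step, with only cosmetic differences in how condition (ii) is verified.
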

	
	\begin{proof}
		Let $I=I(G)$, and let $u_1,u_2\in G(I^{[k]})$  with $\lcm(u_1,u_2)=u$ and $\deg (u)\geq 3k+1$. By Lemma~\ref{taylor} it is enough to show that there exists $u_3\in G(I^{[k]})$ such that the conditions (i) and (ii) of Lemma~\ref{taylor} are satisfied.
		
		Let $u_1=(x_1y_1)(x_2y_2)\cdots (x_ky_k)$ where each $(x_iy_i)\in I$.  Since $\deg (u)\geq 3k+1$,  there are at least $k+1$ variables which divide $u_2$ but do not  divide $u_1$. Suppose that $u_2= e_1e_2\cdots e_k$ for some $e_i\in G(I)$.  Then there exists $i$ such that $e_i=x_{k+1}y_{k+1}$ and $\{x_{k+1}, y_{k+1}\}\sect \supp(u_1)=\emptyset$. For simplicity we may assume that $i=1$.
		
		We also conclude that $|\supp(u_1)\sect\supp(u_2)|\leq k-1$. Suppose that $(\supp(u_1)\sect\supp(u_2))\sect \{x_j,y_j\}\neq \emptyset$ for all $j=1,\ldots,k$. Then $|\supp(u_1)\sect\supp(u_2)|\geq k$, a contradiction. Therefore, there exists
		$x_jy_j$ for some $1\leq j\leq k$ such that $(\supp(u_1)\sect\supp(u_2))\sect \{x_j,y_j\}=\emptyset$. Hence,   $\supp(u_2)\sect \{x_{j}, y_{j}\}=\emptyset$, because $\{x_{j}, y_{j}\}\subset \supp(u_1)$.
		
		We may assume that $j=1$. We choose $u_3= (x_2y_2)(x_3y_3)\ldots (x_ky_k)(x_{k+1}y_{k+1})$ in $G(I^{[k]})$. Then for this $u_3$, (i) is obviously satisfied. Moreover, condition (ii) holds, because $x_1\nmid \lcm(u_2,u_3)$ and $\lcm(u_1,u_3)$ has degree $2k+2$.
	\end{proof}
	We are now ready to prove Theorem~\ref{true}.
	\begin{proof}
		[Proof of Theorem~\ref{true}]
		Because of Theorem~\ref{next}, it suffices to show that $I^{[2]}=I(G)^{[2]}$ is linearly related. By using the Taylor resolution \cite{T} associated to $I^{[2]}$, we see that $b_{1,p}(I^{[2]})=0$ for all $p>8$. By Theorem~\ref{nurselsays}, we also have $b_{1,p}(I^{[2]})=0$ for $p=7,8$. Hence it remains to show that $b_{1,m}(I^{[2]})=0$ for any squarefree monomial $m$ of degree $6$. By Corollary~\ref{connected-one direction}, it suffices to show that for any $u,v\in G(I^{[2]})$ with $\lcm (u,v)=m$, there is a path in $G_{I^{[2]}}^{(u,v)}$ connecting $u$ and $v$.
		
		\medskip
		Let $m=abcdef$ and $u=(ab)(cd)$. It is enough  to consider the following cases:
		
		\medskip
		Case~1.  $v=(ab)(ef)$.  Since $\nu_0(G)\leq 2$, the edge $(ab)$ does not form a gap with $(cd)$ and $(ef)$. Therefore, by symmetry, we may assume that $(ac)$ is an edge of $G$. Similarly the edge $(ef)$ cannot form a gap with $(ab)$ and $(cd)$. By symmetry, we may assume that $G$  must have one of the following edges: $(ea)$, $(eb)$,$(ec)$, $(ed)$. According to these four  different cases we provide a path  in $G_{I^{[2]}}^{(u,v)}$ connecting $u$ and $v$:
		
		\begin{enumerate}
			\item[(i)] $u, (ea)(cd), (ac)(ef), v$,  if $(ea)\in E(G)$;

			\item[(ii)] $u, (eb)(ac), v$,  if  $(eb)\in E(G)$;

			\item[(iii)] $u, (ec)(ab), v$,  if  $(ec)\in E(G)$;

			\item[(iv)]  $u, (ed)(ab), v$,  if  $(ed)\in E(G)$.
		\end{enumerate}
		
		\medskip
		Case~2. $v=(ac)(ef)$.  Since $\nu_0(G)\leq 2$, by symmetry we may assume $(ec)\in E(G)$ or $(ed)\in E(G)$. In the first case we have the path $u,(ec)(ab), v$, and in the second case we have the path $u,(ed)(ac), v$ in $G_{I^{[2]}}^{(u,v)}$.

		\medskip
		Case~3.  $v=(ae)(bf)$. Since $\nu_0(G)\leq 2$, by symmetry we may assume $(ec)\in E(G)$ or $(ac)\in E(G)$. In the first case we have the path $u,(ec)(ab), v$, and in the second case we have the path $u,(bf)(ac), v$ in $G_{I^{[2]}}^{(u,v)}$.
		
		\medskip
		Case~4. $v=(ae)(cf)$.  We have the path $u,(ab)(cf), v$ in $G_{I^{[2]}}^{(u,v)}$.
	\end{proof}

	\section{Squarefree powers with linear resolutions}
	
In this section we study squarefree powers of edge ideals with linear resolution. As a main tool we use the following

\begin{Lemma}
	\label{general}
	Let $G$ be a graph and let $e=(ab)$ be an edge of $G$ satisfying the following conditions:
	\begin{enumerate}
		\item $I(G-e)^{[k]}$ has linear resolution;
		\item $I(G-\{a,b\})^{[k-1]}$ has linear resolution;
		\item $L(G,e,k):=I(G-e)^{[k]}\cap (ab)I(G-\{a,b\})^{[k-1]}$ has  $(2k+1)$-linear resolution.
	\end{enumerate}
	Then $I(G)^{[k]}$ has linear resolution.
\end{Lemma}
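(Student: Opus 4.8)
The plan is to split the generators of $I(G)^{[k]}$ according to whether a $k$-matching of $G$ uses the edge $e=(ab)$ or not, and then to run the standard short exact sequence argument that controls the regularity of a sum of two ideals.

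First I would establish the decomposition
\[
I(G)^{[k]} = A + B, \qquad A:=I(G-e)^{[k]},\quad B:=(ab)\,I(G-\{a,b\})^{[k-1]}.
\]
For ``$\subseteq$'', take a $k$-matching $M$ of $G$: if $e\notin M$ then $M$ is a $k$-matching of $G-e$ and $u_M\in A$; if $e\in M$ then the edges of $M\setminus\{e\}$ avoid the vertices $a$ and $b$, so $M\setminus\{e\}$ is a $(k-1)$-matching of $G-\{a,b\}$ and $u_M=(ab)\,u_{M\setminus\{e\}}\in B$. The reverse inclusion is equally immediate: a $k$-matching of $G-e$ is a $k$-matching of $G$, and if $M'$ is a $(k-1)$-matching of $G-\{a,b\}$ then $\{e\}\cup M'$ is a $k$-matching of $G$ with $u_{\{e\}\cup M'}=(ab)\,u_{M'}$. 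Note both $A$ and $B$ are generated in degree $2k$, hence so is $A+B=I(G)^{[k]}$.

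Next I would record the regularities of the three building blocks. By hypothesis (1), $A$ has $2k$-linear resolution, so $\reg A=2k$. The ideal $I(G-\{a,b\})^{[k-1]}$ lives in the polynomial ring on the vertex set of $G-\{a,b\}$, which does not involve the variables $a,b$; by hypothesis (2) it has $(2k-2)$-linear resolution, and $B$ is obtained from it by a harmless extension of scalars to $S$ followed by multiplication by the monomial $ab$, i.e. a shift by $2$ in internal degree that leaves the shape of the resolution unchanged. Hence $B$ also has $2k$-linear resolution, so $\reg(A\oplus B)=\max\{\reg A,\reg B\}=2k$. Finally, hypothesis (3) says $A\cap B=L(G,e,k)$ has $(2k+1)$-linear resolution, so $\reg(A\cap B)=2k+1$.

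The argument then concludes by feeding these numbers into the short exact sequence
\[
0\longrightarrow A\cap B\longrightarrow A\oplus B\longrightarrow A+B\longrightarrow 0
\]
together with the standard inequality $\reg(A+B)\le\max\{\reg(A\oplus B),\,\reg(A\cap B)-1\}=\max\{2k,2k\}=2k$. Since $A+B=I(G)^{[k]}$ is generated in degree $2k$ we also have $\reg(I(G)^{[k]})\ge 2k$, so $\reg(I(G)^{[k]})=2k$ and $I(G)^{[k]}$ has linear resolution. There is no real obstacle internal to the lemma: the three hypotheses are tailored precisely so the numerology in the short exact sequence closes up. The only points needing a little care are the exact matching decomposition $I(G)^{[k]}=A+B$ (in particular that $B$ is exactly $(ab)\,I(G-\{a,b\})^{[k-1]}$, not something larger) and the observation that passing from $I(G-\{a,b\})^{[k-1]}$ to $B$ is just a base change plus a degree shift because $a$ and $b$ are not vertices of $G-\{a,b\}$; the genuine work is deferred to verifying hypothesis (3) in concrete applications of the lemma.
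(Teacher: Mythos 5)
Your proposal is correct and follows essentially the same route as the paper: the decomposition $I(G)^{[k]}=I(G-e)^{[k]}+(ab)I(G-\{a,b\})^{[k-1]}$, the short exact sequence $0\to L(G,e,k)\to I(G-e)^{[k]}\oplus (ab)I(G-\{a,b\})^{[k-1]}\to I(G)^{[k]}\to 0$, and the resulting regularity bound (the paper phrases this via the graded long exact sequence of $\Tor$, which is exactly what your inequality $\reg(A+B)\le\max\{\reg(A\oplus B),\reg(A\cap B)-1\}$ encodes). Your explicit verification of the matching decomposition and of the degree shift for $B$ fills in details the paper leaves implicit.
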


\begin{proof}
	Note that $I(G)^{[k]}=I(G-e)^{[k]}+ (ab)I(G-\{a,b\})^{[k-1]}$. So, we have the following short exact sequence
	\[
	0\to L(G,e,k)\to I(G-e)^{[k]}\dirsum (ab)I(G-\{a,b\})^{[k-1]}\to I(G)^{[k]}\to 0.
	\]
	Thus, we obtain the long exact sequence
	\begin{eqnarray}
	\nonumber
	\cdots &\to& \Tor_i(L(G,e,k),K)\to \Tor_i(I(G-e)^{[k]},K)\dirsum \Tor_i((ab)I(G-\{a,b\})^{[k-1]},K)\\
	\nonumber
	&\to& \Tor_i(I(G)^{[k]},K)\to \cdots
	\end{eqnarray}
	from which the desired conclusion follows by considering its graded components.
\end{proof}

In view of Lemma~\ref{general}, it is important to have a description of the generators of the ideal $L(G,e,k)$.

\begin{Lemma}\label{generators degree}
	Let $e=(ab)$ be an edge of $G$. Suppose that for any $e_1\cdots e_k\in I(G-e)^{[k]}$ with $e_i\in E(G-e)$ for $i=1,\dots ,k$, there exists a vertex $c$ with
	$c\neq a,b$ such that $(ca)$ or $(cb)\in E(G)$, and there exists some $j$ with $c|e_j$ and $e_1\cdots \hat{e_j}\cdots e_k\in I(G-\{a,b\})^{[k-1]}$. Then $L(G,e,k)$ is generated in degree $2k+1$.
\end{Lemma}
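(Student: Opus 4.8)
The statement claims that under the stated hypothesis, $L(G,e,k)=I(G-e)^{[k]}\cap (ab)I(G-\{a,b\})^{[k-1]}$ is generated in degree $2k+1$. First I observe that every generator of $(ab)I(G-\{a,b\})^{[k-1]}$ has degree $2k$, and these are squarefree monomials whose support contains $\{a,b\}$ but whose remaining $2k-2$ variables avoid $a,b$. A generator of $I(G-e)^{[k]}$ has degree $2k$ and its support avoids neither nothing in particular, but it is a product $e_1\cdots e_k$ of $k$ pairwise disjoint edges of $G-e$. The key point is: the intersection $L(G,e,k)$ is a monomial ideal whose generators are the least common multiples $\operatorname{lcm}(w_1,w_2)$ where $w_1\in G(I(G-e)^{[k]})$ and $w_2\in G((ab)I(G-\{a,b\})^{[k-1]})$, taken minimally. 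I want to show every such $\operatorname{lcm}$ that is a minimal generator has degree exactly $2k+1$; since $a,b\notin\operatorname{supp}(w_1)$ would force degree jumps, the only way to get degree $2k$ is if $w_1$ itself were divisible by $ab$, which is impossible as $(ab)\notin E(G-e)$ — so degree $\geq 2k+1$ always. The real content is the upper bound: $L(G,e,k)$ is generated in degree $\leq 2k+1$, i.e. every element of $L(G,e,k)$ is divisible by some degree-$(2k+1)$ element of $L(G,e,k)$.

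Here is where the hypothesis enters. Take any monomial $m\in L(G,e,k)$; it is divisible by some generator $w_1=e_1\cdots e_k\in G(I(G-e)^{[k]})$ with each $e_i\in E(G-e)$, and also $m\in (ab)I(G-\{a,b\})^{[k-1]}$, so in particular $ab\mid m$. Apply the hypothesis to this $w_1$: there is a vertex $c\neq a,b$ with $(ca)$ or $(cb)$ in $E(G)$, and an index $j$ with $c\mid e_j$ and $e_1\cdots\widehat{e_j}\cdots e_k\in I(G-\{a,b\})^{[k-1]}$. Then I claim the monomial $N:=(ab)\cdot e_1\cdots\widehat{e_j}\cdots e_k$, or rather a slight variant using the edge $(ca)$ (resp. $(cb)$), lies in $L(G,e,k)$ and has degree $2k+1$, and divides $m$. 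Indeed: $N':=(ca)\cdot b\cdot e_1\cdots\widehat{e_j}\cdots e_k$ — wait, more carefully — since $c\mid e_j$, the monomial $(ab)\cdot(e_1\cdots\widehat{e_j}\cdots e_k)\cdot c$ has degree $2+(2k-2)+1=2k+1$, is divisible by $ab$ times an element of $I(G-\{a,b\})^{[k-1]}$ (namely $ab\cdot(e_1\cdots\widehat{e_j}\cdots e_k)$ times the extra variable $c$, which still lies in $(ab)I(G-\{a,b\})^{[k-1]}$ since that ideal absorbs multiplication by variables), and is also divisible by an element of $I(G-e)^{[k]}$: use the $k$-matching $\{(ca)\text{ or }(cb)\}\cup\{e_1,\dots,e_k\}\setminus\{e_j\}$, which is a genuine matching of $G-e$ because $(ca),(cb)\neq(ab)$ and because $e_1\cdots\widehat{e_j}\cdots e_k$ being in $I(G-\{a,b\})^{[k-1]}$ means those $k-1$ edges avoid both $a$ and $b$, hence are disjoint from $(ca)$ resp. $(cb)$ except for $c$ — but $c\mid e_j$ so $c$ may lie in some $e_i$; this needs care. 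The clean choice: the matching is $\{(ca)\}\cup\{e_i: i\neq j\}$; disjointness of the $e_i$ ($i\neq j$) from each other is given, disjointness of $(ca)=\{c,a\}$ from each $e_i$ ($i\neq j$) holds because $a\notin\operatorname{supp}(e_1\cdots\widehat{e_j}\cdots e_k)$ (it lies in $I(G-\{a,b\})^{[k-1]}$) and $c\notin\operatorname{supp}(e_1\cdots\widehat{e_j}\cdots e_k)$ (the $e_i$ are disjoint from $e_j\ni c$). So this is a valid $k$-matching of $G-e$, its product is a degree-$2k$ generator of $I(G-e)^{[k]}$, and $ab\cdot\prod_{i\neq j}e_i\cdot c \;=\; (ca)\cdot\prod_{i\neq j}e_i \cdot b$ — the $\operatorname{lcm}$ of $(ca)\prod_{i\neq j}e_i$ and $ab\prod_{i\neq j}e_i$ is exactly $abc\prod_{i\neq j}e_i$, of degree $2k+1$, and it divides $m$ since $ab\mid m$, $\prod_{i\neq j}e_i\mid w_1\mid m$, and $c\mid e_j\mid w_1\mid m$.

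Thus every $m\in L(G,e,k)$ is divisible by a degree-$(2k+1)$ element of $L(G,e,k)$, which combined with the lower bound $\deg\geq 2k+1$ shows $L(G,e,k)$ is generated in degree $2k+1$. The main obstacle — and the only place subtlety is genuinely needed — is verifying that the candidate $k$-matching $\{(ca)\text{ or }(cb)\}\cup\{e_i:i\neq j\}$ really is a matching in $G-e$; this is exactly why the hypothesis is phrased with the condition $e_1\cdots\widehat{e_j}\cdots e_k\in I(G-\{a,b\})^{[k-1]}$ rather than merely $\in I(G-e)^{[k-1]}$: we need those $k-1$ edges to avoid both endpoints of $e$, so that adjoining an edge through $a$ (or $b$) keeps the collection pairwise disjoint. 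I would also remark that the lower bound $\deg\geq 2k+1$ uses that $(ab)\notin E(G-e)$, so no generator of $I(G-e)^{[k]}$ is divisible by $ab$, forcing any common multiple with a generator of $(ab)I(G-\{a,b\})^{[k-1]}$ to pick up at least one variable beyond the $2k$ variables of that generator.
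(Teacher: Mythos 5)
Your proof is correct and follows essentially the same route as the paper: apply the hypothesis to a $k$-matching $e_1\cdots e_k$ dividing the given element, and check that $v=abc\,\prod_{i\neq j}e_i$ has degree $2k+1$, lies in both $I(G-e)^{[k]}$ (via the matching $\{(ca)\}\cup\{e_i:i\neq j\}$, which is pairwise disjoint precisely because $e_1\cdots\hat{e_j}\cdots e_k\in I(G-\{a,b\})^{[k-1]}$) and in $(ab)I(G-\{a,b\})^{[k-1]}$, and divides the given element. One small caveat: your separate lower-bound remark is misjustified --- a product of $k$ pairwise disjoint edges of $G-e$ can perfectly well be divisible by $ab$ when $a$ and $b$ lie in \emph{different} edges of the matching --- but this does not affect the argument, since the divisibility statement you establish already forces every minimal generator of $L(G,e,k)$ to coincide with a degree-$(2k+1)$ element of $L(G,e,k)$.
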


\begin{proof}
	Let $u\in G(L(G,e,k))$. Then
	\[
	u=\lcm (e_1\cdots e_k, (ab)e'_1\cdots e'_{k-1})
	\]	
	where $e_1\cdots e_k\in I(G-e)^{[k]}$ with $e_i\in E(G-e)$ for all $i\in[k]$ and $e'_1\cdots e'_{k-1}\in I(G-{\{a,b\}})^{[k-1]}$ with $e'_{\ell}\in E(G-\{a,b\})$ for all $\ell\in[k-1]$. For $e_1\cdots e_k$, there exist some $c$ and $j$ which satisfy the conditions of the lemma, and we may assume that $(cb)\in E(G)$. Note that the squarefree monomial  $v=c(ab)e_1\cdots \hat{e_j}\cdots e_k$ divides $u$.  Moreover, observe that $v\in L(G,e,k)$. Indeed, since
	$e_1\cdots \hat{e_j}\cdots e_k\in I(G-\{a,b\})^{[k-1]}$, it follows that $v\in (ab)I(G-\{a,b\})^{[k-1]}$. On the other hand, $v\in I(G-e)^{[k]}$, because $(cb)e_1\cdots \hat{e_j}\cdots e_k\in I(G-e)^{[k]}$. Since $v$ is of degree $2k+1$, the desired conclusion follows.
\end{proof}

It follows from the proof of Lemma~\ref{generators degree} that under the assumptions of the lemma we have the following description of the generators of $L(G,e,k)$.

\begin{Corollary}\label{generators}
	Let $e=(ab)$ be an edge of $G$. Suppose the conditions of Lemma~\ref{generators degree} are satisfied. Then $L(G,e,k)$ is generated by the monomials of the form
	\[
	c(ab)e_1\cdots e_{k-1}
	\]
	where {\em(i)} $c\neq a,b$,
	{\em(ii)} $(ca)$ or $(cb)\in E(G)$, and
	{\em(iii)}~$e_1\cdots e_{k-1}\in I(G-\{a,b,c\})^{[k-1]}$.
\end{Corollary}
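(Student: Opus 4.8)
The plan is to extract the description straight from the proof of Lemma~\ref{generators degree}, adding only one small observation about squarefreeness. Since that lemma already establishes that $L(G,e,k)$ is generated in degree $2k+1$, and every monomial of the asserted shape $c(ab)e_1\cdots e_{k-1}$ also has degree $2k+1$, it suffices to prove the two inclusions: (a) every such monomial lies in $L(G,e,k)$, and (b) every minimal monomial generator of $L(G,e,k)$ has that shape.

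For (a) I would take $w=c(ab)e_1\cdots e_{k-1}$ with $c\ne a,b$, with $(ca)$ or $(cb)$ an edge of $G$, and with $\{e_1,\dots,e_{k-1}\}$ a $(k-1)$-matching of $G-\{a,b,c\}$; say $(cb)\in E(G)$ (the other case is symmetric). Since the $e_i$ avoid $a,b,c$, the set $\{(cb),e_1,\dots,e_{k-1}\}$ is a $k$-matching of $G-e$ --- here one uses $c\ne a$ to ensure $(cb)\ne e$ --- so $(cb)e_1\cdots e_{k-1}\in I(G-e)^{[k]}$ and hence $w\in I(G-e)^{[k]}$. Also $\{e_1,\dots,e_{k-1}\}$ is a $(k-1)$-matching of $G-\{a,b\}$, so $e_1\cdots e_{k-1}\in I(G-\{a,b\})^{[k-1]}$ and $w=(ab)\cdot c\cdot e_1\cdots e_{k-1}\in (ab)I(G-\{a,b\})^{[k-1]}$. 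Intersecting the two memberships gives $w\in L(G,e,k)$.

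For (b) I would take a minimal generator $u$ of $L(G,e,k)$, necessarily of degree $2k+1$, and rerun the construction in the proof of Lemma~\ref{generators degree}: writing $u=\lcm(e_1\cdots e_k,(ab)e'_1\cdots e'_{k-1})$ with $e_i\in E(G-e)$ and with $e'_1\cdots e'_{k-1}$ coming from $I(G-\{a,b\})^{[k-1]}$, the hypothesis of that lemma produces a vertex $c\ne a,b$ with $(ca)$ or $(cb)$ an edge of $G$ and an index $j$ with $c\mid e_j$ so that the squarefree monomial $v=c(ab)e_1\cdots\hat{e_j}\cdots e_k$ divides $u$ and lies in $L(G,e,k)$; minimality of $u$ together with $\deg v=2k+1=\deg u$ forces $u=v$. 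It remains to see that $e_1\cdots\hat{e_j}\cdots e_k\in I(G-\{a,b,c\})^{[k-1]}$ rather than merely in $I(G-\{a,b\})^{[k-1]}$, and this is forced by squarefreeness of $v$: since $a,b,c$ already occur in the factor $c(ab)$, none of the edges $e_i$ with $i\ne j$ can be incident to $a$, $b$ or $c$, so $\{e_i:i\ne j\}$ is a $(k-1)$-matching of $G-\{a,b,c\}$. Hence $u=v$ has exactly the asserted form.

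I do not expect a real obstacle, as essentially all of the work is already contained in Lemma~\ref{generators degree} and its proof; the points that need a little care are the bookkeeping ones flagged above --- namely that $c\ne a$ keeps the auxiliary edge $(cb)$ inside $G-e$ in part (a), and that squarefreeness is precisely what upgrades the residual matching from $G-\{a,b\}$ to $G-\{a,b,c\}$, yielding condition (iii) in part (b).
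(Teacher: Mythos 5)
Your proof is correct and takes essentially the same approach as the paper: the paper offers no separate argument for Corollary~\ref{generators}, stating only that it follows from the proof of Lemma~\ref{generators degree}, and your two inclusions are exactly the details that remark leaves implicit. In particular, your observation that squarefreeness of $v=c(ab)e_1\cdots\hat{e_j}\cdots e_k$ forces the residual edges to avoid $c$ as well (upgrading membership from $I(G-\{a,b\})^{[k-1]}$ to $I(G-\{a,b,c\})^{[k-1]}$) is the right justification for condition (iii).
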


\begin{Remark}\label{rk:perfect matching}
	Let $G$ be a tree on $n$ vertices. It was shown in \cite[page 1089]{BHZ} that $\nu_0(G)\geq \nu(G)-1$. In particular, this implies that $\nu_0(G)= \nu(G)-1$ when $n>2$ and $G$ admits a perfect matching.
\end{Remark}
As an application of Lemma~\ref{general} and Corollary~\ref{generators}, we obtain  

\begin{Theorem}\label{sara}
	Let $G$ be a tree which admits a perfect matching. Then $I(G)^{[\nu_0(G)]}$ has linear resolution. 	
\end{Theorem}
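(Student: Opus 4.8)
The plan is to induct on the number of vertices of $G$, using Lemma~\ref{general} together with Corollary~\ref{generators}. Write $n$ for the number of vertices of the tree $G$. If $n=2$ then $G$ is a single edge, $\nu_0(G)=1$, and $I(G)^{[1]}=I(G)$ is principal, hence has linear resolution. So assume $n>2$. Since a tree with a perfect matching has a leaf $a$ whose unique neighbour $b$ is matched to it, set $e=(ab)$; this edge will be the one to which we apply Lemma~\ref{general} with $k=\nu_0(G)$. By Remark~\ref{rk:perfect matching} we have $\nu_0(G)=\nu(G)-1$, and because $a$ is a leaf, the matching $M$ restricted to $G-\{a,b\}$ is again perfect on that (forest) graph, so $\nu(G-\{a,b\})=\nu(G)-1=\nu_0(G)$ and hence $\nu_0(G-\{a,b\})=\nu(G-\{a,b\})-1 = \nu_0(G)-1 = k-1$ (applying Remark~\ref{rk:perfect matching} componentwise — every component of $G-\{a,b\}$ still has a perfect matching, or is an isolated vertex, which contributes nothing).

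The three hypotheses of Lemma~\ref{general} then have to be checked. For condition~(2): $I(G-\{a,b\})^{[k-1]}$ is the $\nu_0$-th squarefree power of a forest each of whose components is a tree with a perfect matching (isolated vertices being irrelevant), so linear resolution follows from the inductive hypothesis applied to each component together with Lemma~\ref{disjoint} (the edge ideals of the components live in disjoint variable sets, and a disjoint-sum ideal is linearly related / has linear resolution iff each summand does). For condition~(1): since $a$ is a leaf with only neighbour $b$, the graph $G-e$ has $a$ as an isolated vertex, so $I(G-e)=I(G-a)$, and $G-a$ is a tree; it still has a perfect matching? No — removing a leaf destroys the perfect matching. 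Here I would instead note $\nu(G-e)=\nu(G-a)$; if this equals $\nu(G)$ then $k=\nu_0(G)=\nu(G)-1<\nu(G-e)$ is not the top power, so one needs a separate argument, whereas if $\nu(G-a)=\nu(G)-1=k$ then $I(G-e)^{[k]}$ is the top squarefree power of the tree $G-a$ and has linear quotients by Theorem~\ref{highest power}, hence linear resolution. The key observation making this dichotomy favourable is that $b$ has degree $\geq 1$ in $G-a$ only if it had other neighbours; in fact since $a$ is matched to $b$, deleting $a$ from the perfect matching leaves $b$ unmatched, so $\nu(G-a)=\nu(G)-1=k$, and condition~(1) holds via Theorem~\ref{highest power}.

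It remains to verify condition~(3), that $L(G,e,k)$ has $(2k+1)$-linear resolution. First I would check the combinatorial hypothesis of Lemma~\ref{generators degree}: given a $k$-matching $e_1\cdots e_k$ of $G-e=G-a$, since it uses $2k = 2\nu(G)-2$ vertices of $G$ and $G$ has $2\nu(G)$ vertices with a perfect matching, one checks that some neighbour $c$ of $b$ (other than $a$) is covered — this uses that the vertices missed by $e_1\cdots e_k$, together with the matching structure of the tree, force such a $c$ to exist; if not, $e_1\cdots e_k$ plus $(ab)$ would be a $(k+1)$-matching, contradicting $k+1=\nu(G)$ only if... — here the argument needs care, which is the main obstacle, see below. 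Granting the hypothesis, Corollary~\ref{generators} describes $L(G,e,k)$ as generated by monomials $c(ab)e_1\cdots e_{k-1}$ with $c$ a neighbour of $b$ (the leaf-ness of $a$ rules out $c\in N_G(a)$) and $e_1\cdots e_{k-1}$ a $(k-1)$-matching of $G-\{a,b,c\}$. One recognizes this, after dividing by the common factor $(ab)$, as a sum over the neighbours $c$ of $b$ of monomials $c\cdot I(G-\{a,b,c\})^{[k-1]}$; I would show this equals $(ab)\cdot I(G-a)^{[k]}\cap\big(\sum_{c\in N_G(b)\setminus a} (cb)\big)\cdots$, or more directly identify $L(G,e,k)/(ab)$ with the $(k)$-th squarefree power of the "star-contracted" forest and apply the inductive hypothesis once more; alternatively, since each $G-\{a,b,c\}$ for $c\in N_G(b)$ is again a forest whose nontrivial components are trees with perfect matchings and with $\nu_0 = k-1$, each $I(G-\{a,b,c\})^{[k-1]}$ has linear resolution by induction, and a Mayer–Vietoris / mapping-cone argument over the (few, because $b$ has bounded degree... no, unbounded) neighbours $c$ gives the $(2k+1)$-linearity.

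The hard part will be precisely condition~(3): verifying both the hypothesis of Lemma~\ref{generators degree} and then that the explicitly described ideal $L(G,e,k)$ has linear resolution, since the latter is not literally a squarefree power of an edge ideal but a sum $\sum_{c\in N_G(b)} c\,(ab)\, I(G-\{a,b,c\})^{[k-1]}$, and making this into a clean inductive statement — perhaps by choosing the leaf $a$ more cleverly, e.g. so that $b$ itself is "almost a leaf" in a suitable sense, or by a secondary induction on $\deg_G(b)$ peeling off one neighbour $c$ of $b$ at a time via another short exact sequence — is where the real work lies. I expect the proof to choose $a,b$ so that $b$ has a neighbour $c\neq a$ that is itself a leaf's partner, exploiting that trees have many leaves, which keeps the recursion inside the class "forests with perfect matchings."
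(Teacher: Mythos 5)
Your skeleton is exactly the paper's: induct on the size of the tree, split off a leaf edge $e=(ab)$, and feed the three conditions of Lemma~\ref{general} via Corollary~\ref{generators}; your verification of condition (1) ($\nu(G-e)=\nu(G)-1=k$, so Theorem~\ref{highest power} applies) is also the paper's. But there is a genuine gap at condition (3), and you say so yourself: with an arbitrary leaf $a$, the vertex $b$ may have many neighbours, $L(G,e,k)$ becomes the sum $\sum_{c\in N_G(b)\setminus\{a\}} abc\, I(G-\{a,b,c\})^{[k-1]}$, and you have no argument that this sum has a $(2k+1)$-linear resolution. The missing idea is the \emph{choice} of the leaf: by the cited result \cite[Lemma~2.4]{HL}, every tree with at least $6$ vertices has a leaf $a$ whose neighbour $b$ has degree exactly $2$. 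With that choice the vertex $c$ in Corollary~\ref{generators} is forced to be the unique second neighbour of $b$, so $L(G,e,k)=abc\,I(G-\{a,b,c\})^{[k-1]}$ is a single shifted copy of a squarefree power; moreover $\nu(G-\{a,b,c\})=k-1$ (a $k$-matching there would extend by $(bc)$ to a $(k+1)$-matching of $G-e$), so this is again a \emph{top} squarefree power and has linear quotients by Theorem~\ref{highest power}. No Mayer--Vietoris over the neighbours of $b$ and no secondary induction is needed. Your closing guess ("choose $a,b$ so that $b$ has a neighbour that is itself a leaf's partner") points in the right direction but is not the condition that makes the argument close.

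A secondary problem is your treatment of condition (2). If $b$ has degree $\ge 3$, then $G-\{a,b\}$ is a disconnected forest, and your appeal to Lemma~\ref{disjoint} "componentwise" does not work as stated: for a disjoint union $F_1\sqcup F_2$ one has $I(F_1\sqcup F_2)^{[m]}=\sum_{i+j=m}I(F_1)^{[i]}I(F_2)^{[j]}$, which is not an ideal of the form $I+J$ in disjoint variables, so neither Lemma~\ref{disjoint} nor a one-line tensor argument gives linear resolution from the components (the paper's own Figure~\ref{two squares} shows that disjoint unions can behave badly for squarefree powers). The degree-$2$ choice of $b$ dissolves this issue too: $G-\{a,b\}$ is then a connected tree with a perfect matching and $\nu_0(G-\{a,b\})=k-1$, so the inductive hypothesis applies verbatim. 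In short, the proposal is the right framework with the decisive combinatorial lemma missing, and the two places where you flag or hand-wave difficulty are precisely the places where that lemma is doing the work.
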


\begin{proof}
	Suppose that $G$ has $2t$ vertices. If $G$ consists only of one edge, then $\nu_0(G)=\nu(G)$ and the result follows from Theorem~\ref{highest power}. Thus assume that $G$ has more than one edge. Then we have $\nu_0(G)=\nu(G)-1=t-1$ by Remark~\ref{rk:perfect matching}. Now we use induction on $t\geq 2$. If $t=2$, then $G$ is just a path on 4 vertices. Then $\nu_0(G)=1$, and hence the result follows, since  $I(G)$ has linear resolution. We assume that $t\geq 3$. By
	\cite[Lemma~2.4]{HL}, there exists a leaf $a$, i.e. a vertex of degree~1, which has a neighbor $b$ of degree 2. We denote the edge $(ab)$ by $e$. Then we have
	\[
	I(G)^{[t-1]}=I(G-e)^{[t-1]}+(ab)I(G-\{a,b\})^{[t-2]}.
	\]
	
	Note that $I(G-e)^{[t-1]}$ has linear resolution by Theorem~\ref{highest power}, since $\nu(G-e)=t-1$. Indeed, to see that $\nu(G-e)=t-1$, let $M$ be a perfect matching for $G$. Then $e\in M$, since $a$ is a leaf. Therefore, $M\setminus \{e\}$ is a matching for $G-e$, and hence $\nu(G-e)\geq t-1$. If $\nu(G-e)=t$, then we get a contradiction, since $|V(G-e)|=2t$ and the vertex $a$ is an isolated vertex of $G-e$. 
	
	Next, observe that $G-\{a,b\}$ is a tree which admits a perfect matching with $\nu(G-\{a,b\})=t-1$. Therefore, $\nu_0(G-\{a,b\})=t-2$ by Remark~\ref{rk:perfect matching}. Then by our induction hypothesis $I(G-\{a,b\})^{[t-2]}$ has linear resolution.
	
	Now we show that $L(G,e,t-1)$ is generated in degree $2t-1$. For this purpose, we observe that the conditions of Lemma~\ref{generators degree} are satisfied for $G$ and $e=(ab)$. Given any $e_1\cdots e_{t-1}\in I(G-e)^{[t-1]}$, with $e_i\in E(G-e)$, we choose $c$ to be the neighbor of $b$ different from $a$. If none of the $e_j$'s contains $c$, then none of them contains $b$ as well. Therefore, $e_1\cdots e_{t-1}$ is of degree at most $2t-3$, a contradiction. Thus, there exists an $e_j$ which contains $c$. Moreover, $e_1\cdots \hat{e_j} \cdots e_{t-1}\in I(G-\{a,b\})^{[t-2]}$, since $e_j$ is the only edge among $e_1,\ldots, e_{t-1}$ which can contain $b$.
	
	It follows from Corollary~\ref{generators} that $L(G,e,t-1)=abc I(G-\{a,b,c\})^{[t-2]}$. Then by using Lemma~\ref{general}, the desired result of the theorem   follows, once we have shown that $I(G-\{a,b,c\})^{[t-2]}$ has linear resolution. This is indeed the case by Theorem~\ref{highest power}, because $\nu(G-\{a,b,c\})=t-2$. In fact, if there is a $(t-1)$-matching in $G-\{a,b,c\}$, then it can be extended by the edge $(bc)$ to a $t$-matching for $G-e$, a contradiction to the fact that $\nu(G-e)=t-1$.
\end{proof}

In view of Theorem~\ref{sara}, the reader may be interested in a criterion for a tree to have a perfect matching. Let $G$ be a tree on $[n]$.  
Even though the following criterion for a tree to admit a perfect matching is a moderate exercise in the graph theory, for the sake of convenience of the reader, its proof is supplied.

\begin{Proposition}
	\label{perfect}
	A tree $G$ on $[n]$ admits a perfect matching if and only if for each $i \in [n]$, exactly one connected component of $G-\{i\}$ consists of an odd number of vertices.
\end{Proposition}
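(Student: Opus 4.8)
The plan is to prove the two implications directly, exploiting the fact that $G$, being a tree, becomes a disjoint union of subtrees when a vertex is removed, one subtree hanging off each neighbor of the deleted vertex. For the forward direction, assume $G$ has a perfect matching $M$, so $n$ is even. Fix $i\in[n]$ and let $\{i,j\}$ be the edge of $M$ containing $i$. Every other edge of $M$ avoids $i$ and hence lies inside a single connected component of $G-\{i\}$, while $M\setminus\{\{i,j\}\}$ is a perfect matching of $V(G)\setminus\{i,j\}$. Restricting to a component $C$ of $G-\{i\}$, the edges of $M$ lying in $C$ cover $V(C)$ if $j\notin C$ and cover $V(C)\setminus\{j\}$ if $j\in C$; thus $|V(C)|$ is even for every component except the one containing $j$, whose order is odd. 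So exactly one component of $G-\{i\}$ has odd order.

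For the converse I would induct on $n$. The hypothesis forces $n\geq 2$, since for $n=1$ the graph $G-\{i\}$ has no components and the ``exactly one'' condition fails; the base case $n=2$ is the single edge. For $n\geq 3$ pick a leaf $a$ with neighbor $b$. Then $\{a\}$ is a component of $G-\{b\}$ of odd order $1$, so it is the unique odd component, and every other component of $G-\{b\}$ has even order. These other components are precisely the connected components $T_1,\dots,T_r$ of $G-\{a,b\}$ (and $r\geq 1$, else $G$ would be the single edge $ab$), and from $1+\sum_{s}|V(T_s)|=n-1$ one reads off that $n$ is even and that $n-|V(T_s)|$ is even for every $s$.

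The key step is then to verify that each $T_s$ again satisfies the hypothesis. Let $v_s$ be the unique vertex of $T_s$ adjacent to $b$, and take $i\in V(T_s)$. The components of $G-\{i\}$ are obtained from the components of $T_s-\{i\}$ by enlarging the one containing $v_s$ with the set $V(G)\setminus V(T_s)$ when $i\neq v_s$, or by adjoining $V(G)\setminus V(T_s)$ as an extra component when $i=v_s$; in both cases the added set has even cardinality $n-|V(T_s)|$, so adjoining or absorbing it does not change the parity of any component count. Hence the number of odd components of $T_s-\{i\}$ equals that of $G-\{i\}$, which is one. By the induction hypothesis each $T_s$ has a perfect matching $M_s$, and $\{a,b\}\cup M_1\cup\dots\cup M_r$ is a perfect matching of $G$.

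I expect the parity bookkeeping in the converse to be the only genuine obstacle: one must correctly identify which component of $G-\{i\}$ absorbs the rest of the tree outside $T_s$ and then argue that, because $n-|V(T_s)|$ is even, deleting $i$ from $T_s$ produces exactly as many odd components as deleting $i$ from all of $G$. The rest — the base cases, the reduction via a leaf, and the assembly of the matching — is routine.
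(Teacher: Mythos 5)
Your proof is correct and follows essentially the same route as the paper's: the forward direction by locating the matching edge at $i$ and checking parities of the components, and the converse by induction after deleting a leaf $a$ and its neighbor $b$, transferring the hypothesis to each component of $G-\{a,b\}$ via the observation that the part of the tree outside that component has even size. Your packaging of the parity step (adjoining or absorbing an even-sized set preserves the count of odd components) is a slightly cleaner version of the paper's two-case computation with $|V(G^*_0)| = |V(G^*)| - (n - |V(G')|)$, but the underlying argument is the same.
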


\begin{proof}
	Suppose that $G$ admits a perfect matching $M = \{e_1, \ldots, e_m\}$, where $n = 2m$.  Let $i \in [n]$ belong to $e_j$ and $e_j = \{ i, i' \}$.  Let $G'$ be a connected component of $G-\{i\}$ 
	If $j' \neq j$ and $e_{j'} \cap V(G') \neq \emptyset$, then $e_{j'} \subseteq V(G')$.  In other words, if $j' \neq j$, then one has either $e_{j'} \subseteq V(G')$ or $e_{j'} \cap V(G') = \emptyset$.  Hence, if $i' \not\in V(G')$, then $|V(G')|$ is even.  On the other hand, if $i' \in V(G')$, then $e_j \cap V(G') = \{ i' \}$ and $|V(G')|$ is odd.  Thus, exactly one connected component of $G- \{i\}$, to which $i'$ belongs, consists of an odd number of vertices, as required.
	
	Conversely, suppose that for each $i \in [n]$, exactly one connected component of $G- \{i\}$ consists of an odd number of vertices.  In particular, $n$ is even.  Let, say, $1 \in [n]$ be a leaf and $\{1, 2\}$ be an edge of $G$.  The tree $H$ consisting of just one vertex $1$ is a connected component of $G- \{2\}$.  It then follows that each connected component $G'$ of $G- \{2\}$ with $G' \neq H$ consists of an even number of vertices.
	
	Now, we claim that $G'$ satisfies the condition that, for each $i \in V(G')$, exactly one connected component of $G'-\{i\}$ consists of an odd number of vertices.  Let $G^*, G_1, \ldots, G_q$ be the connected component of $G_{[n] \setminus \{i\}}$ with $2 \in V(G^*)$. Note that, in particular, if the vertex $i$ is adjacent to the vertex 2 in $G$, then $G_0^{*}$ is just the empty graph, (i.e. with no vertices). Then exactly one of $G^*, G_1, \ldots, G_q$ consists of an odd number of vertices.  Let $G^*_0$ be the connected components of $G^*-\{2\}$ for which $V(G^*_0) \subset V(G')$.  Then the connected components of $G'- \{i\}$ are $G^*_0, G_1, \ldots, G_q$.  A crucial fact is
	\[
	|V(G^*_0)| = |V(G^*)| - (n - |V(G')|).
	\]
	Furthermore, $n - |V(G')|$ is even.  If exactly one of $G_1, \ldots, G_q$ consists of an odd number of vertices, then $|V(G^*)|$ is even.  Hence $|V(G^*_0)|$ is even.  If each of $G_1, \ldots, G_q$ consists of an even number of vertices, then $|V(G^*)|$ is odd.  Hence $|V(G^*_0)|$ is odd.
	
	Thus, by using induction on the number of vertices, it follows that each connected component $G'$ of $G- \{2\}$ with $G' \neq H$ possesses a perfect matching.  Hence, together with $\{1, 2\}$, the tree $G$ possesses a perfect matching, as desired.
\end{proof}

	\section{Classification of forests $G$ with $\nu_0(G)\leq 2$}
	
	We are interested in determining all graphs with the property that $I(G)^{[2]}$ has linear resolution. In this section we give a partial answer to this question.
	
	We call a sequence of monomials $u_1,\ldots, u_m$ of same degree to have  {\em linear quotient order}, if $(u_1,\ldots,u_{j-1}):u_j$ is generated by variables for $j=2,\ldots,m$.
	
	Let $u$ and $v$ be monomials of same degree. We set
	\[
	u:v=\frac{u}{\gcd(u,v)}.
	\]
	
	Let $u_1,\ldots, u_m$ be a sequence of monomials of same degree. Let $1\leq \ell<j\leq m$. We say that $u_{\ell}:u_j$ satisfies property~$(*)$, if there exists $\ell'<j$ such that $u_{\ell'}:u_j$ is of degree~1 and $u_{\ell'}:u_j$ divides $u_{\ell}:u_j$. It is clear that the sequence $u_1,\ldots, u_m$ has linear quotient order if and only if $u_{\ell}:u_j$ satisfies property~$(*)$, for all $\ell$ and $j$ with $1\leq \ell<j\leq m$.
	
	Let $u_1,\ldots, u_m$ be sequence of squarefree monomials with linear quotient order. We fix an index $i$ and  introduce the new variables $x_{i1},\ldots,x_{ir}$. For each $j=1,\ldots ,m$, for which $x_i$ divides $u_j$, we set $u_{jk}=x_{ik}(u_j/x_i)$. In other words, $u_{jk}$ is obtained from $u_j$ by replacing $x_i$ by $x_{ik}$. Now we define a new sequence by replacing in $u_1,\ldots, u_m$  each $u_j$ for which $x_i$ divides $u_j$  by the sequence $u_{j1},\ldots, u_{jr}$. The new sequence obtained is called a {\em proliferation} of the sequence $u_1,\ldots, u_m$.
	
	The following example demonstrates this concept: given the sequence $u_1,u_2,u_3=x_1x_2, x_2x_3, x_1x_3$, we choose $i=1$ and the variables $x_{11},x_{12}$. Then the proliferation of $u_1,u_2,u_3$ with respect to these choices  is the sequence
	\[
	u_{11},u_{12},u_2, u_{31},u_{32}=x_{11}x_2, x_{12}x_2, x_2x_3, x_{11}x_3, x_{12}x_3.
	\]
	
	For the proof of the main result of this section the following lemma is useful.
	
	\begin{Lemma}\label{proliferation}
		Let  $u_1,\ldots, u_m$ be a sequence of squarefree monomials with linear quotient order. Then any proliferation of this sequence is again a sequence with linear quotient order.
	\end{Lemma}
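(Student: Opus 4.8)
The plan is to verify property~$(*)$ directly for every ordered pair of monomials occurring in a proliferation, by reducing the computation of the colon $a:b$ for two members $a,b$ of the new sequence to the colon $A:B$ of the corresponding members $A,B$ of the original sequence. Write the proliferation as obtained from $u_1,\dots,u_m$ by splitting the variable $x_i$ into $x_{i1},\dots,x_{ir}$, and let $\pi$ be the map sending each $u_{jk}$ to $u_j$ and each surviving $u_j$ (those with $x_i\nmid u_j$) to itself. The order of the new sequence refines the old order through $\pi$, and within a block $u_{j1},\dots,u_{jr}$ it is the order of increasing $k$; also, the monomials of a sequence with linear quotient order are necessarily distinct.

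First I would dispose of the case in which $a$ and $b$ lie in the same block, say $a=u_{pk}$ and $b=u_{pk'}$ with $k<k'$; here $a:b=x_{ik}$, a variable dividing itself, so $(*)$ holds with $a$ as the witness. In the remaining case $\pi(a)=A$ and $\pi(b)=B$ are distinct, and then $A$ precedes $B$ in the old sequence, hence also in the new one. A routine $\gcd$ computation, distinguishing according to whether $x_i$ divides $A$, whether $x_i$ divides $B$, and whether the block indices of $a$ and $b$ agree, shows that $a:b$ equals one of the following: $(\mathrm{a})$ $A:B$, and then $x_i\nmid A:B$; $(\mathrm{b})$ the monomial obtained from $A:B$ by replacing the letter $x_i$ by $x_{ik}$, which happens precisely when $x_i\mid A$ and $x_i\nmid B$, and then $x_i\mid A:B$; or $(\mathrm{c})$ $x_{ik}\cdot(A:B)$, which happens when $x_i\mid A$, $x_i\mid B$ and the two block indices differ, and then again $x_i\nmid A:B$.

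Next I would invoke the linear quotient order of $u_1,\dots,u_m$: since $A$ precedes $B$, there is an index $\ell'$ earlier than that of $B$ with $u_{\ell'}:B$ a single variable $x_t$ that divides $A:B$. The last step is to lift $u_{\ell'}$ to a witness $c$ in the new sequence: if $x_i\nmid u_{\ell'}$ take $c=u_{\ell'}$; if $x_i\mid u_{\ell'}$ take $c=u_{\ell',k}$ with $k$ the block index of $b$ when $b$ lies in a block, and the block index of $a$ in case $(\mathrm{b})$. In every case $c$ precedes $b$, because $\ell'$ precedes the index of $B$. A short check of the possibilities — using the observation that $x_t=x_i$ can occur only when $x_i\mid A:B$, i.e. only in case $(\mathrm{b})$, where $c:b$ works out to be exactly $x_{ik}$, and that $x_i\mid u_{\ell'}$ together with $x_i\nmid B$ already forces $x_t=x_i$ — confirms that $c:b$ is a variable dividing $a:b$. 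Hence $(*)$ holds for $(a,b)$, and the proliferation has linear quotient order.

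I expect the main obstacle to be the bookkeeping in the lifting step: choosing the block index of the witness $u_{\ell',k}$ so that the new variables cancel correctly against those of $b$, and verifying that the excluded configurations (notably $x_t=x_i$ together with $x_i\nmid A:B$, or $x_i\mid u_{\ell'}$ together with $x_i\nmid B$ and $x_t\neq x_i$) genuinely cannot arise. The $\gcd$ computations underlying the trichotomy in the second paragraph are elementary but must be carried out separately in each sub-case, since the presence or absence of $x_i$ in $A$, in $B$, and the agreement of the block indices each change the outcome.
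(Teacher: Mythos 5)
Your proposal is correct and follows essentially the same route as the paper: reduce property~$(*)$ for a pair in the proliferation to property~$(*)$ for the corresponding pair $A,B$ in the original sequence, and lift the witness $u_{\ell'}$ to the copy $u_{\ell'k}$ with the appropriate block index. The paper packages the same case analysis more compactly by observing that, for each fixed $k$, the subsequence obtained by replacing every $u_s$ divisible by $x_i$ with $u_{sk}$ is a relabelled copy of the original sequence (so $(*)$ transfers verbatim), and by handling the case of two distinct block indices $k_1\neq k_2$ via the divisibility of $u_{\ell k_1}:u_{jk_2}$ by $u_{\ell k_2}:u_{jk_2}$ rather than by a separate witness computation.
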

	
	\begin{proof}
		Let $\mathcal{S}=u_1,\ldots, u_m$, and let $\mathcal{S'}$ be a  proliferation sequence of $\mathcal{S}$ with respect to the index $i$. Let $u$ and $v$ be two elements of $\mathcal{S'}$ such that $u$ precedes $v$ in the order of the sequence. We distinguish several cases:
		
		Let $u=u_{\ell}$ and $v=u_{j}$. Then $\ell<j$ and $u:v$ satisfies property~$(*)$ in the subsequence of $\mathcal{S'}$ which is obtained from $\mathcal{S}$ by replacing $u_s$ by $u_{s1}$ whenever $x_i$ divides $u_s$.
		
		Let $u=u_{\ell k}$ and $v=u_{j}$, or $u=u_{\ell}$ and $v=u_{jk}$, or $u=u_{\ell k}$ and $v=u_{jk}$. Then $u:v$ satisfies property~$(*)$ in the subsequence of $\mathcal{S'}$ which is obtained from $\mathcal{S}$ by replacing $u_s$ by $u_{sk}$ whenever $x_i$ divides $u_s$.
		
		Let $u=u_{\ell k_1}$ and $v=u_{jk_2}$. If $\ell=j$, then $u:v$ is just the variable ${x_i}_{k_1}$. So suppose that $\ell<j$. Then $u:v$ is divisible by
		$u_{\ell k_2}:v$ which was discussed above that it satisfies property~$(*)$. Hence $u:v$ satisfies property~$(*)$ as well.
	\end{proof}
	
	Suppose that $a$ is a leaf of a graph $G$, and let  $e=(a,b)\in E(G)$. We construct a new graph $\tilde{G}$ from $G$ by adding the new edges   $e_1=(a_1,b),\ldots, e_t=(a_t,b)$, where the $a_i$'s are all leaves. We say that the resulting graph is obtained by {\em proliferating} the leaf $a$. As an immediate consequence of Lemma~\ref{proliferation}, we get the following
	
	\begin{Corollary}\label{whiskers}
		Let $G$ be a graph obtained by proliferating some of the leaves of a graph $H$. If
		$I(H)^{[k]}$ has linear quotients, then $I(G)^{[k]}$ has linear quotients. 	
	\end{Corollary}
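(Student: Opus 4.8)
The plan is to reduce to the case of proliferating a single leaf and then to apply Lemma~\ref{proliferation} to a linear-quotient-ordering of the minimal generators of $I(H)^{[k]}$, after checking that the corresponding proliferated sequence is exactly the minimal generating set of $I(G)^{[k]}$. First I would observe that proliferating several leaves can be carried out one at a time: if $G$ arises from $H$ by proliferating the leaves $a^{(1)},\dots,a^{(s)}$, then along the chain $H=H_0,H_1,\dots,H_s=G$ each $H_i$ is obtained from $H_{i-1}$ by proliferating a single vertex which is still a leaf of $H_{i-1}$, so it suffices to treat one leaf. Thus assume $G=\tilde H$ is obtained from $H$ by proliferating a leaf $a$ with unique neighbour $b$, adding edges $e_1=(a_1b),\dots,e_t=(a_tb)$ with $a_1,\dots,a_t$ new leaves; write $e=(ab)$.

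Next I would fix a linear-quotient-ordering $u_1,\dots,u_m$ of $G(I(H)^{[k]})$ and build the sequence $\mathcal{S}'$ from $u_1,\dots,u_m$ by replacing each $u_j$ divisible by the variable $a$ with the block $u_j,\ u_j(a_1/a),\dots,u_j(a_t/a)$. This $\mathcal{S}'$ is a proliferation of $u_1,\dots,u_m$ with respect to the variable $a$, using the $t+1$ substituted variables $a,a_1,\dots,a_t$; the first of these coincides with $a$ itself, but this is harmless, since nothing in the proof of Lemma~\ref{proliferation} uses that the substituted variables differ from $x_i$ (when $x_{i1}=x_i$ one simply has $u_{j1}=u_j$). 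Hence, by Lemma~\ref{proliferation}, $\mathcal{S}'$ has linear quotient order.

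The remaining (and main) step is the combinatorial dictionary identifying the monomials of $\mathcal{S}'$ with $G(I(G)^{[k]})$. Since $a,a_1,\dots,a_t$ are leaves of $G$ all with unique neighbour $b$, every $k$-matching $M$ of $G$ uses at most one of $e,e_1,\dots,e_t$. If $M$ uses none of them, then $M\subseteq E(H)\setminus\{e\}$ is a $k$-matching of $H$ with $a\nmid u_M$; if $M$ uses $e$, then $M$ is a $k$-matching of $H$ with $a\mid u_M$; and if $M$ uses some $e_i$, then $M\setminus\{e_i\}$ is a $(k-1)$-matching of $H$ covering neither $a$ nor $b$, so $(M\setminus\{e_i\})\cup\{e\}$ is a $k$-matching of $H$ whose monomial $u$ is divisible by $a$ and $u_M=u(a_i/a)$. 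Reading these three correspondences backwards shows that, conversely, every monomial occurring in $\mathcal{S}'$ is $u_M$ for some $k$-matching $M$ of $G$. As all these monomials are squarefree of degree $2k$, they are pairwise distinct (the $a_i$ being new variables) and no one divides another, so $\mathcal{S}'$ is precisely the minimal generating set $G(I(G)^{[k]})$, arranged in linear quotient order; therefore $I(G)^{[k]}$ has linear quotients.

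I expect this last identification to be the only real obstacle: once the matching bookkeeping is done carefully in both directions — so that proliferating the leaf at the level of the ideal is seen to coincide exactly with the formal proliferation of the generator sequence — Lemma~\ref{proliferation} immediately finishes the argument.
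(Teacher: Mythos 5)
Your proof is correct and follows the paper's intended route: the paper states this corollary as an ``immediate consequence'' of Lemma~\ref{proliferation} without further argument, and you supply exactly the details that are being suppressed, namely the reduction to one leaf at a time, the observation that one of the substituted variables may be taken to be $a$ itself (harmless, since it is just a renaming of variables), and the bijection between $k$-matchings of $\tilde H$ and the terms of the proliferated generator sequence. Nothing to add.
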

	
	Now we consider three types of forests which we call the graphs of types $G_1$, $G_2$ and $G_3$. These types of graphs are important for us concerning our following classification of forests with $\nu_0(G)\leq 2$.
	
	Let $A=\{a_1,\ldots,a_t\}$, $B=\{b_1,\ldots,b_s\}$ and $C=\{c_1,\ldots,c_r\}$. Then a graph of type $G_1$ is a graph with $V(G_1)=A\cup B \cup C\cup \{x_1,x_2,x_3\}$ and the edges
	\[
	(x_1x_2),(x_2x_3),(a_1x_1),\ldots,(a_tx_1),
	(b_1x_2),\ldots,(b_sx_2),(c_1x_3),\ldots,(c_rx_3).
	\]
	Each of $A$, $B$ and $C$ can be empty. The graph shown in Figure~\ref{G1} is an example of a graph of type $G_1$ with $t=3$, $s=2$ and $r=4$. 
	
	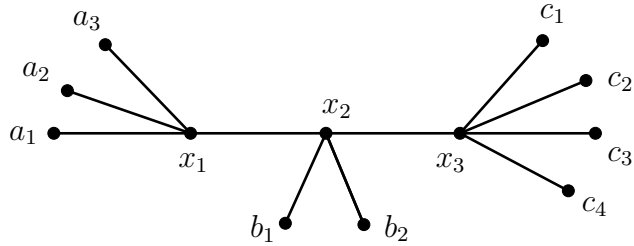
\begin{figure} [h!]
		\centering
		\begin{tikzpicture}[scale = 0.9]
		\draw [line width=1pt] (10.98,-1.06)-- (13,-1.06);
		\draw [line width=1pt] (13,-1.06)-- (15,-1.06);
		\draw [line width=1pt] (15,-1.06)-- (16.98,-1.06);
		\draw (12.66,-1.2) node[anchor=north west] {$x_1$};
		\draw (14.78,-0.4) node[anchor=north west] {$x_2$};
		\draw (16.46,-1.2) node[anchor=north west] {$x_3$};
		\draw (10.16,-0.79) node[anchor=north west] {$a_1$};
		\draw [line width=1pt] (16.98,-1.06)-- (18.98,-1.06);
		\draw (18,1) node[anchor=north west] {$c_1$};
		\draw [line width=1pt] (15,-1.06)-- (15.56,-2.41);
		\draw (15.7,-2.1) node[anchor=north west] {$b_2$};
		\draw [line width=1pt] (13,-1.06)-- (11.18,-0.43);
		\draw [line width=1pt] (13,-1.06)-- (11.74,0.25);
		\draw [line width=1pt] (15.56,-2.41)-- (15,-1.06);
		\draw [line width=1pt] (15,-1.06)-- (14.4,-2.39);
		\draw [line width=1pt] (16.98,-1.06)-- (18.84,-0.28);
		\draw [line width=1pt] (16.98,-1.06)-- (18.2,0.31);
		\draw (13.72,-2.11) node[anchor=north west] {$b_1$};
		\draw (10.35,0.15) node[anchor=north west] {$a_2$};
		\draw (11.1,0.9) node[anchor=north west] {$a_3$};
		\draw (19,0) node[anchor=north west] { $c_2$};
		\draw (19,-1.1) node[anchor=north west] {$c_3$};
		\draw [line width=1pt] (16.98,-1.06)-- (18.58,-1.91);
		\draw (18.62,-1.85) node[anchor=north west] {$c_4$};
		\begin{scriptsize}
		\draw [fill=black] (10.98,-1.06) circle (2.5pt);
		\draw [fill=black] (13,-1.06) circle (2.5pt);
		\draw [fill=black] (15,-1.06) circle (2.5pt);
		\draw [fill=black] (16.98,-1.06) circle (2.5pt);
		\draw [fill=black] (18.98,-1.06) circle (2.5pt);
		\draw [fill=black] (15.56,-2.41) circle (2.5pt);
		\draw [fill=black] (11.18,-0.43) circle (2.5pt);
		\draw [fill=black] (11.74,0.25) circle (2.5pt);
		\draw [fill=black] (14.4,-2.39) circle (2.5pt);
		\draw [fill=black] (18.84,-0.28) circle (2.5pt);
		\draw [fill=black] (18.2,0.31) circle (2.5pt);
		\draw [fill=black] (18.58,-1.91) circle (2.5pt);
		\end{scriptsize}
		\end{tikzpicture}
		\caption{A graph of type $G_1$}
		\label{G1}
	\end{figure}
	Let $A=\{a_1,\ldots,a_t\}$ and $B=\{b_1,\ldots,b_s\}$ be two non-empty sets of vertices. Then a graph of type $G_2$ is a graph with $V(G)=A\cup B\cup \{x_1,x_2,x_3,x_4\}$ and the edges
	\[
	(x_1x_2),(x_2x_3),(x_3x_4),(a_1x_1),\ldots,(a_tx_1),
	(b_1x_4),\ldots,(b_sx_4).
	\]
	The graph shown in Figure~\ref{G2} is an example of a graph of type $G_2$ with $t=4$ and $s=2$. 
	
	\begin{figure}[h!]
		\centering
		\begin{tikzpicture}[scale=0.9]
		\draw [line width=1pt] (10.88,-1.04)-- (12.9,-1.04);
		\draw [line width=1pt] (12.9,-1.04)-- (14.9,-1.04);
		\draw [line width=1pt] (14.9,-1.04)-- (16.82,-1.03);
		\draw (12.56,-1.2) node[anchor=north west] {$x_1$};
		\draw (14.62,-1.2) node[anchor=north west] {$x_2$};
		\draw (16.68,-1.2) node[anchor=north west] {$x_3$};
		\draw (10.06,-0.77) node[anchor=north west] {$a_1$};
		\draw [line width=1pt] (16.82,-1.03)-- (18.88,-1.04);
		\draw (20.64,0.53) node[anchor=north west] {$b_2$};
		\draw [line width=1pt] (12.9,-1.04)-- (11.08,-0.41);
		\draw [line width=1pt] (12.9,-1.04)-- (11.64,0.27);
		\draw (21,-0.73) node[anchor=north west] {$b_1$};
		\draw (10.28,0.17) node[anchor=north west] {$a_2$};
		\draw (11,0.95) node[anchor=north west] {$a_3$};
		\draw (11.78,1.4) node[anchor=north west] {$a_4$};
		\draw [line width=1pt] (18.88,-1.04)-- (20.74,-1.03);
		\draw (18.58,-1.2) node[anchor=north west] {$x_4$};
		\draw [line width=1pt] (18.88,-1.04)-- (20.46,-0.11);
		\draw [line width=1pt] (12.9,-1.04)-- (12.4,0.65);
		\begin{scriptsize}
		\draw [fill=black] (10.88,-1.04) circle (2.5pt);
		\draw [fill=black] (12.9,-1.04) circle (2.5pt);
		\draw [fill=black] (14.9,-1.04) circle (2.5pt);
		\draw [fill=black] (16.82,-1.03) circle (2.5pt);
		\draw [fill=black] (18.88,-1.04) circle (2.5pt);
		\draw [fill=black] (11.08,-0.41) circle (2.5pt);
		\draw [fill=black] (11.64,0.27) circle (2.5pt);
		\draw [fill=black] (20.74,-1.03) circle (2.5pt);
		\draw [fill=black] (20.46,-0.11) circle (2.5pt);
		\draw [fill=black] (12.4,0.65) circle (2.5pt);
		\end{scriptsize}
		\end{tikzpicture}
		\caption{A graph of type $G_2$}
		\label{G2}
	\end{figure}
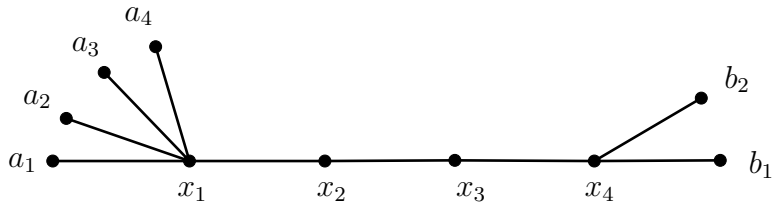
	
	A graph of type $G_3$ is just the disjoint union of two star graphs. Note that any edge is also a star graph.
	
	\medskip
	In the following theorem, $P_n$ denotes the path graph with $n$ vertices, and hence with length~$n-1$. 
	
	\begin{Theorem}\label{tree characterization}
		Let $G\neq P_2$ be a forest with no isolated vertices. Then the following conditions  are equivalent:
		\begin{enumerate}
			\item[(i)] $I(G)^{[2]}$ has linear quotients.
			\item[(ii)] $I(G)^{[2]}$ has linear resolution.
			\item[(iii)] $I(G)^{[2]}$ is linearly related.
			\item[(iv)] $\nu_0(G)\leq 2$.
			\item[(v)] $G$ is of type $G_1$, $G_2$ or $G_3$.
		\end{enumerate}
	\end{Theorem}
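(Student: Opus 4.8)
The plan is to prove the cyclic chain $(i)\Rightarrow(ii)\Rightarrow(iii)\Rightarrow(iv)\Rightarrow(v)\Rightarrow(i)$. The first two implications are formal: for an ideal generated in a single degree, linear quotients force a linear resolution, and a linear resolution forces the first syzygies to be linear. For $(iii)\Rightarrow(iv)$ I would use Section~3: by Theorem~\ref{next} the set of exponents $k$ for which $I(G)^{[k]}$ is linearly related is closed upward, so $\lambda(I(G))$ is simply the least such $k$; from $I(G)^{[2]}$ being linearly related we get $\lambda(I(G))\le 2$, and combining this with $\lambda(I(G))\ge\nu_0(G)$ from \cite[Lemma~5.2]{BHZ} yields $\nu_0(G)\le 2$. (The reverse implication $(iv)\Rightarrow(iii)$ is already contained in Theorem~\ref{true}; it is not needed to close the cycle but makes the chain robust.)

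The implication $(iv)\Rightarrow(v)$ is the structural core, and I would prove it contrapositively: a forest $G$ with no isolated vertex and $G\neq P_2$ that is \emph{not} of type $G_1$, $G_2$ or $G_3$ has $\nu_0(G)\ge 3$. Throughout I would use that $\nu_0$ is monotone under passing to induced subgraphs --- a gap of an induced subgraph is a gap of the ambient graph, since the ambient graph contributes no further edges on those four vertices --- so that exhibiting a single restricted $3$-matching already gives $\nu_0\ge 3$. If $G$ has three connected components each containing an edge, then one edge per component is a restricted $3$-matching; since $G$ has no isolated vertex it has at most two components, and if it has exactly two, then each must have matching number $1$ (otherwise two disjoint edges of one component together with an edge of the other form a restricted $3$-matching), hence each component is a star and $G$ is of type $G_3$. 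So assume $G$ is a tree $T$. Since a path on seven vertices contains the restricted $3$-matching $\{(12),(45),(67)\}$ with special edge $(12)$, and a longest path of a tree is an induced subgraph, $\nu_0(T)\le 2$ forces the diameter of $T$ to be at most $5$. A case analysis on the diameter then finishes the argument: diameter $1$ is $P_2$; diameter $2$ is a star, which is of type $G_1$; for diameter $3$ or $4$, either $T$ is of type $G_1$, or a longest path carries a pendant path of length $2$ at an interior vertex and one reads off a restricted $3$-matching; for diameter $5$, either $T$ is of type $G_2$, or a longest path has an off-path neighbour at one of its two central vertices and one again reads off a restricted $3$-matching.

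For $(v)\Rightarrow(i)$: if $G$ is of type $G_3$, then $I(G)^{[2]}$ is a single monomial times a product of two monomial ideals generated by disjoint sets of variables, hence has linear quotients; if $G$ is of type $G_1$ with one of the sets $A$, $B$, $C$ empty, then $\nu(G)\le 2$ and the conclusion is Theorem~\ref{highest power} (or $I(G)^{[2]}=(0)$); and the remaining cases --- type $G_1$ with $A$, $B$, $C$ all nonempty, and type $G_2$ --- follow from Corollary~\ref{whiskers}, since every such graph is obtained by proliferating leaves of one of the two ``core'' graphs: the path $x_1-x_2-x_3$ with one leaf attached at each of $x_1$, $x_2$, $x_3$, and $P_6$. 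For these two small graphs one checks by hand that $I(H)^{[2]}$ has linear quotients (for the first, the five generators correspond to the $2$-matchings formed by the two spine edges and the three leaves). The main obstacle is the classification step $(iv)\Rightarrow(v)$: one must pin down the correct minimal ``forbidden'' forests and verify that every forest outside the three families contains one of them --- a careful but elementary analysis on the number of components, the diameter, and the pendant structure; the remaining implications are comparatively routine given Section~3 and the proliferation lemma.
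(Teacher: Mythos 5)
Your proposal is correct and follows essentially the same route as the paper: the trivial implications, \cite[Lemma~5.2]{BHZ} for (iii)$\Rightarrow$(iv), the $P_7$-free/longest-path case analysis for (iv)$\Rightarrow$(v), and reduction via the proliferation lemma (Corollary~\ref{whiskers}) to a hand check of linear quotients on the small core graphs for (v)$\Rightarrow$(i). The only (cosmetic) differences are that you dispatch the type-$G_1$ cases with an empty part and the type-$G_3$ case by direct arguments ($\nu(G)\le 2$ plus Theorem~\ref{highest power}, resp.\ the product structure of the ideal) where the paper uniformly invokes proliferation from the core graphs $H$, $H'$ and from two disjoint edges.
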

	
	\begin{proof}
		(i)\implies (ii)\implies (iii) are trivial, and
		(iii) \implies (iv) follows from \cite[Lemma~5.2]{BHZ}.
		
		(iv) \implies (v): Note that $\nu_0(P_7)=3$, which implies that $G$ is $P_7$-free. Therefore the length of a longest path in $G$ is at most~5.
		
		First we assume that $G$ is connected. If the longest path in $G$ has length~2, then $G$ is of type of $G_1$ with $A=C=\emptyset$. If the length of a longest path in $G$ is~3, then $G$ is of type $G_1$ with $A\neq \emptyset$ and $C=\emptyset$. Suppose the length of a longest path in $G$ is~4. Then $G$ is of type $G_1$ with $A,C\neq \emptyset$. Note that if $B\neq \emptyset$, then all of its elements are leaves of $G$, since otherwise $\nu_0(G)\geq 3$, a contradiction. Suppose a longest path in $G$ is of length~5. Assume that $P:x_1,x_2,x_3,x_4,x_5,x_6$ is a path of $G$. Then the vertices $x_3$ and $x_4$ are of degree~2, since otherwise $\nu_0(G)\geq 3$, a contradiction. Therefore, $G$ is of type $G_2$.
		
		Next, we assume that $G$ is disconnected. Since $\nu_0(G)\leq 2$ and there is no isolated vertices in $G$, it follows that $G$ has exactly two components. On the other hand, each of the connected components has the matching number equal to~1, since otherwise $\nu_0(G)\geq 3$, a contradiction. Therefore, it follows that $G$ has to be a disjoint union of two star graphs, and hence $G$ is of type $G_3$.
		
		(v) \implies (i): We may assume that $\nu(G)>1$, because otherwise $I(G)^{[2]}=(0)$.  First note that, depending on whether the sets $A$, $B$ and $C$ are empty or not, the graphs of type $G_1$, by symmetry, are obtained by proliferating some leaves in one of  the graphs displayed in Figure~\ref{linear quotients}.
		
		\usetikzlibrary{arrows}
		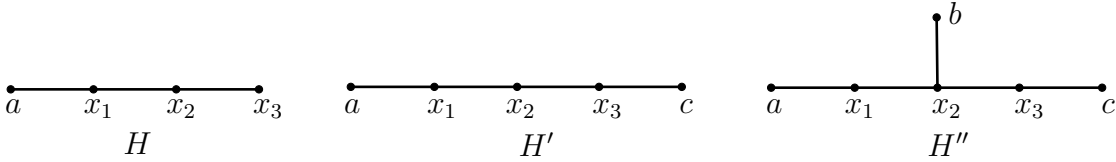
\begin{figure}[h!]
			\begin{tikzpicture}[scale = 0.55]
			\draw [line width=1pt] (3.6,0.98)-- (5.6,0.98);
			\draw [line width=1pt] (5.6,0.98)-- (7.6,0.98);
			\draw [line width=1pt] (7.6,0.98)-- (9.6,0.98);
			\draw (5.1,1) node[anchor=north west] {$x_1$};
			\draw (7.1,1) node[anchor=north west] {$x_2$};
			\draw (3.2,1) node[anchor=north west] {$a$};
			\draw [line width=1pt] (11.82,1.04)-- (13.84,1.04);
			\draw [line width=1pt] (13.84,1.04)-- (15.84,1.04);
			\draw [line width=1pt] (15.84,1.04)-- (17.82,1.04);
			\draw (13.4,1) node[anchor=north west] {$x_1$};
			\draw (15.3,1) node[anchor=north west] {$x_2$};
			\draw (17.4,1) node[anchor=north west] {$x_3$};
			\draw (11.4,1) node[anchor=north west] {$a$};
			\draw [line width=1pt] (17.82,1.04)-- (19.82,1.04);
			\draw (19.5,1) node[anchor=north west] {$c$};
			\draw [line width=1pt] (21.98,1.02)-- (24,1.02);
			\draw [line width=1pt] (24,1.02)-- (26,1.02);
			\draw [line width=1pt] (26,1.02)-- (27.98,1.02);
			\draw (23.5,1) node[anchor=north west] {$x_1$};
			\draw (25.6,1) node[anchor=north west] {$x_2$};
			\draw (27.6,1) node[anchor=north west] {$x_3$};
			\draw (21.6,1) node[anchor=north west] {$a$};
			\draw [line width=1pt] (27.98,1.02)-- (29.98,1.02);
			\draw (29.7,1) node[anchor=north west] {$c$};
			\draw (9.2,1) node[anchor=north west] {$x_3$};
			\draw (6.06,0.2) node[anchor=north west] {$H$};
			\draw (15.62,0.2) node[anchor=north west] {$H'$};
			\draw (25.5,0.2) node[anchor=north west] {$H''$};
			\draw [line width=1pt] (26,1.02)-- (25.98,2.72);
			\draw (26,3.36) node[anchor=north west] {$b$};
			\begin{scriptsize}
			\draw [fill=black] (3.6,0.98) circle (2.5pt);
			\draw [fill=black] (5.6,0.98) circle (2.5pt);
			\draw [fill=black] (7.6,0.98) circle (2.5pt);
			\draw [fill=black] (9.6,0.98) circle (2.5pt);
			\draw [fill=black] (11.82,1.04) circle (2.5pt);
			\draw [fill=black] (13.84,1.04) circle (2.5pt);
			\draw [fill=black] (15.84,1.04) circle (2.5pt);
			\draw [fill=black] (17.82,1.04) circle (2.5pt);
			\draw [fill=black] (19.82,1.04) circle (2.5pt);
			\draw [fill=black] (21.98,1.02) circle (2.5pt);
			\draw [fill=black] (24,1.02) circle (2.5pt);
			\draw [fill=black] (26,1.02) circle (2.5pt);
			\draw [fill=black] (27.98,1.02) circle (2.5pt);
			\draw [fill=black] (29.98,1.02) circle (2.5pt);
			\draw [fill=black] (25.98,2.72) circle (2.5pt);
			\end{scriptsize}
			\end{tikzpicture}
			\caption{The graphs $H$, $H'$ and $H''$}
			\label{linear quotients}
		\end{figure}
		
		The matching number of $H$ and $H'$ is equal to 2, and hence by Theorem~\ref{highest power}, $I(H)^{[2]}$ and $I(H')^{[2]}$ have linear quotients. The ideal $I(H'')^{[2]}$ has also linear quotients given by the following ordering of its generators:
		\[
		(ax_1)(x_2b), (ax_1)(x_2x_3), (ax_1)(x_3c), (x_1x_2)(x_3c),(x_2b)(x_3c).
		\]
		
		On the other hand, the graphs of type $G_2$ are obtained by proliferating some leaves in the graph $\tilde{H}$ depicted in Figure~\ref{linear quotients2}. 
		
		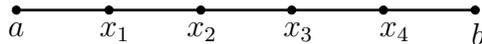
\begin{figure}[h!]
			\centering 
			\begin{tikzpicture}[scale = 0.61]
			\draw [line width=1pt] (11.82,1.04)-- (13.84,1.04);
			\draw [line width=1pt] (13.84,1.04)-- (15.84,1.04);
			\draw [line width=1pt] (15.84,1.04)-- (17.82,1.04);
			\draw [line width=1pt] (19.82,1.04)-- (21.82,1.04);
			\draw (13.4,1) node[anchor=north west] {$x_1$};
			\draw (15.3,1) node[anchor=north west] {$x_2$};
			\draw (17.4,1) node[anchor=north west] {$x_3$};
			\draw (11.4,1) node[anchor=north west] {$a$};
			\draw [line width=1pt] (17.82,1.04)-- (19.82,1.04);
			\draw (19.5,1) node[anchor=north west] {$x_4$};
			\draw (21.5,1) node[anchor=north west] {$b$};
			\begin{scriptsize}
			\draw [fill=black] (11.82,1.04) circle (2.5pt);
			\draw [fill=black] (13.84,1.04) circle (2.5pt);
			\draw [fill=black] (15.84,1.04) circle (2.5pt);
			\draw [fill=black] (17.82,1.04) circle (2.5pt);
			\draw [fill=black] (19.82,1.04) circle (2.5pt);
			\draw [fill=black] (21.82,1.04) circle (2.5pt);
			\end{scriptsize}
			\end{tikzpicture}
			\caption{The graph $\tilde{H}$}
			\label{linear quotients2}
		\end{figure}

		The second squarefree power of the edge ideal of $\tilde{H}$ has linear quotients given by the following order of its generators:
		\[
		(ax_1)(x_2x_3), (ax_1)(x_3x_4), (ax_1)(x_4b), (x_1x_2)(x_3x_4),(x_1x_2)(x_4b), (x_2x_3)(x_4b).
		\]
		The graphs of type $G_3$ are obtained by proliferating some leaves in a disjoint union of two edges. In this case the second squarefree power of the edge ideal is a principal ideal.
		
		Therefore, Corollary~\ref{whiskers} implies that $I(G)^{[2]}$ has linear quotients, as desired.
	\end{proof}

	\section{The squarefree Ratliff property}

	Ratliff in his paper \cite{R}  showed that for any ideal $I$ in a Noetherian ring, $I^{k+1} : I = I^k$ for $k\gg 0$, and that $I^{k+1} : I = I^k$
	for all $k$ if $I$ is a normal ideal.  Independent of normality, Mart\'{i}nez-Bernal, Morey and Villarreal \cite{MMV} showed that for
	any edge ideal $I$ one has $I^{k+1} : I = I^k$ for all $k$. An ideal satisfying this equality for all $k$ is  said to satisfy the {\em Ratliff property}. Besides of edge ideals, any ideal whose all powers have linear resolution satisfy the Ratliff property, as shown in \cite{HQ}. For squarefree powers of monomial ideals the behaviour is quite different. We say that a squarefree monomial ideal satisfies the {\em squarefree Ratliff property}, if $I^{[k]}:I=I^{[k]}$ for all $k\geq 2$.
	
	Surprisingly one has
	
	\begin{Theorem}\label{Surprised}
		Any nonzero squarefree monomial ideal satisfies the squarefree Ratliff property. 
	\end{Theorem}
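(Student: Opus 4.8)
The plan is to establish both inclusions of $I^{[k]}:I=I^{[k]}$. The inclusion $I^{[k]}\subseteq I^{[k]}:I$ is automatic, since $I^{[k]}$ is an ideal. For the reverse inclusion it suffices to take a monomial $u$ with $uI\subseteq I^{[k]}$ and show $u\in I^{[k]}$. I would first record the elementary observation that for any ideal $J$ generated by squarefree monomials, a monomial $m$ lies in $J$ if and only if its squarefree part $\overline{m}=\prod_{x_i\mid m}x_i$ lies in $J$: one direction is clear since $\overline{m}\mid m$, and the other because a squarefree generator of $J$ dividing $m$ necessarily divides $\overline{m}$. Applying this with $J=I^{[k]}$, and using that $\overline{uv}=\overline{\,\overline{u}\,v\,}$ for every $v\in G(I)$ (both sides have support $\supp(u)\cup\supp(v)$), we get $uv\in I^{[k]}\iff \overline{u}\,v\in I^{[k]}$; hence $\overline{u}\,I\subseteq I^{[k]}$, and since $\overline{u}\mid u$ it is enough to prove $\overline{u}\in I^{[k]}$. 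So we may and do assume that $u$ is squarefree.

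Next I would split into two cases according to whether $u\in I$. If $u\in I$, choose a generator $w\in G(I)$ dividing $u$ and apply the hypothesis with $v=w$: since $u$ is squarefree and $w\mid u$, the monomial $uw$ has the same support as $u$, so $\overline{uw}=u$; as $uw\in I^{[k]}$, the squarefree-part observation yields $u\in I^{[k]}$, as wanted. The whole content of the proof is therefore to exclude the case $u\notin I$.

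Suppose, for contradiction, that $u\notin I$. For $w\in G(I)$ put $N(w)=|\,\supp(w)\setminus\supp(u)\,|$; since $u\notin I$, no generator of $I$ divides $u$, so $\supp(w)\not\subseteq\supp(u)$ and $N(w)\geq 1$ for all $w\in G(I)$. As $I\neq 0$ we may pick $v^{*}\in G(I)$ with $N(v^{*})$ minimal. From $uv^{*}\in I^{[k]}\subseteq I^{k}$ we obtain pairwise coprime generators $g_1,\dots,g_k\in G(I)$ with $g_1\cdots g_k\mid uv^{*}$ (a squarefree generator of $I^{[k]}$ dividing $uv^{*}$ is a product of $k$ generators of $I$, and squarefreeness forces these to be pairwise coprime). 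For each $i$ we have $\supp(g_i)\subseteq\supp(u)\cup\supp(v^{*})$ and $\supp(g_i)\not\subseteq\supp(u)$, so $\supp(g_i)\setminus\supp(u)$ is a nonempty subset of $\supp(v^{*})\setminus\supp(u)$; moreover these subsets are pairwise disjoint because the $g_i$ have pairwise disjoint supports. As $k\geq 2$, the set $\supp(g_2)\setminus\supp(u)$ is nonempty and disjoint from $\supp(g_1)\setminus\supp(u)$, so $\supp(g_1)\setminus\supp(u)$ is a proper subset of $\supp(v^{*})\setminus\supp(u)$. Hence $N(g_1)<N(v^{*})$ with $g_1\in G(I)$, contradicting the minimality of $N(v^{*})$. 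Therefore $u\in I$, and by the previous paragraph $u\in I^{[k]}$, which finishes the proof.

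The only place where the hypothesis $k\geq 2$ is genuinely used is the production of a second coprime factor $g_2$ forcing the containment to be strict, which is consistent with the failure of the statement for $k=1$ (e.g.\ $I=(x_1x_2,\,x_2x_3)$). I expect no real obstacle beyond this descent on $N(w)$; the remainder is careful bookkeeping with supports and squarefree parts, where the only thing to watch is the harmless passage between a monomial, its squarefree part, and membership in $I^{[k]}$.
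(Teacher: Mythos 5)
Your proof is correct and follows essentially the same route as the paper's: reduce to a squarefree $u$, dispose of the case $u\in I$ by passing to the squarefree part of $uw$ for a generator $w$ dividing $u$, and in the case $u\notin I$ run a descent on $|\supp(v)\setminus\supp(u)|$ using that for $k\geq 2$ the $k$ pairwise coprime factors of a generator of $I^{[k]}$ dividing $uv$ each contribute a nonempty, pairwise disjoint piece of $\supp(v)\setminus\supp(u)$. Your extremal packaging (choosing $v^{*}$ with $N(v^{*})$ minimal and exhibiting $g_1\in G(I)$ with $N(g_1)<N(v^{*})$) is simply a fully explicit form of the induction that the paper only sketches.
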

	
	\begin{proof}
		Let $I$ be a nonzero squarefree monomial ideal. First note that for all $k$, we have $I^{[k]}\subseteq I^{[k]}:I$. Conversely, let $u\in G(I^{[k]}:I)$ and assume that $u\notin I^{[k]}$. Since $I^{[k]}:I$ is a squarefree monomial ideal, $u$ is squarefree. 
		
		First assume that $u\in I$. Then there exists some $v\in G(I)$ and a monomial $w\in S$ such that $u=vw$. Since $u\in G(I^{[k]}:I)$, it follows that $vu=wv^2\in I^{[k]}$. Since $I^{[k]}$ is a squarefree monomial ideal, we  deduce that $u=wv\in I^{[k]}$, a contradiction. 
		
		Next, suppose that $u\notin I$. Let $v\in G(I)$. Since $u\in G(I^{[k]}:I)$, it follows that $uv\in I^{[k]}$. Suppose that $v=v'v''$ where $\gcd(v',u)=1$ and $v''$ divides $u$. We have $uv'\in I^{[k]}$, since $I^{[k]}$ is a squarefree monomial ideal. 
		Therefore, there exist $w_1,\ldots, w_k\in G(I)$ and a monomial $w$ such that $uv'=w_1\cdots w_k w$,  where $\supp (w_i)$'s are pairwise disjoint. Since $u\in I^{[k]}:I$, we have $uw_i\in I^{[k]}$ for all $i=1,\ldots,k$. Then, by induction on the cardinality of the set
		$\supp (v)\setminus \supp(u)$, we get the contradiction to the assumption $u\notin I^{[k]}$, and hence the result follows.    	
	\end{proof}
	
	Motivated by Theorem~\ref{Surprised}, one may ask whether we also have $I^{[k]}:I^{[\ell]}=I^{[k]}$, for any squarefree monomial ideal $I$ and integers $1< \ell<k$. Observe that if $I^{[k]}:I^{[\ell]}=I^{[k]}$, then $I^{[k]}:I^{[\ell-1]}=I^{[k]}$. Indeed,   
	\[
	I^{[k]} \subseteq I^{[k]} : I^{[\ell-1]} \subseteq I^{[k]} : I^{[\ell]} = I^{[k]}.
	\]
	
	Regarding the above question, we have partial answers when $I$ is the edge ideal of a graph. 
	
	\begin{Theorem}
		\label{easyratliff}
		Let $G$ be a graph with  no isolated vertex.  Let $k$ be an integer with $2 < k \leq \nu(G)$.  Then
		\[
		I(G)^{[k]} : I(G)^{[2]} = I(G)^{[k]}.
		\]
	\end{Theorem}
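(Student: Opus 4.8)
The plan is to prove the nontrivial inclusion $I(G)^{[k]}:I(G)^{[2]}\subseteq I(G)^{[k]}$ by a purely combinatorial argument about matchings of induced subgraphs, of the same flavour as the proof of Theorem~\ref{Surprised}. Since the colon of a squarefree monomial ideal by any monomial ideal is again squarefree, it suffices to show that an arbitrary squarefree monomial $u\in I(G)^{[k]}:I(G)^{[2]}$ lies in $I(G)^{[k]}$. Writing $W=\supp(u)$, so that $u=\prod_{x\in W}x$, the membership $u\in I(G)^{[k]}$ is equivalent to $\nu(G_W)\ge k$ (using that $I(G)^{[k]}$ is radical, so a monomial lies in it precisely when its squarefree part does), while the assumption $u\in I(G)^{[k]}:I(G)^{[2]}$ says exactly that $\nu\bigl(G_{W\cup e\cup f}\bigr)\ge k$ for every $2$-matching $\{e,f\}$ of $G$.

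The key reduction is the claim that $\nu(G_W)\ge 2$. Granting it, choose a $2$-matching $\{e,f\}$ with $e,f\subseteq W$; then $W\cup e\cup f=W$, and applying the hypothesis to $u_{\{e,f\}}$ gives $\nu(G_W)\ge k$, as desired. To prove the claim I argue by contradiction: assuming $\nu(G_W)\le 1$, I will produce one $2$-matching $\{e,f\}$ of $G$ with $\nu\bigl(G_{W\cup e\cup f}\bigr)\le 2<k$, using throughout that $\nu(G)\ge k\ge 3$ provides plenty of edges away from $W$, together with the elementary bound $\nu(G_{W'})\le \nu(G_W)+|W'\setminus W|$.

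If $\nu(G_W)=0$, then $W$ is an independent set and, since $G$ has no isolated vertex, every vertex of $W$ has a neighbour outside $W$. If two distinct vertices $x_1,x_2\in W$ admit neighbours $y_1\ne y_2$, then the $2$-matching $\{x_1y_1,x_2y_2\}$ adds only the two vertices $y_1,y_2$, so $\nu\bigl(G_{W\cup\{y_1,y_2\}}\bigr)\le 2$; otherwise all edges meeting $W$ go to a single vertex $y_0$ and every vertex of $W$ has degree $1$, so $W$ is isolated in $G-y_0$, hence $\nu(G-y_0)\ge\nu(G)-1\ge 1$ and there is an edge $f=rs$ of $G$ avoiding $W\cup\{y_0\}$; then $\{x_1y_0,rs\}$ is a $2$-matching and in $G_{W\cup\{y_0,r,s\}}$ the vertices of $W$ are adjacent only to $y_0$, which forces $\nu\le 2$. (When $|W|\le 1$ the conclusion is trivial, since then $W\cup e\cup f$ has at most five vertices.) If $\nu(G_W)=1$, then $G_W$ is a star or a triangle together with isolated vertices; pick an edge $e_0\subseteq W$ for which $W\setminus V(e_0)$ is independent in $G$, set $e=e_0$, and look for a suitable $f$ disjoint from $e_0$: if some vertex of $W\setminus V(e_0)$ has a neighbour off $V(e_0)$ we get an $f$ meeting $W$, so $\nu$ grows by at most one; otherwise all edges from $W\setminus V(e_0)$ land in $V(e_0)$, and choosing $f=rs$ in $G-V(e_0)$ (possible as $\nu(G-V(e_0))\ge\nu(G)-2\ge 1$) one checks that any matching of $G_{W\cup\{r,s\}}$ of size $\ge 3$ would restrict to a $2$-matching inside $G_W$, contradicting $\nu(G_W)=1$.

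The step I expect to be the main obstacle is precisely this production of a $2$-matching whose addition does not raise the matching number of the induced subgraph above $2$: the naive estimate only gives a bound of $4$ when $\nu(G_W)=0$, which is useless for $k=3,4$, so one really has to exploit that small $\nu(G_W)$ forces the edges leaving $W$ to be concentrated at one or two ``hub'' vertices, and then use that $\nu(G)\ge 3$ supplies an edge far from those hubs. The case $\nu(G_W)=1$, where one additionally has to decide which edge of the star or triangle to keep, is the most delicate part of the bookkeeping.
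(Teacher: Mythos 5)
Your proposal is correct and follows essentially the same route as the paper: both reduce the problem to showing $\nu(G_W)\geq 2$ for the support $W$ of a squarefree monomial in the colon ideal (after which a $2$-matching inside $W$ immediately forces $\nu(G_W)\geq k$), and both prove this by contradiction, exhibiting a $2$-matching $\{e,f\}$ of $G$ with $\nu(G_{W\cup e\cup f})\leq 2<k$ when $\nu(G_W)\leq 1$. The only difference is in the bookkeeping used to produce that $2$-matching --- the paper perturbs an edge of a $3$-matching of $G$, while you case-split on how the edges leaving $W$ concentrate at hub vertices --- and your version of this step checks out.
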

	
	\begin{proof}
		Let $I = I(G)$ and $V(G)=[n]$. Note that we have 
		$I^{[k]}\subseteq I^{[k]} : I^{[2]}$. Now, we show $I^{[k]} : I^{[2]} \subseteq I^{[k]}$.  Let $u \in I^{[k]} : I^{[2]}$ be a squarefree monomial, and let $V \subseteq [n]$ be the support of $u$.  If $\nu(G_V) \geq 2$, then there is a squarefree monomial $w \in I^{[2]}$ which divides $u$.  Since $w u \in I^{[k]}$ and since $u = \sqrt{w  u}$, one has $u \in I^{[k]}$. Next, we show that $\nu(G_V)\geq 2$. 
		
		Suppose that $\nu(G_V) = 0$.  Thus $G_V$ possesses no edge of $G$.  Let $i \in V$.  Since $G$ has no isolated vertex, there is $e=\{i,\ell\} \in E(G)$.  Then $\nu(G_{V \cup \{\ell\}}) = 1$.  Since $\nu(G) \geq 3$, there is a $3$-matching $\{ f_1, f_2, f_3 \}$ of $G$.  Let, say, $e \cap f_1 = \emptyset$.  If $f_1 \cap (V \setminus e) = \emptyset$ and each of $j \in V \setminus e$ satisfies $N_G(j) \cap f_1 = \emptyset$, then set $f = f_1$.  If $f_1 \cap (V \setminus e) = \emptyset$ and if there is $j \in V \setminus e$ with $N_G(j) \cap f_1 \neq \emptyset$, then set $f = \{j\} \cup (N_G(j) \cap f_1)$.  If there is $j \in V \setminus e$ with $j \in f_1$, then set $f = f_1$.  It then follows that $\nu(G_{V \cup e \cup f}) = 2$.  Let $w$ be the squarefree monomial whose support is $e \cup f$.  Then $w \in I^{[2]}$.  Since the support of $u  w$ coincides with $V \cup e \cup f$ and since $k > 2$, it follows that $uw \not\in I^{[k]}$, and hence $u \not\in I^{[k]} : I^{[2]}$, a contradiction. Therefore, $\nu(G_V)\neq 0$. 
		
		On the other hand, if $\nu(G_V) = 1$, then the above technique with adding a suitable $f$ is valid and we deduce that $u \not\in I^{[k]} : I^{[2]}$, a contradiction. Thus, $\nu(G_V)\neq 1$.   
	\end{proof}

	A graph is called {\em equimatchable} if every maximal matching is maximum, (see~\cite[p.~102]{LP}).
	
	\begin{Lemma}
		\label{happy}
		Let $G$ be an equimatchable graph and $V \subseteq [n]$.  If $\nu(G_V) < \nu(G)$, then there is $e \in E(G)$ for which $\nu(G_{V \cup e}) = \nu(G_V) + 1$.
	\end{Lemma}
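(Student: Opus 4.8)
The plan is to build the required edge from a maximum matching of $G_V$, using equimatchability of $G$ to enlarge that matching inside $G$.

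First I would set $m=\nu(G_V)$, fix a maximum matching $M$ of $G_V$, and extend $M$ greedily to a maximal matching $M'$ of $G$. Since $G$ is equimatchable, $|M'|=\nu(G)>m$, so $M'\setminus M\neq\emptyset$; pick any edge $e=(ab)$ in it. Because $M\subseteq M'$ and $M'$ is a matching, $a$ and $b$ avoid every vertex covered by $M$. Moreover $a$ and $b$ cannot both lie in $V$: otherwise $e$ would be an edge of $G_V$ disjoint from the vertices covered by $M$, contradicting the maximality of $M$ in $G_V$. In any case $M\cup\{e\}$ is a matching of $G_{V\cup\{a,b\}}$ of size $m+1$, so $\nu(G_{V\cup e})\geq m+1$.

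If exactly one of $a,b$, say $a$, lies in $V$, then $V\cup\{a,b\}=V\cup\{b\}$ is obtained from $V$ by adjoining a single vertex, hence $\nu(G_{V\cup e})\leq\nu(G_V)+1=m+1$; combined with the previous inequality this gives $\nu(G_{V\cup e})=\nu(G_V)+1$, and $e$ is the desired edge.

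The main obstacle is the remaining case $a,b\notin V$, where a priori adjoining the two new vertices could raise the matching number by $2$. If $\nu(G_{V\cup e})=m+1$ we are again done. Otherwise $\nu(G_{V\cup e})=m+2$ (it cannot be larger, since only two vertices are added), and I would fix a matching $N$ of $G_{V\cup\{a,b\}}$ of this size. The edges of $N$ disjoint from $\{a,b\}$ form a matching of $G_V$, hence number at most $m$, so at least two edges of $N$ meet $\{a,b\}$; as $N$ is a matching this forces exactly two such edges, namely $(aa')$ and $(bb')$ with $a'\neq b'$ and $a',b'\in V$. Then $f=(aa')$ is an edge of $G$ with $V\cup f=V\cup\{a\}$, and $N\setminus\{(bb')\}$ is a matching of size $m+1$ all of whose vertices lie in $V\cup\{a\}$, since $b$ was covered in $N$ only by the deleted edge. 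Therefore $m+1\leq\nu(G_{V\cup f})\leq\nu(G_V)+1=m+1$, so $f$ is the desired edge. Throughout, the only inputs are that a maximal matching of an equimatchable graph is maximum and that adjoining one vertex to an induced subgraph raises the matching number by at most one.
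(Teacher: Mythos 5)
Your proof is correct. The opening move is the same as the paper's: take a maximum matching $M$ of $G_V$ and use equimatchability to produce one more edge $e=(ab)$ of $G$ disjoint from $M$, so that $\nu(G_{V\cup e})\geq \nu(G_V)+1$ and the only danger is that adjoining the two endpoints of $e$ raises the matching number by $2$. Where you diverge is in how that danger is neutralized. The paper argues in advance, via a case analysis on whether the endpoints of $e$ have neighbours in the set $V'$ of $V$-vertices left exposed by the chosen matching $\{e_1,\dots,e_k\}$: if some endpoint $j$ has a neighbour $\ell\in V'$ it replaces $e$ by $e'=(j\ell)$, which adds only the single vertex $j$ to $V$; otherwise it asserts directly that $\nu(G_{V\cup e})=\nu(G_V)+1$. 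You instead wait to see whether the jump by $2$ actually occurs, and if it does, you dissect a witness matching $N$ of size $m+2$: exactly two of its edges meet $\{a,b\}$, they must be of the form $(aa')$ and $(bb')$ with $a',b'\in V$, and discarding $(bb')$ leaves an $(m+1)$-matching inside $G_{V\cup\{a\}}$, so $f=(aa')$ is the required edge. The two fix-ups buy slightly different things: the paper's is shorter but its first case is stated relative to one particular maximum matching of $G_V$, whereas a hypothetical $(k+2)$-matching of $G_{V\cup e}$ would arise from some possibly different maximum matching of $G_V$; your witness-matching argument quantifies over exactly the right object and so is airtight at precisely the point where the paper is terse. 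Note also that equimatchability enters both proofs only through the single fact that a maximal matching containing $M$ has size $\nu(G)$, which you state explicitly.
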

	
	\begin{proof}
		Let $k = \nu(G_V)$, and let $\{e_1, \ldots, e_k\}$ be a maximum matching of $G_V$.  Since $G$ is equimatchable and $k < \nu(G)$, there is $e \in E(G)$ for which $\{e_1, \ldots, e_k, e\}$ is a $(k + 1)$-matching of $G$.  Let $V' = V \setminus (e_1\cup \cdots \cup e_k)$ and $e = \{i, j\}$.  If $N_G(i) \cap V' = \emptyset$ and $N_G(j) \cap V' = \emptyset$, then $\nu(G_{V \cup e}) = \nu(G_V) + 1$.  On the other hand, if, say, $N_G(j) \cap V' \neq \emptyset$ and $e' = \{j, \ell\} \in E(G)$, where $\ell \in V'$, then $\{e_1, \ldots, e_k, e'\}$ is a $(k + 1)$-matching of $G$ and $\nu(G_{V \cup e'}) = \nu(G_{V \cup \{j\}}) = \nu(G_V) + 1$, as desired.
	\end{proof}
	
	\begin{Corollary}
		\label{veryhappy}
		Let $G$ be an equimatchable graph and $V \subseteq [n]$.  If $\nu(G_V) < \nu(G)$, then there is a sequence of edges $e_1, \ldots, e_r$, where $r = \nu(G) - \nu(G_V)$, such that 
		\[
		\nu(G_{V \cup e_1 \cup \cdots \cup e_j}) = \nu(G_V) + j
		\]
		for $1 \leq j \leq r$.
	\end{Corollary}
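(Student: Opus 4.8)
The plan is to obtain this corollary by a straightforward induction on $r = \nu(G) - \nu(G_V)$, using Lemma~\ref{happy} as the single-step engine. The base case $r = 1$ is precisely the statement of Lemma~\ref{happy}: since $\nu(G_V) < \nu(G)$, there is an edge $e_1 \in E(G)$ with $\nu(G_{V \cup e_1}) = \nu(G_V) + 1$, which gives the required sequence of length one.

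For the inductive step, suppose $r \geq 2$ and that the conclusion holds for all pairs $(G, V')$ with $\nu(G) - \nu(G_{V'}) = r - 1$. Applying Lemma~\ref{happy} to $V$, we get an edge $e_1$ with $\nu(G_{V \cup e_1}) = \nu(G_V) + 1$. Set $W = V \cup e_1$. Then $\nu(G_W) = \nu(G_V) + 1 < \nu(G)$, since $r \geq 2$, so $\nu(G) - \nu(G_W) = r - 1$ and the induction hypothesis applies to $W$: there exist edges $e_2, \ldots, e_r$ with $\nu(G_{W \cup e_2 \cup \cdots \cup e_j}) = \nu(G_W) + (j - 1) = \nu(G_V) + j$ for $2 \leq j \leq r$. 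Since $W \cup e_2 \cup \cdots \cup e_j = V \cup e_1 \cup e_2 \cup \cdots \cup e_j$, the sequence $e_1, e_2, \ldots, e_r$ satisfies $\nu(G_{V \cup e_1 \cup \cdots \cup e_j}) = \nu(G_V) + j$ for all $1 \leq j \leq r$, as desired.

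There is essentially no genuine obstacle here: the only thing one must be careful about is checking at each stage that $\nu$ of the induced subgraph built so far is still strictly less than $\nu(G)$, so that Lemma~\ref{happy} may legitimately be reapplied; but this is automatic because at stage $j < r$ the accumulated value is $\nu(G_V) + j < \nu(G_V) + r = \nu(G)$. Thus the corollary is really just the transitive closure of Lemma~\ref{happy}, and the proof amounts to recording this iteration cleanly.
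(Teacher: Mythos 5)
Your proof is correct and is exactly the argument the paper intends: the corollary is stated without proof as an immediate iteration of Lemma~\ref{happy}, and your induction on $r$, with the observation that $\nu(G_V)+j<\nu(G)$ for $j<r$ keeps the lemma applicable at each stage, records that iteration correctly.
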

	
	\begin{Theorem}
		\label{ratliff}
		Suppose that $G$ is an equimatchable graph.  Let $k$ be an integer with $1 < k \leq \nu(G)$.  Then
		\[
		I(G)^{[k]} : I(G)^{[k-1]} = I(G)^{[k]}.
		\]
	\end{Theorem}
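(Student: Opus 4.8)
The plan is to follow the template of the proof of Theorem~\ref{easyratliff}, but to replace the ad hoc edge-addition used there by the systematic device of Lemma~\ref{happy} and Corollary~\ref{veryhappy}, which is exactly where the equimatchable hypothesis enters. Write $I=I(G)$. The inclusion $I^{[k]}\subseteq I^{[k]}:I^{[k-1]}$ is immediate. For the reverse inclusion, note (as in the proof of Theorem~\ref{Surprised}) that $I^{[k]}:I^{[k-1]}$ is again a squarefree monomial ideal, so it suffices to show that an arbitrary squarefree monomial $u\in I^{[k]}:I^{[k-1]}$ lies in $I^{[k]}$. Put $V=\supp(u)$; since a squarefree monomial $w$ lies in $I^{[k]}$ precisely when $\nu(G_{\supp(w)})\geq k$, I would assume for contradiction that $m:=\nu(G_V)\leq k-1$.

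The first step is to enlarge $V$ to a vertex set $V'\supseteq V$ with $\nu(G_{V'})=k-1$. Since $m\leq k-1<k\leq\nu(G)$, we have $m<\nu(G)$, so Corollary~\ref{veryhappy} furnishes edges $e_1,\dots,e_{k-1-m}$ with $\nu(G_{V\cup e_1\cup\cdots\cup e_j})=m+j$ for each relevant $j$; taking $V'=V\cup e_1\cup\cdots\cup e_{k-1-m}$ (and simply $V'=V$ in the degenerate case $m=k-1$, where nothing need be adjoined) gives $\nu(G_{V'})=k-1$. Next, let $M$ be a maximum matching of $G_{V'}$, so $|M|=k-1$; then $w:=u_M\in I^{[k-1]}$ and $\supp(w)\subseteq V'$.

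The contradiction is then immediate: by hypothesis $uw\in I^{[k]}$, and since $I^{[k]}$ is a squarefree monomial ideal also $\sqrt{uw}\in I^{[k]}$, whence $\nu\big(G_{\supp(\sqrt{uw})}\big)\geq k$; but $\supp(\sqrt{uw})=V\cup\supp(w)\subseteq V'$, and $\nu$ is monotone under passing to induced subgraphs on larger vertex sets, so $\nu\big(G_{\supp(\sqrt{uw})}\big)\leq\nu(G_{V'})=k-1<k$, a contradiction. Hence $\nu(G_V)\geq k$, i.e. $u\in I^{[k]}$, which gives $I^{[k]}:I^{[k-1]}\subseteq I^{[k]}$ and thus equality. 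I do not expect any genuine obstacle here: the substantive work is done by Lemma~\ref{happy} and Corollary~\ref{veryhappy} — controlling how $\nu(G_V)$ grows when a single edge is adjoined, which genuinely needs every maximal matching to be maximum — so the only points requiring care are the degenerate case $m=k-1$ and the preliminary remark that restricting to squarefree $u$ loses nothing.
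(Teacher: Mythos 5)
Your proposal is correct and follows essentially the same route as the paper: reduce to a squarefree monomial $u\notin I^{[k]}$, use Corollary~\ref{veryhappy} to enlarge $\supp(u)$ to a set $V'$ with $\nu(G_{V'})=k-1$, multiply by the monomial of a maximum matching of $G_{V'}$, and derive a contradiction from $\supp(uw)\subseteq V'$. The only differences are cosmetic (you make explicit the degenerate case $r=0$ and the monotonicity of $\nu$, which the paper leaves implicit).
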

	
	\begin{proof}
		Let $I = I(G)$.  One has $I^{[k]} \subseteq I^{[k]} : I^{[k-1]}$.  To see why $I^{[k]} : I^{[k-1]} \subseteq I^{[k]}$ is true, suppose that $I^{[k]} : I^{[k-1]} \not\subseteq I^{[k]}$.  Then there is a squarefree monomial $u \in I^{[k]} : I^{[k-1]}$ with $u \not\in I^{[k]}$.  Let $V \subseteq [n]$ be the support of $u$.  Since $u \not\in I^{[k]}$, it follows that $\nu(G_V) < k$.  Corollary \ref{veryhappy} then guarantees the existence of edges $e_1, \ldots, e_r$, where $r = (k - 1) - \nu(G_V)$, for which
		\[
		\nu(G_{V \cup e_1 \cup \cdots \cup e_r}) = k - 1
		\]
		Let $f_1, \ldots, f_{k-1}$ be a maximum matching of $G_{V \cup e_1 \cup \cdots \cup e_r}$ and $w \in I^{[k-1]}$ the squarefree monomial whose support is $f_1 \cup \cdots \cup f_{k-1}$.  Now, the support of the monomial $uw$ is a subset of $V \cup e_1 \cup \cdots \cup e_r$.  Hence $uw \not\in I^{[k]}$.  This contradicts the fact that $u \in I^{[k]} : I^{[k-1]}$.  Hence $I^{[k]} : I^{[k-1]} \subseteq I^{[k]}$, as required.
	\end{proof}
	
	\begin{Corollary}
		\label{againratliff}
		Suppose that $G$ is an equimatchable graph.  Let $k$ and $\ell$ be integers with $1 \leq \ell < k \leq \nu(G)$.  Then
		\[
		I(G)^{[k]} : I(G)^{[\ell]} = I(G)^{[k]}.
		\]
	\end{Corollary}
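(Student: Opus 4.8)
The plan is to obtain the general statement from the special case $\ell=k-1$, which is exactly Theorem~\ref{ratliff}, by a downward induction on $\ell$. The driving mechanism is the elementary observation already recorded in the paragraph preceding Theorem~\ref{easyratliff}: for squarefree powers one has $I(G)^{[\ell]}\subseteq I(G)^{[\ell-1]}$, because a squarefree monomial lying in $I(G)^{\ell}$ certainly lies in $I(G)^{\ell-1}$ and hence in $I(G)^{[\ell-1]}$. Forming colon ideals reverses inclusions, so
\[
I(G)^{[k]}\subseteq I(G)^{[k]}:I(G)^{[\ell-1]}\subseteq I(G)^{[k]}:I(G)^{[\ell]},
\]
and therefore the equality $I(G)^{[k]}:I(G)^{[\ell]}=I(G)^{[k]}$ immediately forces $I(G)^{[k]}:I(G)^{[\ell-1]}=I(G)^{[k]}$.

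Concretely, I would fix $k$ with $1<k\le\nu(G)$ and start from the base case: Theorem~\ref{ratliff} gives $I(G)^{[k]}:I(G)^{[k-1]}=I(G)^{[k]}$. Then, letting the second exponent decrease from $k-1$ down to the prescribed $\ell$, I would apply the displayed inclusion chain once at each step to pass from the assertion for exponent $j$ to the assertion for exponent $j-1$. After $k-1-\ell$ such steps this yields $I(G)^{[k]}:I(G)^{[\ell]}=I(G)^{[k]}$, which is the claim.

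I expect essentially no obstacle at the level of this corollary: all of the substantive content is already contained in Theorem~\ref{ratliff}, whose proof exploits equimatchability via Corollary~\ref{veryhappy} (the possibility of enlarging the matching number of an induced subgraph one edge at a time until it reaches $\nu(G)$). The only point meriting a line of care is the monotonicity $I(G)^{[\ell]}\subseteq I(G)^{[\ell-1]}$ and the resulting behaviour of colon ideals under this inclusion; once that is in place, the induction is routine and the corollary follows.
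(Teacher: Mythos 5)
Your proposal is correct and matches the paper's intended argument: the corollary is obtained from Theorem~\ref{ratliff} (the case $\ell=k-1$) together with the observation recorded just before Theorem~\ref{easyratliff}, namely that $I^{[k]}:I^{[\ell]}=I^{[k]}$ implies $I^{[k]}:I^{[\ell-1]}=I^{[k]}$ via $I^{[k]}\subseteq I^{[k]}:I^{[\ell-1]}\subseteq I^{[k]}:I^{[\ell]}$. Iterating this descent from $\ell=k-1$ down to the given $\ell$ is exactly the paper's route.
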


	\section*{Acknowledgment}
	
	The present paper was completed while the authors stayed at Mathematisches Forschungsinstitut in Oberwolfach, August~18 to September~7, 2019, in the frame of the Research in Pairs Program.  
	Nursel Erey was supported by T\"UB\.ITAK project no. 118C033.
	Takayuki Hibi was partially supported by JSPS KAKENHI 19H00637.  
	Sara Saeedi Madani was in part supported by a grant from IPM (No. 98130013).

\end{document}